\pgfplotsset{compat=1.15}
\newcommand{\ignore}[1]{}
\newtheorem{theorem}{Theorem}
\newtheorem{lemma}[theorem]{Lemma}
\newtheorem{proposition}[theorem]{Proposition}
\theoremstyle{definition}
\newtheorem{definition}[theorem]{Definition}
\newtheorem{remark}[theorem]{Remark}
\newtheorem{assumption}{Assumption}
\numberwithin{equation}{section}
\numberwithin{theorem}{section}
\newcommand{\m}{\mathbb}
\DeclareMathOperator*{\supp}{supp}
\DeclareMathOperator*{\val}{val}
\DeclareMathOperator*{\maxmin}{maxmin}
\newcommand\numberthis{\addtocounter{equation}{1}\tag{\theequation}}
\author{Bruno Ziliotto}
\title{Mertens conjectures in absorbing games with incomplete information}
\date{}
\begin{document}
\maketitle
\bibliographystyle{plain}
\begin{abstract}
In a zero-sum stochastic game with signals \cite[Chapter IV]{MSZ}, at each stage, two adversary players take decisions and receive a stage payoff determined by these decisions and a variable called \textit{state}. The state follows a Markov chain, that is controlled by both players. Actions and states are imperfectly observed by players, who receive a private signal at each stage. Mertens \cite{M86} conjectured two properties regarding games with long duration: first, that limit value always exists, second, that when Player 1 is more informed than Player 2, she can guarantee uniformly the limit value. These conjectures were disproved recently by the author \cite{Z13}, but remain widely open in many subclasses. A well-known particular subclass is the one of \textit{absorbing games with incomplete information on both sides}, in which the state can move at most once during the game, and players get a private signal about it at the outset of the game. This paper proves Mertens conjectures in this particular model, by introducing a new approximation technique of belief dynamics, that is likely to generalize to many other frameworks. In particular, this makes a significant step towards the understanding of the following broad question: in which games do Mertens conjectures hold? 
\end{abstract}
\section*{Introduction}
Discrete-time stochastic games describe repeated interactions between players in a changing environment, and are a widely studied subject in Game Theory \cite{A03,YZ14,JN16,JN16b,LS20}. The first model of this kind (\textit{standard stochastic games}) was introduced by Shapley \cite{SH53}. It features two players that take simultaneous decisions and receive opposite payoffs $g_m$ and $-g_m$ at each stage $m \geq 1$. Payoff $g_m$ is determined by these decisions and a variable called \textit{state of nature}. The state of nature is known and follows a Markov chain controlled by both players. State and action sets are finite. Zero-sum stochastic games can be viewed as a generalization of Markov chains (``0-Player case") and Markov Decision Processes \cite{Bellman_57} (``1-Player case"). 

In the \textit{$n$-stage game}, $n \geq 1$, the total payoff is the expected Ces\`aro mean of the stage payoffs $\frac{1}{n} \sum_{m=1}^n g_m$, and in the \textit{$\lambda$-discounted game}, the total payoff is the expected $\lambda$-Abel mean of the stage payoffs $\sum_{m \geq 1} \lambda(1-\lambda)^{m-1}g_m$. Maxmin and minmax coincide in the $n$-stage game, and they are called the \textit{value} of the game, denoted by $v_n$. Similarly, the value of the $\lambda$-discounted game is denoted by $v_\lambda$. Intuitively, the value corresponds to Player 1's payoff outcome when both players play rationally. Investigating properties of \textit{long} stochastic games have been a primary focus of literature. Formally, this corresponds to the asymptotic regime $n \rightarrow +\infty$ and $\lambda \rightarrow 0$. Bewley and Kohlberg \cite{BK76} have proved that $(v_n)$ and $(v_\lambda)$ converge to the same limit, called \textit{limit value}. Another related model where such a result has been proven true is the \textit{repeated game with incomplete information on both sides model} \cite{AM95, MZ71}, in which the state of nature never moves, and players get a single private signal about it at the outset of the game. 

When the limit value exists, a natural question is to ask for ``good" strategies that are robust with respect to the duration of the game (\textit{uniform approach}): does Player 1 (or Player 2) has a strategy that is approximately optimal in any $n$-stage game and $\lambda$-discounted game, provided that $n$ is large enough and $\lambda$ is small enough? Such a property holds in standard stochastic games \cite{MN81} and in repeated games with incomplete information on one side \cite{AM95}. These seminal results have inspired a huge amount of work on limit value and uniform approach in related models (see \cite{SZ16,JN16b,LS20} for recent surveys on the topic). A general model of stochastic game with signals was formulated by Mertens, Sorin and Zamir \cite[Chapter IV]{MSZ}, that includes most of the models studied in literature: in such a model, players may not know states and past actions, and receive private signals at every stage. Two influential conjectures were formulated by Mertens \cite{M86}: \textit{in any stochastic game with signals, the limit value exists, and moreover, when Player 1 is more informed than Player 2, she can guarantee uniformly the limit value.} These conjectures motivated a considerable literature and were proven true in numerous cases (see \cite{SZ16} for a recent survey). Nonetheless, the author disproved them recently \cite{Z13}, by providing a stochastic game with public signals on state without a limit value. Since then, positive results have been found in particular cases \cite{LV14,Li20,garrec19,LR20}, but the general question of characterizing stochastic games that satisfy Mertens conjectures remains largely uncharted. 

One particular case that has captured a lot of attention is the absorbing games with incomplete information on both sides class. An absorbing game is a stochastic game in which all states but one are \textit{absorbing}: once in an absorbing state, it stays there, irrespective of players' actions. An absorbing game with incomplete information on both sides is an absorbing game that depends on a fixed parameter that is unknown to players, and such that at the outset of the game, players get one single private signal on this parameter. In the complete information case (both players know the parameter), Mertens conjectures hold thanks to Kohlberg \cite{kohlberg1974}, and in the one-sided case (Player 1 knows the parameter), they hold in ``generalized Big Match" \cite{S841,S852,Li20}, that extend on the classic Big Match game \cite{BF68}. In the general one-sided case, Rosenberg \cite{rosenberg00} proved existence of a limit value. In an unpublished working paper, Laraki wrote an incomplete proof of existence of the limit value in the general two-sided case. To the best of our knowledge, this proof has not been corrected, and is independent of this paper.  

This paper proves Mertens conjectures in the general model of absorbing games with incomplete information on both sides. In addition to solving a well-known model in literature, it provides a new methodological approach for adressing Mertens conjectures in stochastic games. 
Indeed, the proof relies on the approximation of the original game by an auxiliary stochastic game with finite state space and compact action sets, in which states correspond to beliefs and are observed. Such a stochastic game has \textit{semi-algebraic separable transitions}, hence satisfies the Mertens conjectures \cite{BGV13}. The main difficulty is that this approximation has to be uniform in time, meaning that state dynamics in the original game and in the auxiliary game have to be close to each other in some sense, \textit{at any stage}. Even in the 0-Player case, such a property would already be very demanding, since the law of the infinite state sequence generated by a Markov chain is usually not robust to perturbations of the kernel. Hence, this approximation has to be done in a precise and game-adapted way, and consists in splitting at each stage the belief of players in the original game into vertices of a triangulation of the belief simplices. A coupling argument then shows that, in rough terms, the error between the original game and the auxiliary game
is bounded by a quantity that is small with respect to the $L^1$-variation of belief martingales of players. Then, the problem reduces to finding approximately optimal strategies that generate belief martingales with bounded $L^1$-variation. This point turns out to be quite delicate, and requires the introduction of a new class of strategies, called \textit{concise strategies}.

 Interestingly, most tools and lemmas introduced in the proof do not depend on the fact that the game is absorbing. That is why this new methodology is likely to be extended to many other frameworks. Building on this fact, the author proposes a new broad class of stochastic games with signals that is conjectured to satisfy Mertens statements.

The paper is organized as follows. Section \ref{sec:model} presents the model, the main results and its relation to Mertens conjectures, and provides a sketch of proof. Section \ref{sec:auxiliary} presents the auxiliary stochastic game. Section \ref{sec:concise} proves existence of approximately optimal strategies in the discounted game, that generate belief martingales with bounded variation. Section \ref{sec:coupling} formalizes the coupling argument and proves existence of the limit value. Section \ref{sec:uniform} proves the second Mertens conjecture. Section \ref{sec:perspectives} discusses possible extensions. 
\subsection*{Notations}
Throughout the paper, $\m{N}$ designates the set of nonnegative integers, and $\m{N}^*:=\m{N} \setminus \left\{0\right\}$. The set of real numbers is denoted by $\m{R}$. For $a,b \in \m{N}$, the notation $[a\dots b]$ stands for the set of integers larger or equal to $a$ and smaller or equal to $b$. 

When $C$ is a compact subset of a finite dimensional space, it will always be equipped with its Borelian $\sigma$-algebra, and the notation $\Delta(C)$ designates the set of probability measures over $C$. The set $\Delta(C)$ will be equipped with the Kantorovich-Rubinstein distance, which makes it a compact set. For $c\in C$, $\delta_c \in \Delta(C)$ designates the Dirac measure at $c$. The support of a probability measure $\mu \in \Delta(C)$ is denoted by $\supp(\mu)$. 

\section{Model and results} \label{sec:model}
\subsection{Stochastic games with incomplete information on both sides} \label{subsec:model}
A stochastic game with incomplete information on both sides is a tuple $\Gamma=(K,L,\Omega,I,J,\rho,g)$, where $K$ is Player 1's type space, 
$L$ is Player 2's type space, $\Omega$ is the state space, $I$ is Player 1's action set, $J$ is Player 2's action set, $\rho : \Omega \times I \times J \rightarrow \Delta(\Omega)$ is the transition function, $g: K \times L \times \Omega \times I \times J \rightarrow \m{R}$ is the payoff function. The sets $K,L,\Omega,I$ and $J$ are assumed to be finite. 

A state $\omega \in \Omega$ is \textit{absorbing} if for all $(k,\ell) \in K \times L$, $(i,j) \rightarrow g(k,\ell,\omega,i,j)$ is constant, and for all $(i,j) \in I \times J$, $\rho(\omega|\omega,i,j)=1$, and is \textit{non-absorbing} otherwise. 
When only one state is non-absorbing, the game is called \textit{absorbing game with incomplete information on both sides}, and this state is denoted by $\omega^0$. In the even more particular case where $L$ is a singleton, the game is called \textit{absorbing game with incomplete information on one side}. 
\\

Let $\Gamma=(K,L,\Omega,I,J,\rho,g)$ be a stochastic game with incomplete information on both sides. 
Given a pair of priors $(p,q) \in \Delta(K) \times \Delta(L)$ on types, and an initial state $\omega_1$, the game proceeds as follows: 
\begin{itemize}
\item
Player 1's type $k$ is drawn according to $p$, and Player 2's type $\ell$ is drawn independently according to $q$. Player 1 is informed of her type $k$, and Player 2 is informed of his type $\ell$.
\item
At each stage $m \geq 1$, simultaneously, Player 1 chooses $i_m \in I $ and Player 2 chooses $j_m \in J$. The stage payoff is $g(k,\ell,\omega_m,i_m,j_m)$, meaning that Player 1 receives $g(k,\ell,\omega_m,i_m,j_m)$, and Player 2 receives $-g(k,\ell,\omega_m,i_m,j_m)$. 
\item
A new state $\omega_{m+1}$ is drawn according to $\rho(\omega_{m},i_m,j_m)$, and $(i_m,j_m,\omega_{m+1})$ is announced to the players.
\end{itemize} 
A behavior strategy (resp., pure strategy) for Player 1 is a mapping 
\\
$\sigma: \cup_{m \geq 1} K \times (\Omega \times I \times J)^{m-1} \times \Omega \rightarrow \Delta(I)$ 
(resp., $\sigma: \cup_{m \geq 1} K \times (\Omega \times I \times J)^{m-1} \times \Omega \rightarrow I$). 
The interpretation of a strategy $\sigma$ is that at the beginning of stage $m \geq 1$, Player 1 knows her type $k$, as well as the sequence of past states and actions $(\omega_1,i_1,j_1,\dots,\omega_{m-1},i_{m-1},j_{m-1})$ and the current state $\omega_m$. Then, Player 1 draws an action according to the distribution $\sigma(k,\omega_1,i_1,j_1,\dots,\omega_{m-1},i_{m-1},j_{m-1},\omega_m)$. 

A behavior strategy (resp., pure strategy) for Player 2 is a mapping $\tau: \cup_{m \geq 1} L \times (\Omega \times I \times J)^{m-1} \times \Omega \rightarrow \Delta(J)$ 
(resp., $\tau: \cup_{m \geq 1} L \times (\Omega \times I \times J)^{m-1} \times \Omega \rightarrow J$). The set of behavior strategies of Player 1 (resp., Player 2) is denoted by $\Sigma$ (resp., $T$). 
A tuple $(p,q,\omega,\sigma,\tau) \in \Delta(K) \times \Delta(L) \times \Omega \times \Sigma \times T$ induces a probability measure $\m{P}_{p,q,\omega,\sigma,\tau}$ on the set of \textit{infinite histories} of the game
$K \times L \times (\Omega \times I \times J)^{\m{N}}$, and the expectation with respect to this probability measure is denoted by $\m{E}_{p,q,\omega,\sigma,\tau}$. 
Given $\lambda \in (0,1]$, the $\lambda$-discounted game $\Gamma_{\lambda}(p,q,\omega)$ is the game with payoff 
\begin{equation*}
\gamma_\lambda(p,q,\omega,\sigma,\tau):=\m{E}_{p,q,\omega,\sigma,\tau} \left( \sum_{m \geq 1} \lambda(1-\lambda)^{m-1} g(k,\ell,\omega_m,i_m,j_m) \right).
\end{equation*}
The value of this game exists \cite[Section IV.1.c, p. 174]{MSZ}, and is denoted by $v_{\lambda}(p,q,\omega)$:
\begin{equation*}
v_{\lambda}(p,q,\omega):=\max_{\sigma \in \Sigma} \min_{\tau \in T} \gamma_\lambda(p,q,\omega,\sigma,\tau)=\min_{\tau \in T} \max_{\sigma \in \Sigma} \gamma_\lambda(p,q,\omega,\sigma,\tau).
\end{equation*}
Denote $\left\|.\right\|_1$ the 1-norm, $\left\|.\right\|_2$ the 2-norm, and $\left\|.\right\|_\infty$ the uniform norm. 
When $\Delta(K)$ and $\Delta(L)$ are equipped with $\left\|.\right\|_1$, $v_\lambda$ is $\left\|g\right\|_{\infty}$-Lipschitz with respect to variables $p$ and $q$, concave with respect to $p$, and convex with respect to $q$.  

Given $n \in \m{N}^*$, the $n$-stage game $\Gamma_n(p,q,\omega)$ is the game with payoff 
\begin{equation*}
\gamma_n(p,q,\omega,\sigma,\tau):=\m{E}_{p,q,\omega,\sigma,\tau} \left(\frac{1}{n} \sum_{m = 1}^n g(k,\ell,\omega_m,i_m,j_m) \right).
\end{equation*}
The value of this game exists \cite[Section IV.1.c, p. 174]{MSZ}, and is denoted by $v_n(p,q,\omega)$:
\begin{equation*}
v_{n}(p,q,\omega):=\max_{\sigma \in \Sigma} \min_{\tau \in T} \gamma_n(p,q,\omega,\sigma,\tau)=\min_{\tau \in T} \max_{\sigma \in \Sigma} \gamma_n(p,q,\omega,\sigma,\tau).
\end{equation*}
\subsection{Results}
The two results of this paper are the following:
\begin{theorem} \label{main_thm}
In any absorbing game with incomplete information on both sides, $(v_\lambda)$ and $(v_n)$ converge uniformly to the same limit. 
\end{theorem}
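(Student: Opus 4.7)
My plan is to reduce Theorem \ref{main_thm} to a stochastic game with finite observed state and compact action sets, to which the Bolte--Gaubert--Vigeral result \cite{BGV13} for semi-algebraic separable transitions applies, and then to transfer the conclusion back to $\Gamma$ by a stage-by-stage coupling. Fix a discretization parameter $\varepsilon>0$ and a triangulation of each belief simplex $\Delta(K)$, $\Delta(L)$ of mesh at most $\varepsilon$. I would introduce an auxiliary game $\widetilde{\Gamma}_\varepsilon$ whose states are pairs of vertices of these triangulations, together with absorbing states that record the payoff committed after leaving $\omega^0$, and whose actions at each state are pairs: a mixed action in $\Delta(I)$ or $\Delta(J)$, together with a mass-preserving splitting of the current vertex into neighbouring vertices consistent with the Bayesian posterior induced by that mixed action. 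Payoffs and transitions become polynomial in these compact actions, so $\widetilde{\Gamma}_\varepsilon$ has semi-algebraic separable transitions, hence a uniform value $\widetilde{v}_\varepsilon$.

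Next I would build a coupling between the two games. Given any pair $(\sigma,\tau)$ in $\Gamma$, the posteriors $(p_m,q_m)$ on types form martingales under $\m{P}_{p,q,\omega,\sigma,\tau}$; projecting them stage by stage onto the triangulation vertices via the splitting lemma produces an admissible play in $\widetilde{\Gamma}_\varepsilon$, and conversely any play in $\widetilde{\Gamma}_\varepsilon$ lifts by further randomization to a play in $\Gamma$. Standard coupling estimates would give, at stage $m$, an error of order $\varepsilon$ plus a term proportional to $\|p_{m+1}-p_m\|_1+\|q_{m+1}-q_m\|_1$. After $\lambda$-discounting, the discrepancy between $v_\lambda$ and $\widetilde{v}_\varepsilon$ is bounded by $C\varepsilon$ plus $C$ times the expected $L^1$-variation of the belief martingales under the strategies used.

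The main obstacle is that this bound is empty unless the $L^1$-variation stays bounded as $\lambda\to 0$, whereas nothing a priori prevents a player from leaking information indefinitely. I would therefore introduce a class of \emph{concise strategies}, in which the informed player only splits her belief at a bounded number of revelation stages and plays non-revealingly in between. The delicate lemma to prove is that every $\varepsilon$-optimal strategy in $\Gamma_\lambda$ can be replaced by a concise strategy losing at most $O(\varepsilon)$ in expected discounted payoff, with total $L^1$-variation of the induced belief martingale uniformly bounded in $\lambda$. The absorbing structure is what should make this possible: once the play leaves $\omega^0$, neither player gains further information, so revelations only matter for the behaviour in $\omega^0$, and a finite splitting schedule per simplex cell should suffice.

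Combining these three ingredients, for each $\varepsilon>0$ one obtains $|v_\lambda-\widetilde{v}_\varepsilon|\le C\varepsilon$ for $\lambda$ small enough, so $(v_\lambda)$ is Cauchy and converges to some $v^*=\lim_\varepsilon \widetilde{v}_\varepsilon$. The same coupling applied to the $n$-stage game, together with a Tauberian argument, gives convergence of $(v_n)$ to the same $v^*$; uniformity in $(p,q,\omega)$ follows because all the constructions above are Lipschitz in the initial data. The hardest step by far will be the concise-strategy approximation, since the revelation schedule has to remain approximately optimal against every response of the opponent, not merely in expectation against one equilibrium reply.
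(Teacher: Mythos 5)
Your overall architecture does match the paper's (auxiliary game on triangulation vertices, the semi-algebraic separable result of \cite{BGV13}, a coupling whose error is controlled through the $L^1$-variation of the belief martingales, and a bounded-variation class of strategies), but two central quantitative steps have genuine gaps. First, your error accounting in the coupling does not close: you bound the discrepancy between $v_\lambda$ and $\widetilde v_\varepsilon$ by ``$C\varepsilon$ plus $C$ times the expected $L^1$-variation''. Since the variation is only claimed to be bounded \emph{uniformly in $\lambda$} (and in fact the achievable bound blows up as $\varepsilon\to 0$, of order $\varepsilon^{-5}$ in the paper), this additive bound does not vanish and proves nothing. What makes the discretization work is that the splitting onto the vertices has conditional expectation equal to the true posterior, so the accumulated discretization error is itself a martingale whose increments have conditional second moment of order $(\text{stepsize})\times\|p_{m+1}-p_m\|$; by orthogonality the total error is of order $\sqrt{\alpha\cdot(\text{variation bound})}$, and the mesh $\alpha$ must then be taken as a high power of $\varepsilon$ (the paper uses $\alpha\sim\varepsilon^{11}$ against a variation bound $\sim\varepsilon^{-5}$, see \eqref{choice_alpha} and \eqref{bound_variance}). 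Without this zero-mean structure and this hierarchy between mesh and variation, the per-stage errors accumulate linearly in the number of stages and the approximation is lost as $\lambda\to 0$.

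Second, the concise-strategy step as you describe it is precisely the naive version that fails. Requiring the informed player to ``split at a bounded number of revelation stages and play non-revealingly in between'' is the restriction the paper explicitly discards: mixed actions that are exactly type-independent on their non-revealing part may all be significantly suboptimal, so no $O(\varepsilon)$-optimal strategy of that form need exist. The workable notion (Definition \ref{def_concise}) is conditional — conditional on the realized action being non-revealing, its law does not depend on the type — and the resulting variation bound (Proposition \ref{prop:bounded_var}) holds only up to the first time the belief approaches the boundary of $\Delta(K)$, not globally. This forces two ingredients absent from your plan: an induction on the support of $p$ to handle play after the belief reaches the frontier, and a Lipschitz-type control of the auxiliary value near the frontier, which requires adding quitting actions (Assumption \ref{asslip}, Proposition \ref{lipf}). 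Moreover, your justification that ``once the play leaves $\omega^0$, revelations only matter in $\omega^0$'' understates where the absorbing hypothesis is really used: the proof that the conciseness surgery loses only $O(\varepsilon)$ must control the \emph{correlation} between absorption and the posterior belief, which is exactly why the $\varepsilon$-convexification carries the transport-type Property \ref{opt_transport} and why Lemma \ref{lemma:future_payoff} and Proposition \ref{opt_concise} are needed; a stage-by-stage concavity argument of the kind you sketch is not sufficient.
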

\begin{remark} \label{rem:tauberian}
The author \cite{Z15} proved that in the general framework of \textit{stochastic games with signals}, which includes the model studied in this paper, $(v_\lambda)$ converges uniformly if and only if $(v_n)$ converges uniformly, and that both limits coincide. In this paper, we prove that $(v_\lambda)$ converges uniformly, which is thus enough to prove the above theorem. Recall that the common limit is called \textit{limit value}. 
\end{remark}
\begin{theorem} \label{main_thm_2}
Consider an absorbing game with incomplete information on one side, with limit value $v^*$. For any $(p,\omega) \in \Delta(K) \times \Omega$, for any $\varepsilon>0$, there exists $\sigma \in \Sigma$, $n_0 \geq 1$ and $\lambda_0 \in (0,1]$ such that for any $n \geq n_0$ and $\lambda \in (0,\lambda_0]$, for any $\tau \in T$,
\begin{equation*}
\gamma_n(p,\omega,\sigma,\tau) \geq v^*(p,\omega)-\varepsilon \quad \text{and} \quad \gamma_\lambda(p,\omega,\sigma,\tau) \geq v^*(p,\omega)-\varepsilon. 
\end{equation*}

\end{theorem}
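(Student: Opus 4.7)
The plan is to deduce Theorem~\ref{main_thm_2} by combining three ingredients developed in the earlier sections for Theorem~\ref{main_thm}: (i)~the approximation of $\Gamma$ by the auxiliary stochastic game $\tilde\Gamma$ with finite \emph{observable} state space, compact action sets and semi-algebraic separable transitions from Section~\ref{sec:auxiliary}; (ii)~the concise strategies of Section~\ref{sec:concise}; and (iii)~the coupling estimate of Section~\ref{sec:coupling}. The additional input is the theorem of Bolte--Gaubert--Vigeral~\cite{BGV13}, which states that in any stochastic game with finite observed state space, compact actions and semi-algebraic separable transitions, both Mertens conjectures hold; in particular either player can \emph{uniformly} guarantee the limit value of $\tilde\Gamma$.

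First, I would fix $(p,\omega)\in\Delta(K)\times\Omega$ and $\varepsilon>0$, set $\eta:=\varepsilon/(3\|g\|_\infty+3)$, and choose the triangulation mesh of Section~\ref{sec:auxiliary} fine enough that the coupling estimate of Section~\ref{sec:coupling} bounds the payoff gap between $\Gamma$ and $\tilde\Gamma$ by $\eta$ times the $L^1$-variation of the belief martingale, uniformly over all strategy pairs. Let $\tilde v^*$ be the limit value of $\tilde\Gamma$ from the vertex encoding $(p,\omega)$; the same coupling argument used in Section~\ref{sec:coupling} to prove Theorem~\ref{main_thm} yields $|\tilde v^*-v^*(p,\omega)|\leq\varepsilon/3$.

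Next, \cite{BGV13} produces a behavior strategy $\tilde\sigma$ of Player~1 in $\tilde\Gamma$ together with thresholds $\tilde n_0,\tilde\lambda_0$ such that $\tilde\sigma$ guarantees $\tilde v^*-\varepsilon/3$ uniformly in $\tilde n\geq\tilde n_0$ and $\tilde\lambda\leq\tilde\lambda_0$. I would then lift $\tilde\sigma$ to $\sigma\in\Sigma$ in $\Gamma$ via the splitting mechanism of Section~\ref{sec:concise}: at every stage, Player~1 decomposes her current posterior on $K$ as a convex combination of triangulation vertices, randomizes to commit to one of them, and then plays the action prescribed by $\tilde\sigma$ at that vertex. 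Because in the one-sided setting only Player~1 updates the posterior---Player~2 has no private information---this construction is entirely controlled by Player~1, and a straightforward adaptation of the concise-strategy lemma in Section~\ref{sec:concise} shows that the induced belief martingale has $L^1$-variation bounded by a constant $V_0$ depending only on the triangulation and on $\card(K)$, independently of the horizon and of the opponent $\tau\in T$.

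Finally, putting the three estimates together gives, for every $\tau\in T$ and every $n\geq\tilde n_0$, $\lambda\in(0,\tilde\lambda_0]$,
\[
\gamma_\lambda(p,\omega,\sigma,\tau)\ \geq\ \tilde v^*-\tfrac{\varepsilon}{3}-\eta V_0\ \geq\ v^*(p,\omega)-\varepsilon,
\]
and likewise for $\gamma_n$, yielding Theorem~\ref{main_thm_2}. The main obstacle I expect is the third step: the concise-strategy construction of Section~\ref{sec:concise} was engineered for a single discounted horizon, whereas the lift here must be horizon-free \emph{and} must preserve the uniform guarantee of $\tilde\sigma$ against every Player~2 strategy. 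This forces the splitting recipe to be essentially stationary in the triangulation state, and it requires a careful argument to show that Player~2's best responses to $\sigma$ in $\Gamma$ translate, via the coupling, into admissible responses against $\tilde\sigma$ in $\tilde\Gamma$, so that the BGV13 uniform bound applies on the coupled side.
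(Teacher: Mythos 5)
Your overall route is the paper's own: pass to the finite-state auxiliary game of Section \ref{subsec:gamma_f}, invoke the uniform value of \cite{BGV13} there, and lift the uniformly optimal strategy back into $\Gamma$ by splitting the posterior onto the vertices of the triangulation. Two of your steps, however, do not work as stated. The first is the error accounting. You charge the lifted strategy an error $\eta V_0$, where $V_0$ is a horizon-free bound on the $L^1$-variation of the induced belief martingale, and you assert that a ``straightforward adaptation of the concise-strategy lemma'' provides $V_0$. It does not: Proposition \ref{prop:opt_concise} constructs \emph{special} ($\varepsilon$-concise) discounted-optimal strategies, and the variation bound of Proposition \ref{variation_1_e} holds only for such strategies, and only up to the exit time from $\Delta(K)\setminus F_{\varepsilon}$; a strategy lifted from an arbitrary uniformly optimal strategy of the auxiliary game has no reason to be concise, and a bounded belief martingale has in general infinite $L^1$-variation over an infinite horizon. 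The point you are missing is that in this direction no variation-dependent error is needed at all: the splitting is mean-preserving, after splitting the belief is exactly a vertex, and the stage payoff is affine in the belief, so Player 1 can reproduce the auxiliary dynamics \emph{exactly} in $\Gamma$ (she performs the splitting ``in her head''), and the coupled payoffs coincide identically, with no $\eta V_0$ term. The concise-strategy and coupling machinery enters Theorem \ref{main_thm_2} only through the value comparison $|v^f_\lambda-v^e_\lambda|\leq\varepsilon$, which is exactly Proposition \ref{comparison_e_f} (already proved, and requiring Assumption \ref{asslip}, taken w.l.o.g.); you should quote it rather than re-derive a coupling estimate for the lifted strategy.

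The second issue is the admissibility problem you flag at the end but leave unresolved, and it is a genuine obstruction: in $\Gamma^f$ the players observe past \emph{mixed} actions $y\in Y$ of Player 2, whereas in $\Gamma$ Player 1 observes only realized pure actions $j\in J$; hence a strategy of $\Gamma^f$ cannot be fed with the histories available in $\Gamma$, and a reply of Player 2 in $\Gamma$ does not directly induce an admissible strategy in $\Gamma^f$. The paper removes this by introducing in Section \ref{sec:uniform} the game $\Gamma^\varphi$, identical to $\Gamma^f$ except that Player 2's action set is $J$: since the state is perfectly observed, this change of monitoring does not affect the discounted values, so $\Gamma^\varphi$ has the same limit value $w^*=\lim_{\lambda\to 0}v^f_\lambda$ and \cite{BGV13} yields a uniform $\varepsilon$-optimal strategy there, which is then lifted by the exact coupling described above. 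With this modification (and the final Lipschitz step handling priors $p\notin P$), your sketch becomes the paper's proof; without it, the lift is not even well defined.
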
 

\subsection{Stochastic games with long duration and the Mertens conjectures} \label{subsec:mertens}
Let us explain how the results stated in the previous subsection relate to two influential conjectures regarding stochastic games with long duration. 
Mertens, Sorin and Zamir \cite[Chapter IV]{MSZ} have introduced a general model of stochastic game with signals, that we recall briefly. Such a game is described by a finite state space $S$, a finite action set $I$ (resp., $J$) for Player 1 (resp., Player 2), a finite signal set $A$ (resp., $B$) for Player 1 (resp., Player 2), a transition function $\rho:S \times I \times J \rightarrow \Delta(S \times A \times B)$, and a payoff function $g:S \times I \times J \rightarrow \m{R}$. 
Given a prior $p \in \Delta(S \times A \times B)$, the game proceeds as follows: at the outset of the game, a tuple $(s_1,a_1,b_1)$ is drawn according to $p$, and Player 1 (resp., Player 2) is informed of $a_1$ (resp, $b_1$). 
Then, at each stage $m \geq 1$, each player chooses simultaneously some action, denoted by $i_m$ for Player 1 and $j_m$ for Player 2. Stage payoff is $g(s_m,i_m,j_m)$, and a tuple $(s_{m+1},a_{m+1},b_{m+1})$ is selected according to $\rho(s_m,i_m,j_m)$. Player 1 (resp., Player 2) is informed of $a_{m+1}$ (resp., $b_{m+1}$). Strategies, $\lambda$-discounted game, $n$-stage game, $\lambda$-discounted values and $n$-stage values are defined analogously as in Section \ref{subsec:model}. 
\begin{definition}
A stochastic game with signals has a \textit{limit value} if $(v_\lambda)$ and $(v_n)$ converge to the same limit. 
\end{definition}
By definition, when $\lambda$ is small, or $n$ is large, Player 1 can guarantee a quantity close to the limit value. Nonetheless, she may need to know the exact value of $\lambda$ or the exact value of $n$, since optimal strategies may depend in a very sensitive way on these parameters. This motivates the following definition: 
\begin{definition}
Player 1 can \textit{guarantee uniformly} $w$ in $\Gamma(p,q,\omega)$ if for any $\varepsilon>0$, there exists $\sigma \in \Sigma$, $n_0 \geq 1$ and $\lambda_0 \in (0,1]$ such that for any $n \geq n_0$, for any $\lambda \in (0,\lambda_0]$, for any $\tau \in T$,
\begin{equation*}
\gamma_n(p,q,\omega,\sigma,\tau) \geq w-\varepsilon \quad \text{and} \quad \gamma_\lambda(p,q,\omega,\sigma,\tau) \geq w-\varepsilon. 
\end{equation*}
The strategy $\sigma$ is called \textit{$\varepsilon$-uniform optimal strategy}. 
 
The \textit{uniform maxmin} is 
\begin{equation*}
\maxmin(p,q,\omega):=\sup \left\{w \ | \ \text{Player 1 can guarantee uniformly $w$ in $\Gamma(p,q,\omega)$} \right\}. 
\end{equation*}
\end{definition}
Note that when the limit value exists, it follows from the definition that the maxmin is always smaller than the limit value. 
Mertens \cite{M86} conjectured that any stochastic game with signals should have a limit value, and moreover, that when Player 1 is more informed than Player 2, the uniform maxmin and the limit value coincide. By definition, the latter point is equivalent to saying that Player 1 can guarantee uniformly the limit value. 

These conjectures have been proven true in numerous subclasses (see \cite{SZ16} for a survey), but were disproved recently by the author \cite{Z13}.  Despite this counterexample, the question of which stochastic games with signals subclasses satisfy Mertens conjectures remain largely unanswered, and is an active field of research (see e.g. \cite{SV15b,LR20,garrec19}). 
Theorems \ref{main_thm} and \ref{main_thm_2} can now be recasted as follows: 
\begin{center}
\textit{Mertens conjectures hold in absorbing games with incomplete information on both sides}.
\end{center} 
This result and its proof inspire the following new conjecture: 
\\

\emph{
\underline{New conjecture}: Consider a stochastic game with signals such that the state space $S$ can be decomposed as a product $K \times \Omega$, where: 
\begin{itemize}
\item
The state component on $\Omega$ is perfectly observed by each player: formally, for any pair of strategies, for each $m$,
denoting $s_{m}=(k_{m},\omega_{m})$ the state at stage $m$, $\omega_m$ is measurable with respect to $a_m$ and $b_m$. 
\item
for any pair of strategies, for each $m \geq 1$, the following property holds almost surely: 
if $m$ is such that $k_m \neq k_{m+1}$, then for all $m' \geq m+1$, $k_{m'} \neq k_m$. 
\end{itemize}
Then the game satisfies Mertens conjectures: the game has a limit value, and when Player 1 is more informed than Player 2, it is equal to the uniform maxmin.
}
\\

There are several motivations for this statement. First, many subclasses where Mertens conjectures hold satisfy such properties: to name a few, standard stochastic games \cite{BK76,MN81}, repeated games with incomplete information on both sides \cite{MZ71}, repeated games with incomplete information \cite{F82}, absorbing games with a signalling structure \cite{coulomb01}, the model studied in this paper... 

Second, following the counterexample \cite{Z13}, the following informal idea has emerged in the literature \cite{SV15b,LR20}: games satisfying Mertens conjectures feature an \textit{irreversible} property, either on the dynamics, or on the information. The second property of the conjecture stated above has this flavor: the $K$-component has an ``irreversible'' dynamics, in the sense that either it stays where it is, or moves and never goes back. 

Last, the proof approach used in this paper relies heavily on the type of properties stated in the conjecture, and is a good candidate for handling it. We will elaborate on this point in Section \ref{sec:perspectives}. 
We conclude by the following remark:
\begin{remark}
In literature, the definition of uniform maxmin often requires in addition that Player 2 should be able to \textit{defend uniformly} the maxmin, that is:
\begin{eqnarray*}
&\forall\varepsilon>0,& \forall \sigma \in \Sigma, \exists \tau \in T, \ \exists n_0 \geq 1, \forall n \geq n_0, 
\gamma_n(p,q,\omega,\sigma,\tau) \leq \maxmin(p,q,\omega)+\varepsilon 
\\
&\text{and}& \quad \gamma_\lambda(p,q,\omega,\sigma,\tau) \leq \maxmin(p,q,\omega)+\varepsilon.
\end{eqnarray*}
We elaborate on this point in Section \ref{subsec:uniform_maxmin}. 
\end{remark}

\subsection{Organization and insights of the proof}
The proof of Theorem \ref{main_thm} is involved, and divides into three main parts, that correspond to Sections \ref{sec:auxiliary}, \ref{sec:concise} and \ref{sec:coupling}. 

 \paragraph{Section \ref{sec:auxiliary}}Given an absorbing game with incomplete information on both sides, an auxiliary stochastic game with the same discounted values is constructed, where the state corresponds to the triple (belief on Player 1's type, belief on Player 2's type, original state), and action sets are mappings from type sets to original mixed actions. Auxiliary states and actions are perfectly observed by players. 
 State space and actions sets are compact, but stochastic games with compact state space may not have a limit value, even under strong assumptions \cite{Z13}. Hence, 
the idea is to approximate the belief variables (belief on Player 1's type, belief on Player 2's type) by a finitely-valued process, to obtain a stochastic game with finite state space and compact actions sets, where state and actions are still observed. Still, this type of stochastic game may not have a limit value (the first example of this kind is by \cite{vigeral13}, and other examples are available in \cite{Z13,SV15b}). Fortunately, the stochastic game that we obtain is very regular (\textit{semi-algebraic separable stochastic game}), and existence of the limit value holds by \cite{BGV13}. It remains to prove that discounted values of this latter game are close to the original game. This is in fact the main difficulty of the proof, and the object of Sections \ref{sec:concise} and \ref{sec:coupling}. 
\paragraph{Section \ref{sec:concise}}
 Discretizing a compact state space into a finite one is a natural idea, but in the framework of stochastic games, this seldom works (see \cite{NS98,geitner02,V10,MV20} for a few exceptions). Indeed, small errors on the transition function typically propagate when the number of repetitions become large, and makes true state dynamics and approximated state dynamics diverge. A crucial aspect of the discretization made in this paper is that the expectation of the error increment between the true and approximated belief dynamics is 0. As a consequence, the error term between the true dynamics and the approximated dynamics can be bounded by a term that is small compared to the $L^1$-variation of the true belief process. Even though the belief process is a martingale, this $L^1$-variation may not be bounded. Hence, the object of Section \ref{sec:concise} is to prove existence of approximately optimal strategies that generate belief processes with bounded $L^1$-variation (\textit{concise strategies}). 
 \paragraph{Section \ref{sec:coupling}} 
The last part studies the belief dynamics generated by concise strategies, both in the original game and in the approximated game. Using a coupling argument, it shows that both dynamics remain close to each other, and concludes on the proof of Theorem \ref{main_thm}. 
\\

Given Theorem \ref{main_thm}, the proof of Theorem \ref{main_thm_2} is rather simple, and relies again on a coupling argument between the original game and the approximated game. 
\\

Since many of the tools and lemmas used in the proofs are valid in the general framework of stochastic games with incomplete information on both sides, it will be indicated each time whether the absorbing assumption is used or not, in order to make results more accessible for future research.
\section{Auxiliary stochastic games} \label{sec:auxiliary}
\subsection{Stochastic game on the belief set}
We first introduce notations that will be widely used in the sequel. 
Let $\Gamma=(K,L,\Omega,I,J,\rho,g)$ be a stochastic game with incomplete information on both sides. 
An element of $\Delta(K)$ (resp., $\Delta(L)$) will be called a \textit{belief on Player 1's type} (resp., a \textit{belief on Player 2's type}). We denote $X:=\Delta(I)^K$ (resp., $Y:=\Delta(J)^L$) the set of \textit{mixed actions} of Player 1 (resp., Player 2). Let $x \in X$, $y \in Y$, $k \in K$, $\ell \in L$, $i \in I$, and $j \in J$. The quantity $x(i|k)$ represents the probability that Player 1 plays $i$, knowing her type $k$, and $y(j|\ell)$ interprets in the same fashion. 
Denote
\begin{equation*}
\bar{x}^p(i):=\sum_{k \in K} p(k) x(i|k) \quad \text{and} \quad 
\bar{y}^q(j):=\sum_{\ell \in L} q(\ell) y(j|\ell).
\end{equation*}
The quantity $\bar{x}^p(i)$ is the probability that $i$ is played, given that the type prior is $p$ and Player 1 plays $x$. The quantity $\bar{y}^q(j)$ can be interpreted in the same fashion. 
Define $p^x(.|i) \in \Delta(K)$ by
$$
\forall k \in K \quad p^x(k|i) := \left\{
    \begin{array}{ll}
      \frac{x(i|k) p(k)}{\bar{x}^p(i)}  & \mbox{when} \ \bar{x}^p(i) \neq 0 \\
      p(k)  & \mbox{otherwise}
    \end{array}
\right.
$$
and $q^y(.|j) \in \Delta(L)$ by
$$
\forall \ell \in L \quad q^y(\ell|j) := \left\{
    \begin{array}{ll}
       \frac{y(j | \ell) q(\ell)}{\bar{y}^q(j)}  & \mbox{when} \ \bar{y}^p(j) \neq 0 \\
      q(\ell)  & \mbox{otherwise.}
    \end{array}
\right.
$$
The quantity $p^x(.|i)$ is the posterior belief on Player 1's type, given that she played $x$, and the realized action is $i$. The quantity $q^y(.| j)$ is interpreted similarly. 

We associate to $\Gamma$ a \textit{stochastic game} $\Gamma^e$, described by
a state space $\Omega^e:=\Delta(K) \times \Delta(L) \times \Omega$, action set $X$ for Player 1 and $Y$ for Player 2, transition function 
$\rho^e: \Omega^e \times X \times Y \rightarrow \Delta(\Omega^e)$ defined by
$$
\rho^e(p',q',\omega'|p,q,\omega,x,y) := \left\{
    \begin{array}{ll}
      \rho(\omega'|\omega,i,j) 
\bar{x}^p(i) \bar{y}^q(j) & \mbox{when} \ \exists (i,j) \in I \times J, \ (p',q')=(p^x(.|i),q^y(.|j)) \\
      0  & \mbox{otherwise},
    \end{array}
\right.
$$
and payoff function 
\begin{equation*}
g^e(p,q,\omega,x,y)=\sum_{(k,\ell,i,j) \in K \times L \times I \times J} p(k)q(\ell) x(i|k) y(j|\ell) g(k,\ell,\omega,i,j).
\end{equation*}
Given an initial state $\omega^e \in \Omega^e$, the game proceeds as follows: 
\begin{itemize}
\item
At each stage $m \geq 1$, simultaneously, Player 1 chooses $x_m \in X$ and Player 2 chooses $y_m \in Y$. The stage payoff is $g^e(\omega^e_m,x_m,y_m)$. 
\item
A new state $\omega^e_{m+1}$ is drawn according to $\rho^e(\omega^e_{m},x_m,y_m)$, and $(\omega^e_{m+1},x_m,y_m)$ is announced to the players.
\end{itemize} 
A behavior strategy (resp., pure strategy) for Player 1 is a measurable mapping 
\\
$\sigma: \cup_{m \geq 1} (\Omega^e \times X \times Y)^{m-1}
\times \Omega^e
 \rightarrow \Delta(X)$ 
(resp., $\sigma: \cup_{m \geq 1} (\Omega^e \times X \times Y)^{m-1}
\times \Omega^e
 \rightarrow X$). 
A behavior strategy (resp., pure strategy) for Player 2 is a measurable mapping $\tau: \cup_{m \geq 1} (\Omega^e \times X \times Y)^{m-1}
\times \Omega^e \rightarrow \Delta(Y)$
(resp., $\tau: \cup_{m \geq 1} (\Omega^e \times X \times Y)^{m-1}\times \Omega^e \rightarrow Y$). The set of behavior strategies for Player 1 (resp., Player 2) is denoted by $\Sigma^e$ (resp., $T^e$).
A tuple $(\omega^e,\sigma,\tau) \in \Omega^e \times \Sigma^e \times T^e$ induces a probability measure $\m{P}^e_{\omega^e,\sigma,\tau}$ on the set of infinite histories of the game
$(\Omega^e \times X \times Y)^{\m{N}}$, and the expectation with respect to this probability measure is denoted by $\m{E}^e_{\omega^e,\sigma,\tau}$.  
Given $\lambda \in (0,1]$ and $\omega^e \in \Omega^e$, the $\lambda$-discounted game $\Gamma^e_\lambda(\omega^e)$ is the game with payoff 
\begin{equation*}
\gamma^e_\lambda(\omega^e,\sigma,\tau):=\m{E}^e_{\omega^e,\sigma,\tau} \left( \sum_{m \geq 1} \lambda(1-\lambda)^{m-1} g^e(\omega^e_m,x_m,y_m) \right).
\end{equation*}
Since $X$ and $Y$ are compact metric, and $g^e$ and $\rho^e$ are continuous, the value of this game exists \cite[Proposition VII.1.4, p. 394]{MSZ}, and is denoted by
\begin{equation*}
v^e_{\lambda}(\omega^e):=\max_{\sigma \in \Sigma^e} \min_{\tau \in T^e} \gamma^{e}_\lambda(\omega^e,\sigma,\tau)=\min_{\tau \in T^e} \max_{\sigma \in \Sigma^e} \gamma^{e}_\lambda(\omega^e,\sigma,\tau).
\end{equation*}
The $n$-stage game could also be defined analogously to $\Gamma_n$. By Remark \ref{rem:tauberian}, this will not be necessary for our purpose. 

A strategy of Player 1 (resp., Player 2) is \textit{optimal in $\Gamma^e_\lambda$} if it realizes the above left-hand maximum (resp., right-hand side minimum) for any $\omega^e \in \Omega^e$. 
A strategy is \textit{stationary} if at each stage, it plays according to the current state only. Hence, a stationary strategy for Player 1 (resp., Player 2) identifies with a measurable mapping $\sigma: \Omega^e \rightarrow \Delta(X)$
(resp., $\tau: \Omega^e \rightarrow \Delta(Y)$).  

We introduce now notations that be widely used in the remainder of the paper. Given a bounded function $f:\Omega^e \rightarrow \m{R}$ and $(\omega^e,x,y) \in \Omega^e \times X \times Y$, denote
\begin{equation*}
\m{E}^e_{\omega^e,x,y}(f):=\sum_{{\omega^e}' \in \Omega^e} \rho^e({\omega^e}'|\omega^e,x,y)\cdot f({\omega^e}').
 \end{equation*}
Moreover,  given two action sets $A$ and $B$ and a payoff function $h:A \times B \rightarrow \m{R}$, the notation $\val_{(a,b) \in A \times B} \left\{h\right\}$ stands for the value of the zero-sum game $(A,B,h)$, when it exists. 

 The following proposition is a consequence of Mertens, Sorin and Zamir \cite[Proposition IV.3.3, p.186 and Proposition VII.1.4, p.394]{MSZ}: 
\begin{proposition} \label{prop_recursive} \mbox{}
Let $\lambda \in (0,1]$. The following statements hold:
\begin{enumerate}[(i)]
\item
$v^e_{\lambda}=v_\lambda$
\item \label{prop:shapley_e}
The function $v^e_\lambda$ is the only bounded function from $\Omega^e$ to $\m{R}$ that satisfies the \textit{Shapley equation}
\begin{equation} \label{eq:shapley_e}
\forall \omega^e \in \Omega^e, \ v^e_\lambda(\omega^e)=\displaystyle \val_{(x,y) \in X \times Y} \left\{\lambda g^e(\omega^e,x,y)+(1-\lambda) \m{E}^e_{\omega^e,x,y}(v^e_\lambda) \right\}
\end{equation}
 \item \label{prop:stationary_e}
 A stationary strategy $\sigma \in \Sigma^e$ (resp., $\tau \in T^e$) is optimal in $\Gamma^e_\lambda$ if and only if for any $\omega^e \in \Omega^e$, $\sigma(\omega^e)$ (resp., $\tau(\omega^e)$) is optimal in the above game. In particular, each player has a pure optimal stationary strategy in $\Gamma_\lambda^e$. 
\end{enumerate}
\end{proposition}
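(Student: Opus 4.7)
The plan is to handle the three claims in the order (ii), (iii), (i), since once (ii) and (iii) are available, identifying $v^e_\lambda$ with $v_\lambda$ reduces to a strategy-translation argument.

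For (ii), the central step is to verify that the one-shot game
\begin{equation*}
\val_{(x,y) \in X \times Y} \left\{ \lambda g^e(\omega^e,x,y) + (1-\lambda)\,\m{E}^e_{\omega^e,x,y}(f) \right\}
\end{equation*}
has a value for every bounded measurable $f:\Omega^e\to\m{R}$ that is concave in $p$ and convex in $q$. The payoff $g^e$ is bilinear in $(x,y)$; the continuation term is nonlinear in $(x,y)$ through the Bayesian updates $p^x,q^y$, but the identity $\bar{x}^p(i)\,p^x(k|i)=x(i|k)\,p(k)$ together with the $p$-concavity of $f$ yields, by a direct convex-combination computation, concavity of $x\mapsto \m{E}^e_{\omega^e,x,y}(f)$ (and symmetrically convexity in $y$). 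Sion's minmax theorem then produces the value, which is the content of MSZ IV.3.3. The Shapley operator $\Phi_\lambda$ defined by the right-hand side of \eqref{eq:shapley_e} is a $(1-\lambda)$-contraction in sup-norm; it stabilizes the closed subspace of bounded $p$-concave, $q$-convex functions (concavity/convexity is preserved precisely because $p\mapsto p^x(\cdot|i)$ and $q\mapsto q^y(\cdot|j)$ act as splittings). Banach's fixed point theorem then yields a unique bounded fixed point $\tilde v$, and backward induction on $n$-stage truncations identifies $\tilde v$ with $v^e_\lambda$.

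For (iii), a standard one-stage deviation argument shows that a stationary strategy is optimal in $\Gamma^e_\lambda$ if and only if at each state it is optimal in the one-shot game. Existence of a \emph{pure} stationary optimum reduces to choosing, at each $\omega^e$, an element of $X$ (resp.\ $Y$) attaining the val above: by affine-linearity of the one-shot payoff in $x$ for fixed $y$ (resp.\ in $y$ for fixed $x$), such optimizers lie in $X$ (not merely in $\Delta(X)$), and the Kuratowski--Ryll-Nardzewski selection theorem applied to the best-reply correspondence produces a measurable pure stationary selection.

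For (i), I would construct a payoff-preserving correspondence between $\Sigma$ and $\Sigma^e$ (and between $T$ and $T^e$). Given $\sigma\in\Sigma$, define $\sigma^e$ at stage $m$ to play deterministically the element $x_m\in X$ with $x_m(i|k):=\sigma(k,h_m)(i)$, where $h_m=(\omega_1,i_1,j_1,\dots,\omega_m)$ is the observable history. By iterated Bayes' rule, the belief trajectory $(p_m,q_m)$ generated by $\rho^e$ from $(p,q,\omega_1)$ equals the conditional law of $(k,\ell)$ given $h_m$ under $\m{P}_{p,q,\omega,\sigma,\tau}$. The tower property then gives the per-stage identity
\begin{equation*}
\m{E}_{p,q,\omega,\sigma,\tau}\bigl[g(k,\ell,\omega_m,i_m,j_m)\mid h_m\bigr]=g^e(p_m,q_m,\omega_m,x_m,y_m),
\end{equation*}
which on discount-summation yields $\gamma_\lambda(p,q,\omega,\sigma,\tau)=\gamma^e_\lambda((p,q,\omega),\sigma^e,\tau^e)$. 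The inverse map $\Sigma^e\to\Sigma$ (disintegrating in $k$) is symmetric, and taking $\max_\sigma\min_\tau$ on both sides yields $v_\lambda(p,q,\omega)=v^e_\lambda(p,q,\omega)$. The main obstacle is the value-existence step in (ii): Sion applies only once one has restricted $\Phi_\lambda$ to the $p$-concave/$q$-convex subspace, so one must verify invariance of this subspace under $\Phi_\lambda$ (in particular, that the val is itself concave in $p$ and convex in $q$), which uses precisely the splitting structure of the Bayesian updates. Everything else is routine bookkeeping relative to MSZ IV.3.3 and VII.1.4.
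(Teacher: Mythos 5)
The paper itself offers no proof of this proposition — it is quoted as a direct consequence of Mertens–Sorin–Zamir (Propositions IV.3.3 and VII.1.4) — so your reconstruction stands on its own. Parts (ii) and (iii) are essentially sound: writing $\bar{x}^p(i)f(p^x(\cdot|i),\cdot)$ as a perspective of the $p$-concave function $f$ gives concavity of the continuation term in $x$ (convexity in $y$ is symmetric), Sion then yields the val over the pure action sets $X\times Y$, and the Shapley operator is a $(1-\lambda)$-contraction preserving the class of bounded $p$-concave/$q$-convex functions (the splitting argument you point to is indeed what is needed there). Two small repairs: uniqueness among \emph{all} bounded solutions follows from $|\val A-\val B|\le\|A-B\|_\infty$, not from Banach on the restricted subspace; and in (iii) the one-shot payoff is concave, not affine, in $x$ — the existence of pure optimizers in $X$ comes from the Sion/concavity structure of (ii), not from linearity.

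The genuine gap is in (i). The translation $\sigma\mapsto\sigma^e$ with $x_m(i|k):=\sigma(k,h_m)(i)$ is not a strategy of $\Gamma^e$: a history of $\Gamma^e$ at stage $m$ is $(\omega^e_1,x_1,y_1,\dots,\omega^e_m)$ and does \emph{not} contain the realized actions $(i_1,j_1,\dots,i_{m-1},j_{m-1})$, and these cannot in general be recovered from the belief trajectory (when some $x_{m'}$ is non-revealing all posteriors $p^{x_{m'}}(\cdot|i)$ coincide, and $\omega_{m'+1}$ need not identify $(i_{m'},j_{m'})$ either). The ``inverse map by disintegrating in $k$'' is ill-defined for the symmetric reason: a strategy of $\Gamma^e$ conditions on Player 2's past mixed actions $y_{m'}$, hence on the beliefs $q_m$, which Player 1 cannot compute inside $\Gamma$ where only the realized $j_{m'}$ is announced. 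So your per-stage identity, while correct for an outside observer who knows both strategies, does not produce a payoff-preserving correspondence between the strategy spaces, and taking $\max_\sigma\min_\tau$ on both sides is not justified; note also that $\Gamma^e$ announces strictly more (the mixed actions), so the equality of values is not a formality. The way to close (i) — and what the cited MSZ Proposition IV.3.3 actually provides — is to prove that $v_\lambda$ itself satisfies equation \eqref{eq:shapley_e} by a recursive/splitting argument carried out inside $\Gamma$, where the fact that each guarantee inequality only requires one player to track the relevant posterior handles the information mismatch, and then to conclude $v_\lambda=v^e_\lambda$ from the uniqueness you established in (ii).
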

In the remainder, only pure strategies of $\Gamma^e$ will be considered, and the term ``pure" will be omitted. 
\subsection{Approximation of $\Gamma^e$ by a stochastic game with finite state space} \label{subsec:gamma_f}
Let $\Gamma=(K,L,\Omega,I,J,\rho,g)$ be a stochastic game with incomplete information on both sides, and $\Gamma^e$ be the game defined in the previous section. 
\begin{definition}
A \textit{triangulation of $\Delta(K)$} is a collection $(T_j)_{j \in [1\dots r]}$, $r \geq 1$, of sub-simplices of $\Delta(K)$ such that $\Delta(K)=\cup_{j \in [1 \dots r]} T_j$, and 
for all $j \neq j'$, $T_j \cap T_{j'}$ is either the empty set, or a common face to $T_j$ and $T_{j'}$. 
A triangulation of $\Delta(L)$ is defined similarly. 
\end{definition}
To simplify notations, in the sequel, we will identify a triangulation of $\Delta(K)$ with its set of vertices $P \subset \Delta(K)$, and a triangulation of $\Delta(L)$ with its set of vertices $Q \subset \Delta(L)$. 
\begin{definition}
A triangulation of $\Delta(K) \times \Delta(L)$ is a pair $(P,Q)$, $P \subset \Delta(K)$, $Q \subset \Delta(L)$, such that $P$ is a triangulation of $\Delta(K)$ and $Q$ is a triangulation of $\Delta(L)$. 
\end{definition}
Let $(P,Q)$ be a triangulation of $\Delta(K) \times \Delta(L)$. For $p \in \Delta(K)$, let $T$ be a sub-simplex of the triangulation $P$ that contains $p$, and $p^1,...,p^{|K|} \in \Delta(K)$ the vertices of $T$. Let $a_1(p),a_2(p),...,a_{|K|}(p) \in [0,1]$ be the corresponding coordinates:
\begin{equation*}
p=\sum_{k \in K} a_k(p) \cdot p^k.
\end{equation*}
We introduce similar notations for $q \in \Delta(L)$: 
\begin{equation*}
q=\sum_{\ell \in L} b_{\ell}(q) \cdot q^{\ell}.
\end{equation*}
We denote by $S[p] \in \Delta(P)$ the \textit{splitting} of $p$, that is, the law on $P$ defined by
\begin{equation*}
S[p]:=\sum_{k \in K} a_k(p) \cdot \delta_{p^k}.
\end{equation*}
We will denote $S[p'|p]$ the probability of $p'$ under law $S[p]$: hence, when $p'=p^k, \ k \in K$, $S[p'|p]=a_k(p)$, and $S[p'|p]=0$ otherwise. The splitting of $q$, denoted by $S[q] \in \Delta(Q)$, is defined similary, along with the notation $S[q'|q]$.
Thus, $S$ is defined on $\Delta(K) \cup \Delta(L)$, and $S[\Delta(K)] \subset \Delta(P)$ and $S[\Delta(L)] \subset \Delta(Q)$. 

Define a stochastic game with state space $\Omega^f:=P \times Q \times \Omega$, action sets $X$ for Player 1 and $Y$ for Player 2, transition function $\rho^f$ such that for all $(p,q,\omega,x,y) \in \Omega^f \times X \times Y$ and $(p',q',\omega') \in \Omega^f$,
\begin{equation*}
\rho^f(p',q',\omega'|p,q,\omega,x,y):=\sum_{i \in I} \sum_{j \in J}  
\rho^e(p^x(.|i),q^y(.|j),\omega'|p,q,\omega,x,y)  S[p'|p^x(.|i)] S[q'|q^y(.|j)],
\end{equation*}
and payoff function $g^f$ defined by
\begin{equation*}
g^f:={g^e}_{|\Omega^f \times X \times Y}.
\end{equation*}
The game $\Gamma^f$ proceeds in the same way as $\Gamma^e$, up to substituting $\Omega^f$ to $\Omega^e$, $\rho^f$ to $\rho^e$, and $g^f$ to $g^e$. Note that the definition of $\Gamma^f$ depends on $\Gamma^e$, but also on the choice of the triangulation $(P,Q)$. To avoid heavy notations, such a dependence is omitted. 
%
%
%
%
    
The following picture represents an example of belief evolution in $\Delta(K)$, when $K=\left\{k_1,k_2,k_3\right\}$, $I=\left\{A,B\right\}$, and prior belief is $p=\frac{1}{3} \cdot \delta_{k_1}+\frac{1}{3} \cdot \delta_{k_2}+\frac{1}{3} \cdot \delta_{k_3}$. The two arrows indicate the decomposition of $p$ into two posterior beliefs $p^x(.|A)$ and $p^x(.|B)$, under some mixed action $x \in \Delta(\left\{A,B\right\})^{\left\{k_1,k_2,k_3\right\}}$. This corresponds to the belief dynamics in $\Gamma^e$. In $\Gamma^f$, these beliefs are splitted again on the vertices of the corresponding sub-simplex, namely $p^1,p^2,p$ when $A$ is played, and $p,p^3,p^4$ when $B$ is played (see the dashed lines). 
\begin{center} 
\includegraphics[scale=0.4]{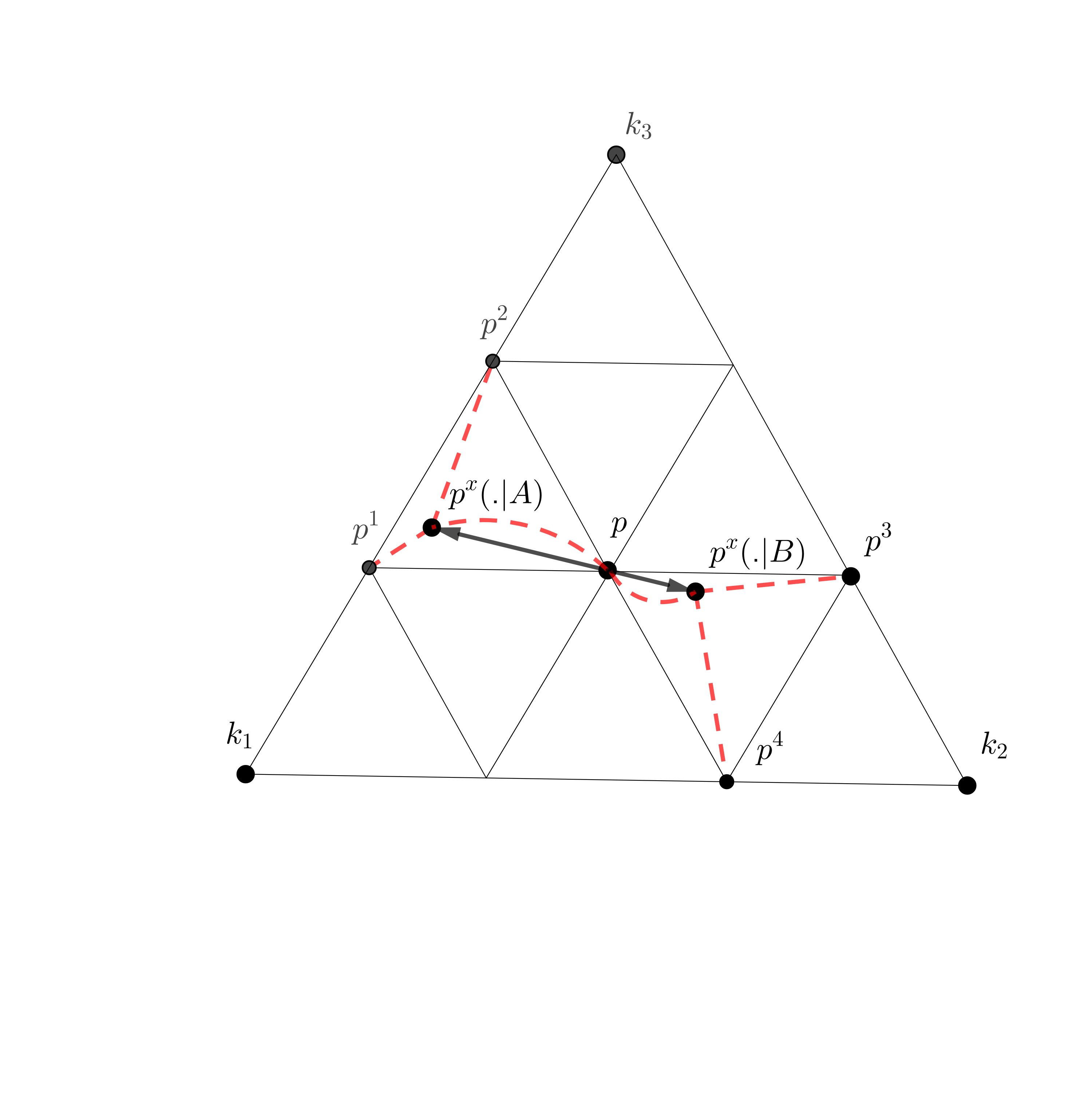}
\end{center}
We denote by $\Sigma^f$ and $T^f$ the set of behavior strategies 
in $\Gamma^f$, and $\Gamma^f_\lambda$ the $\lambda$-discounted game. By \cite[Proposition VII.1.4, p. 394]{MSZ}, $\Gamma^f_\lambda$ has a value, which is denoted by $v^f_\lambda$. 
Given a bounded function $h:\Omega^f \rightarrow \m{R}$ and $(\omega^f,\mu,\nu) \in \Omega^f \times \Delta(X) \times \Delta(Y)$, denote
\begin{equation*}
\m{E}^f_{\omega^f,\mu,\nu}(h):=\sum_{{\omega^f}' \in \Omega^f} \int_{X \times Y} \rho^f({\omega^f}'|\omega^f,x,y)\cdot h({\omega^f}') d\mu(x) d\nu(y).
 \end{equation*}
 We extend $g^f$ linearly by the formula
 \begin{equation*}
 \forall (\omega^f,\mu,\nu) \in \Omega^f \times \Delta(X) \times \Delta(Y),
 \quad g^f(\omega^f,\mu,\nu):=\int_{X \times Y} g^f(\omega^f,x,y) d\mu(x) d\nu(y).
 \end{equation*}

The following proposition is a consequence of Mertens, Sorin and Zamir \cite[Proposition IV.3.3, p.186 and Proposition VII.1.4, p.394]{MSZ}: 
\begin{proposition} \label{prop_recursive} \mbox{}
\begin{enumerate}[(i)]
\item \label{prop:shapley_f}
The function $v^f_\lambda$ is the only function from $\Omega^f$ to $\m{R}$ that satisfies the \textit{Shapley equation}
\begin{equation} 
\forall \omega^f \in \Omega^f, \ v^f_\lambda(\omega^f)=\displaystyle \val_{(\mu,\nu) \in \Delta(X) \times \Delta(Y)} \left\{\lambda g^f(\omega^f,\mu,\nu)+(1-\lambda) \m{E}^f_{\omega^f,\mu,\nu}(v^f_\lambda) \right\}
\end{equation}
 \item \label{prop:stationary_f}
 A stationary strategy $\sigma \in \Sigma^f$ (resp., $\tau \in T^f$) is optimal in $\Gamma^f_\lambda$ if and only if for any $\omega^f \in \Omega^f$, $\sigma(\omega^f)$ (resp., $\tau(\omega^f)$) is optimal in the above game. In particular, each player has a (behavior) optimal stationary strategy in $\Gamma_\lambda^f$. 
\end{enumerate}
\end{proposition}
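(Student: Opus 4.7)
The plan is to invoke the standard Shapley-equation fixed-point machinery for stochastic games with finite state space and compact action sets, which is precisely the content of the Mertens-Sorin-Zamir propositions cited. The key novelty relative to the analogous statement for $\Gamma^e$ is that the transition $\rho^f$ is \emph{not} multilinear in $(x,y) \in X \times Y$, because of the splitting map $S[\cdot|\cdot]$; so one cannot expect pure stationary optima in $X \times Y$, and one must work with $(\mu,\nu) \in \Delta(X) \times \Delta(Y)$.

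First, I would define the Shapley operator
\begin{equation*}
\Phi(h)(\omega^f) := \val_{(\mu,\nu) \in \Delta(X) \times \Delta(Y)} \bigl\{ \lambda g^f(\omega^f,\mu,\nu) + (1-\lambda)\, \m{E}^f_{\omega^f,\mu,\nu}(h) \bigr\}
\end{equation*}
on the finite-dimensional space $\m{R}^{\Omega^f}$ (finite because $\Omega^f = P \times Q \times \Omega$ is finite). To see that this value exists, note that $\Delta(X)$ and $\Delta(Y)$ are convex and weak-$*$ compact (as $X,Y$ are compact metric), while continuity of $g^f$ and $\rho^f$ in $(x,y)$ lifts to a bilinear continuous integrand on $\Delta(X) \times \Delta(Y)$; Sion's minmax theorem then yields the value and the existence of optimal pairs $(\mu^*_{\omega^f}, \nu^*_{\omega^f})$.

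Next, the elementary inequality $|\val A - \val B| \leq \|A-B\|_\infty$ for two games over identical action sets shows that $\Phi$ is a $(1-\lambda)$-contraction for $\|\cdot\|_\infty$, so Banach's fixed point theorem produces a unique fixed point $v$. A finite-horizon truncation (the tail $\lambda \sum_{m > N}(1-\lambda)^{m-1}\|g^f\|_\infty$ vanishes as $N \to \infty$, and the $N$-stage value obtained by iterating $\Phi$ from any initialization converges both to $v$ and, by backward induction applied to $\Gamma^f_\lambda$, to $v^f_\lambda$) identifies $v = v^f_\lambda$, giving (\ref{prop:shapley_f}). For (\ref{prop:stationary_f}), a stationary $\sigma$ playing $\sigma(\omega^f)$ optimally in the one-shot game at each state guarantees $v^f_\lambda(\omega^f)$ against any $\tau \in T^f$ by iterating the Shapley equation; conversely, any stationary optimum must be one-shot optimal at every reachable state, else a one-shot deviation would produce a strictly improving response for the opponent. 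Since $\Omega^f$ is finite, no measurable selection issue arises, and the Sion-optimal $\mu^*_{\omega^f}$ defines the required behavior strategy.

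The main (mild) obstacle is conceptual rather than technical: the nonlinearity of $\rho^f$ in $(x,y)$ forces the passage to mixed actions over the already-mixed sets $X,Y$, and one must be careful to check that $g^f$ and $\rho^f$ really are continuous on $X \times Y$ (continuity of the maps $(x,y) \mapsto (p^x(\cdot|i), q^y(\cdot|j))$ at beliefs where $\bar{x}^p(i)$ or $\bar{y}^q(j)$ vanish requires the explicit definition of $p^x(\cdot|i)$ there, and continuity of $S[\cdot|\cdot]$ in the point being split requires choosing a consistent triangulation selection on shared faces). Once these routine continuity checks are made, Sion's theorem applies and the rest of the argument is a direct transcription of the MSZ general results.
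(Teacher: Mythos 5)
Your proposal is correct and is essentially the argument the paper relies on: the paper proves nothing itself here, simply citing the general Mertens--Sorin--Zamir results (Propositions IV.3.3 and VII.1.4), and your Shapley-operator/contraction/one-shot-optimality reconstruction, together with the continuity check of $x \mapsto \bar{x}^p(i)\,S[p'|p^x(\cdot|i)]$ (also noted by the paper in its semi-algebraicity proof), is exactly the standard machinery behind that citation. The passage to $\Delta(X)\times\Delta(Y)$ because of the non-multilinearity of $\rho^f$ matches the paper's own remark that a value in pure actions is unclear.
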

\begin{remark}
It is unclear whether $\Gamma^f_\lambda$ has a value in pure strategies or not. 
\end{remark}
Let us now turn to the question of existence of the limit value in $\Gamma^f$. In general, stochastic games with finite state space and compact action sets may not have a limit value \cite{vigeral13}. Fortunately, $\Gamma^f$ has the following strong property:
   \begin{proposition}
The game $\Gamma^f$ is a stochastic game with separable semi-algebraic transition and payoff functions, in the sense of \cite[Theorem 3]{BGV13}:  there exists finite sets $I'$, $J'$, real numbers $m^{\omega^f}_{i' j'}, \ (\omega^f,i',j') \in \Omega^f \times I' \times J'$,  and functions $(\omega^f,x) \rightarrow a_{i'}(\omega^f,x), \ i' \in I'$ and $(\omega^f,y) \rightarrow b_{j'}(\omega^f,y), \ j' \in J$ that are continuous and semi-algebraic in the second variable, such that
\begin{equation*}
\forall \omega^f \in \Omega^f, \ \forall x \in X, \  \forall y \in Y, \quad g^f(\omega^f,x,y)=\sum_{i' \in I'} \sum_{j' \in J'} m^{\omega^f}_{i'j'} a_{i'}(\omega^f,x) b_{j'}(\omega^f,y),
\end{equation*}
and there exists real numbers $n^{\omega^f,{\omega^f}'}_{i,j}$, $(\omega^f,{\omega^f}',i,j) \in ({\Omega^f})^2 \times I \times J$, and functions 
$(\omega^f,{\omega^{f}}',x) \rightarrow c_{i}(\omega^f,{\omega^{f}}',x), \ i \in I$ and $(\omega^f,{\omega^f}',y) \rightarrow d_{j}(\omega^f,{\omega^f}',y), \ j \in J$ that are continuous and semi-algebraic in the third variable, such that
\begin{equation*}
\forall (\omega^f,{\omega^f}') \in ({\Omega^f})^2, \ \forall x \in X, \forall y \in Y, \quad \rho^f({\omega^f}'|\omega^f,x,y)=\sum_{i \in I} \sum_{j \in J} n^{\omega^f,{\omega^f}'}_{i,j} c_i(\omega^f,{\omega^f}',x)d_j(\omega^f,{\omega^f}',y). 
\end{equation*}
\end{proposition}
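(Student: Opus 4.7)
The plan is to produce the separable semi-algebraic decomposition by direct inspection, treating the payoff first (which is essentially trivial) and then the transition, where the only real content is understanding the composition $x\mapsto\bar x^p(i)\,S[p'|p^x(\cdot|i)]$.

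For the payoff, since $g^f$ is the restriction of $g^e$, I would take $I':=K\times I$ and $J':=L\times J$, set
\[
m^{\omega^f}_{(k,i),(\ell,j)}:=p(k)\,q(\ell)\,g(k,\ell,\omega,i,j),\qquad a_{(k,i)}(\omega^f,x):=x(i|k),\qquad b_{(\ell,j)}(\omega^f,y):=y(j|\ell),
\]
which are linear (hence continuous and semi-algebraic) in $x$ and $y$ respectively, and observe that the definition of $g^e$ rewrites precisely as $g^f(\omega^f,x,y)=\sum_{i',j'}m^{\omega^f}_{i'j'}\,a_{i'}(\omega^f,x)\,b_{j'}(\omega^f,y)$.

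For the transition, I would first rewrite the definition of $\rho^f$ in the form
\[
\rho^f({\omega^f}'|\omega^f,x,y)=\sum_{i\in I}\sum_{j\in J}\rho(\omega'|\omega,i,j)\,\bigl[\bar x^p(i)\,S[p'|p^x(\cdot|i)]\bigr]\,\bigl[\bar y^q(j)\,S[q'|q^y(\cdot|j)]\bigr],
\]
so that setting $n^{\omega^f,{\omega^f}'}_{i,j}:=\rho(\omega'|\omega,i,j)$, $c_i(\omega^f,{\omega^f}',x):=\bar x^p(i)\,S[p'|p^x(\cdot|i)]$, and $d_j(\omega^f,{\omega^f}',y):=\bar y^q(j)\,S[q'|q^y(\cdot|j)]$ immediately gives the required separable form; the only nontrivial point is the regularity of $c_i$ (and, symmetrically, $d_j$) as a function of $x$.

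The key observation is a homogeneity trick. Extend the barycentric-coordinate map $S[p'|\cdot]$ from $\Delta(K)$ to the positive cone $\mathbb R^K_+$ by positive $1$-homogeneity: for any vertex $p'$ of the triangulation and any $\tilde\mu\in\mathbb R^K_+\setminus\{0\}$, define $\tilde S[p'|\tilde\mu]:=\|\tilde\mu\|_1\,S[p'|\tilde\mu/\|\tilde\mu\|_1]$, with $\tilde S[p'|0]:=0$. Since barycentric coordinates of a point in a sub-simplex of $\Delta(K)$ extend linearly to coordinates on the cone generated by that sub-simplex, the map $\tilde\mu\mapsto\tilde S[p'|\tilde\mu]$ is piecewise linear on $\mathbb R^K_+$ (it is linear on each full-dimensional cone over a sub-simplex of $P$), continuous across the shared faces, and semi-algebraic because it is defined by finitely many linear pieces with polyhedral domains. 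Now $\tilde\mu_i(x):=(x(i|k)\,p(k))_{k\in K}$ satisfies $\|\tilde\mu_i(x)\|_1=\bar x^p(i)$ and $\tilde\mu_i(x)/\|\tilde\mu_i(x)\|_1=p^x(\cdot|i)$ on the set where $\bar x^p(i)>0$, so $\bar x^p(i)\,S[p'|p^x(\cdot|i)]=\tilde S[p'|\tilde\mu_i(x)]$ there, and both sides vanish (consistently with the convention $p^x(\cdot|i):=p$ when $\bar x^p(i)=0$, and the convention that $S[p'|p]\in[0,1]$ is bounded) at $\bar x^p(i)=0$. Consequently $c_i(\omega^f,{\omega^f}',x)=\tilde S[p'|\tilde\mu_i(x)]$ is the composition of a piecewise-linear semi-algebraic map with a linear map in $x$, hence continuous and semi-algebraic in $x$; the same argument applies to $d_j$ in $y$.

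The main obstacle is exactly this piecewise description: one must check that extending the splitting coordinates by homogeneity to the cone is globally well defined (the apparent ambiguity when a point lies on the common face of two sub-simplices is resolved because barycentric coordinates coincide on the shared face), and that the $\bar x^p(i)=0$ boundary is handled consistently. Once this is done, the decomposition above exhibits $\Gamma^f$ as a separable semi-algebraic game in the sense of \cite[Theorem 3]{BGV13}.
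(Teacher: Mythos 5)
Your proposal is correct, and it uses exactly the decomposition of the paper: $I'=K\times I$, $J'=L\times J$, $m^{\omega^f}_{(k,i),(\ell,j)}=p(k)q(\ell)g(k,\ell,\omega,i,j)$ with $a_{(k,i)}(\omega^f,x)=x(i|k)$, $b_{(\ell,j)}(\omega^f,y)=y(j|\ell)$, and $n^{\omega^f,{\omega^f}'}_{i,j}=\rho(\omega'|\omega,i,j)$, $c_i(\omega^f,{\omega^f}',x)=\bar{x}^p(i)S[p'|p^x(.|i)]$, $d_j(\omega^f,{\omega^f}',y)=\bar{y}^q(j)S[q'|q^y(.|j)]$. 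Where you diverge is in the only nontrivial step, the regularity of $c_i$ in $x$. The paper works on the set $A=\{\bar{x}^p(i)\neq 0\}$, partitions it into the semi-algebraic pieces $A_s$ where the posterior $p^x(.|i)$ lies in a given sub-simplex containing $p'$, and writes $S[p'|p^x(.|i)]$ on each piece as an explicit ratio $\frac{p^x(.|i)\cdot p^0}{p'\cdot p^0}$ with $p^0$ orthogonal to the opposite vertices, concluding semi-algebraicity piece by piece (continuity of $c_i$ is asserted rather than detailed). You instead clear the normalization first: extending the splitting coordinates $1$-homogeneously to the cone $\m{R}^K_+$ makes them linear on each cone over a sub-simplex, globally continuous (consistency on shared faces is the same well-definedness of $S$ used throughout the paper, and continuity at $0$ follows from the bound by $\|\tilde\mu\|_1$), and semi-algebraic, after which $c_i$ is just this piecewise-linear map composed with the linear map $x\mapsto(x(i|k)p(k))_{k\in K}$, with the $\bar{x}^p(i)=0$ boundary handled automatically since both sides vanish there. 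The two arguments buy slightly different things: the paper's gives an explicit rational formula on each semi-algebraic piece, while your conical-extension argument delivers continuity and semi-algebraicity in one stroke and in particular makes the behavior as $\bar{x}^p(i)\to 0$ (where $S[p'|p^x(.|i)]$ need not converge but is bounded) completely transparent, a point the paper passes over quickly. Both are valid proofs of the proposition.
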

\begin{proof}
Set $I':=K \times I$ and $J':=L \times J$. 
For all $\omega^f=(p,q,\omega) \in \Omega^f$, $i'=(k,i) \in I'$, $j'=(\ell,j) \in J'$, define $a_{i'}(\omega^f,x):=x(i|k)$, $b_{j'}(\omega^f,y):=y(j|\ell)$, and $m^{\omega^f}_{i'j'}:=p(k)q(\ell)g(k,\ell,\omega,i,j)$. For all $(i',j') \in I' \times J'$, the mappings 
$a_{i'}$ and $b_{j'}$ are continuous and semi-algebraic in the second variable, hence satisfy the conditions of the proposition.  

Moreover,
for all ${\omega^f}=(p,q,\omega) \in \Omega^f$, ${\omega^f}'=(p',q',\omega') \in \Omega^f$, $(i,j) \in I \times J$, define $n_{i,j}^{\omega^f,{\omega^f}'}:=\rho({\omega}'| \omega,i,j)$, $c_i(\omega^f,{\omega^f}',x):=\bar{x}^p(i)S[p'|p^x(.|i)]$, $d_j(\omega^f,{\omega^f}',y)=\bar{y}^q(j) S[q'|q^y(.|j)]$. Let $i \in I$. The mapping $c_i$ is continuous in the third variable. Let $p \in P$. 
The mapping $x \rightarrow \bar{x}^p(i)$ is linear, hence semi-algebraic on $X$. Thus, the set $A:=\left\{x \in X \ | \ \bar{x}^p(i) \neq 0 \right\}$ is semi-algebraic. Moreover, for $p' \in P$, the mapping $x \rightarrow S[p'|p^x(.|i)]$ is semi-algebraic on $A$. Indeed, let $T_1,T_2,\dots,T_{r_0}$ be the set of sub-simplices of $P$ that have $p'$ as a vertex. Let $s \in [1 \dots r_0]$ and let $p^1,\dots,p^{|K|-1}$ the vertices of $T_s$ different from $p'$. Let $A_s:=\left\{x \in A \ | \ p^x(.|i) \in T_s \right\}$, which is a semi-algebraic set. Let $p^0$ that is orthogonal to each $p^1,\dots,p^{|K|-1}$. Then 
\begin{equation*}
S[p'|p^x(.|i)]=\frac{p^x(.|i) \cdot p^0}{p' \cdot p^0},
\end{equation*}
which implies that the mapping $x \rightarrow S[p'|p^x(.|i)]$ is semi-algebraic on $A_s$, hence semi-algebraic on $A:=\cup_{1 \leq s \leq r_0} A_s$. Since $c_i(.,x)=0$ for $x \notin A$, we deduce that $c_i$ is 
semi-algebraic in the third variable. Similarly, for all $j \in J$, $d_j$ is continuous and semi-algebraic in the third variable. Hence, $\Gamma^f$ is stochastic game with separable semi-algebraic transition and payoff functions.
\end{proof}
It turns out that semi-algebraic separable stochastic games have a limit value, thanks to \cite[Theorem 3]{BGV13}, hence:
\begin{proposition} \label{asympt_f}
$\Gamma^f$ has a limit value. In particular,  $(v^f_\lambda)$ converges, as $\lambda$ tends to 0. 
\end{proposition}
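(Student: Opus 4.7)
The plan is to invoke the preceding proposition together with the main result of \cite{BGV13} essentially as a black box. The preceding proposition establishes that $\Gamma^f$ falls exactly into the class of stochastic games to which \cite[Theorem 3]{BGV13} applies: finite state space $\Omega^f = P \times Q \times \Omega$, compact action sets $X$ and $Y$, and payoff and transition functions that decompose in a separable manner with coefficients depending on the state and with factors that are continuous and semi-algebraic in the respective player's action. Thus the verification work has already been done.

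Concretely, I would proceed in two short steps. First, I would recall the hypothesis of \cite[Theorem 3]{BGV13} and observe that the decomposition
\[
g^f(\omega^f,x,y) = \sum_{i' \in I'} \sum_{j' \in J'} m^{\omega^f}_{i'j'}\, a_{i'}(\omega^f,x)\, b_{j'}(\omega^f,y),
\]
\[
\rho^f({\omega^f}'|\omega^f,x,y) = \sum_{i \in I} \sum_{j \in J} n^{\omega^f,{\omega^f}'}_{i,j}\, c_i(\omega^f,{\omega^f}',x)\, d_j(\omega^f,{\omega^f}',y),
\]
given in the previous proposition matches the required separable semi-algebraic form verbatim. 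Second, I would apply \cite[Theorem 3]{BGV13}, which asserts that any such game admits a limit value, meaning that both $(v^f_\lambda)_{\lambda \in (0,1]}$ and $(v^f_n)_{n \geq 1}$ converge uniformly to a common limit as $\lambda \to 0$ and $n \to \infty$. The second statement of the proposition, convergence of $(v^f_\lambda)$ as $\lambda \to 0$, is then an immediate special case.

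There is no real obstacle to overcome here: all technical content has been absorbed into the preceding proposition, which verifies the semi-algebraic separability. The proof is a one-line citation, and the only care needed is to confirm that the definitions of separability and semi-algebraicity used in \cite{BGV13} match the ones used in the preceding proposition, which they do by construction of the decomposition.
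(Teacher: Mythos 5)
Your proposal is correct and coincides with the paper's own argument: the preceding proposition verifies that $\Gamma^f$ is a stochastic game with separable semi-algebraic payoff and transition functions, and the proposition then follows directly by citing \cite[Theorem 3]{BGV13}, exactly as you do.
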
 
In order to prove Theorem \ref{main_thm}, we will thus prove that when the sub-simplices of the triangulation ``go to 0'', the distance between $v^f_\lambda$ and $v^e_\lambda$ vanishes. Formalizing this idea requires the following definition.
\begin{definition} 
The \textit{stepsize} of a triangulation $P \subset \Delta(K)$ is the maximum euclidean distance between two neighbor vertices.  
Let $\alpha>0$ and $C>0$. An $(\alpha,C)$-triangulation $P \subset \Delta(K)$ is a triangulation with stepsize $s$ smaller than $\alpha$, and such that for any $p \in \Delta(K)$, for any vertex $p' \in P$ such that $S[p'|p]>0$, 
\begin{equation} \label{eq:tr}
1-S[p'|p] \leq \frac{C}{s} \left\|p'-p\right\|_2.
\end{equation}
An $(\alpha,C)$-triangulation of $\Delta(L)$ is defined similarly, and a pair $(P,Q)$ such that $P$ and $Q$ are respectively $(\alpha,C)$-triangulations of $\Delta(K)$ and $\Delta(L)$ will be called an $(\alpha,C)$ triangulation of $\Delta(K) \times \Delta(L)$. 
\end{definition}
Intuitively, equation \eqref{eq:tr} means that the heights of each sub-simplex of the triangulation are of the same order as the stepsize of the triangulation: hence, sub-simplices are not ``flat''. 


\begin{proposition} \label{prop:ex_tr}
There exists $C>0$ such that for any $\alpha>0$, there exists an $(\alpha,C)$-triangulation of $\Delta(K) \times \Delta(L)$.
\end{proposition}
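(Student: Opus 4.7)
The plan is to reduce the claim to building triangulations of $\Delta(K)$ and of $\Delta(L)$ separately, since the condition on the pair $(P,Q)$ is by definition the conjunction of the two one-sided conditions. For each simplex I would exhibit a one-parameter family of refinements whose stepsize tends to $0$ while the worst ratio of stepsize to minimum-height stays bounded by a constant depending only on the dimension.

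Before constructing anything, I would first rewrite \eqref{eq:tr} geometrically. Fix a sub-simplex $T$ of $P$ with vertices $p^1,\ldots,p^{|K|}$ and a point $p=\sum_{k} a_k(p)\,p^k$ in $T$. For a vertex $p^{k_0}$ with $a_{k_0}(p)<1$, using $\sum_k a_k(p)=1$ yields
\begin{equation*}
p-p^{k_0}=(1-a_{k_0}(p))\,z,\qquad z:=\sum_{k\neq k_0}\frac{a_k(p)}{1-a_{k_0}(p)}(p^k-p^{k_0}).
\end{equation*}
Since $S[p^{k_0}|p]=a_{k_0}(p)$, inequality \eqref{eq:tr} simplifies to $\|z\|_2\geq s/C$ for every such convex combination $z$. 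Minimizing $\|z\|_2$ over convex combinations gives the distance from $p^{k_0}$ to the affine span of the opposite face, that is, the height $h_{k_0}(T)$ of $T$ at $p^{k_0}$. Hence \eqref{eq:tr} is equivalent to the purely geometric condition $h_{k_0}(T)\geq s/C$, uniformly over $T$ and $p^{k_0}$.

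To construct the refinements, I would use a classical regular lattice triangulation of $\Delta(K)\subset\m{R}^K$: take as vertex set $\frac{1}{N}\m{Z}^K\cap\Delta(K)$ and triangulate by Kuhn chains indexed by permutations of coordinates (consecutive vertices of a chain differ by some $\tfrac{1}{N}(e_i-e_j)$). All maximal sub-simplices are then translates, coordinate-permutations, or reflections of a single reference sub-simplex rescaled by $1/N$, so both the stepsize $s_N$ and the minimum height scale as $\Theta(1/N)$; consequently the ratio $s_N/h$ is bounded by a constant $C_K$ depending only on $|K|$. Applying the same construction to $\Delta(L)$ gives a constant $C_L$. For any $\alpha>0$, choosing $N$ large enough that $s_N\leq\alpha$ and setting $C:=\max(C_K,C_L)$ yields the desired $(\alpha,C)$-triangulation of $\Delta(K)\times\Delta(L)$.

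The main step that is not immediate is the regularity property of the Kuhn triangulation, namely that every maximal sub-simplex is congruent up to an isometry to a single reference shape; this is classical combinatorial geometry and independent of the game-theoretic content, so I expect it to be routine. An alternative route, avoiding this verification, is to fix any triangulation $P_0$ of $\Delta(K)$ with finite ratio $s/h$ and define $P_N$ as the image of $P_0$ under a self-similar refinement procedure (each sub-simplex is split by an affine copy of $P_0$), then prove inductively that this operation preserves the ratio $s/h$ up to a bounded multiplicative constant.
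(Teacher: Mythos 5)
Your proposal is correct in substance and follows essentially the same route as the paper: both arguments reduce \eqref{eq:tr} to the purely geometric requirement that every height of every cell be at least $s/C$ (the paper does this via the decomposition $p=a\,p'+(1-a)\,p''$ with $p''$ in the facet opposite $p'$, giving $1-a\le\|p'-p\|_2/d_2(p',F)$, which is exactly your computation with $z$), and both then triangulate $\Delta(K)$ and $\Delta(L)$ separately by a standard lattice-type subdivision with mesh of order $1/N$, taking the larger of the two constants. The only genuine difference is how the height bound is certified: the paper uses the edgewise subdivision of \cite{EG00} and extracts the height from the exact equal-volume property ($V/N^d=d_2(p',F)\,V'/d$ together with $V'\le(\sqrt2\,d/N)^{d-1}$), whereas you invoke shape regularity of the Kuhn cells. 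On that point, one correction: inside the probability simplex the cells of the Kuhn/edgewise subdivision are \emph{not} all isometric copies of a single reference simplex once $|K|\ge 4$; already for $N=2$ in a regular tetrahedron, the four corner cells (regular, edge $\sqrt2/2$) are not congruent to the four cells cut from the central octahedron, which contain a diagonal edge of length $1$. What is true, and is all your argument needs, is that every cell is spanned by $d$ edge vectors of the form $\tfrac1N(e_i-e_j)$, so up to translation there are only finitely many cell shapes, the collection depending on $|K|$ but not on $N$, each a $\tfrac1N$-scaling of a fixed nondegenerate simplex; hence minimal height and stepsize are both $\Theta(1/N)$ with dimension-only constants. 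Your fallback route (iterated refinement by affine copies of an arbitrary $P_0$) is the weaker part: such iterations do not in general preserve aspect ratios, so you would in any case have to rely on a scheme of edgewise/red-refinement type, i.e.\ on the same classical fact. Finally, note the degenerate case $|K|=1$ (where \eqref{eq:tr} is vacuous), which the paper treats separately.
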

\begin{proof}
If $|K|=1$, then the left-hand side of inequality \eqref{eq:tr} is 0, hence the proposition holds. Assume $|K|>1$, and let $d:=|K|-1$. 
Let $N \geq 1$. By \cite{EG00}, there exists a triangulation $P$ of $\Delta(K)$ composed of $N^d$ sub-simplices with the same volume $V/N^d$, where $V$ is the volume of $\Delta(K)$. Moreover, the stepsize $s$ is smaller than $\sqrt{2}d/N$ \cite[p.711]{EG00}. Let $p \in \Delta(K)$ and $p' \in P$ such that $a:=S[p'|p]>0$. Let $T$ be a sub-simplex that contains $p$ and $p'$. Decompose $p$ as $p=a \cdot p'+(1-a) \cdot p''$, where $p'' \in \Delta(K)$ lies in the facet $F$ of $T$ that is opposite to $p'$. Then $\left\|p'-p\right\|_2=(1-a) \left\|p'-p''\right\|_2$. Note that $\left\|p'-p''\right\|_2 \geq d_2(p',F)$, where $d_2(p',F)$ is the euclidean distance between $p'$ and the set $F$. This yields
\begin{equation} \label{eq:A_tr}
 1-a \leq \left\|p'-p\right\|_2/d_2(p',F).
 \end{equation}
 Let us give a lower bound on $d_2(p',F)$. The $d$-dimensional volume of $T$ is by definition equal to  $d_2(p',F) \cdot V'/d$, where $V'$ is the $(d-1)$-dimensional volume of $F$. We deduce that
$V/N^d=d_2(p',F) \cdot V'/d$, and $d_2(p',F)=d V /(V'N^d)$. A rough majorization of $V'$ is that it should be smaller than the volume of a $(d-1)$-dimensional hypercube with side length $\sqrt{2} d/N$, hence 
$V' \leq (\sqrt{2} d/N)^{d-1}$. 
We deduce that $d_2(p',F) \geq d V  / (N (\sqrt{2}d)^{d-1})$. Setting $C:=(\sqrt{2}d)^{d}/dV$ and plugging the inequality in \eqref{eq:A_tr} gives
\begin{equation*}
1-a \leq C (\sqrt{2}d/N)^{-1} \left\|p'-p\right\|_2 \leq \frac{C}{s} \left\|p'-p\right\|_2. 
\end{equation*}
This implies that the triangulation $P$ is a $(\sqrt{2}d/N,C)$-triangulation. A similar construction can be made for the simplex $\Delta(L)$. Since $N$ is arbitrary, this yields the proposition. 
\end{proof}
A technical difficulty met in the proof of Theorem \ref{main_thm} is that it is not clear whether $v^f_\lambda$ is a Lipschitz function with respect to $p$ and $q$ or not. To remedy this problem, we introduce the following assumption: 
\begin{assumption} \label{asslip}
Assume that there exists $i^* \in I, j^* \in J$ and $\omega^1, \omega^2 \in \Omega$ two absorbing states such that $g(\omega^1,.)=-\left\|g\right\|_{\infty}$, $g(\omega^2,.)=\left\|g\right\|_{\infty}$, and for all non-absorbing state $\omega \in \Omega$, for all $j \neq j^*$, $\rho(\omega,i^*,j)=\delta_{\omega^1}$, and for all $i \neq i^*$, $\rho(\omega,i,j^*)=\delta_{\omega^2}$, and $\rho(\omega,i^*,j^*)=\frac{1}{2} \cdot \delta_{\omega^1}+\frac{1}{2} \cdot \delta_{\omega^2}$. 
\end{assumption}

As far as Theorem \ref{main_thm} is concerned, this assumption is without loss of generality. Indeed, one can always add to $I$ and $J$ actions $i^*$ and $j^*$, and add to $\Omega$ states $\omega^1$ and  $\omega^2$ that satisfy the above assumption. In $\Gamma_\lambda$, any strategy that plays $i^*$ or $j^*$ with positive probability after some non-zero probability history is clearly dominated, hence adding $i^*$ and $j^*$ does not change 
$v_\lambda=v^e_{\lambda}$. Still, it could change $v^f_\lambda$ (again, it is not clear whether this is the case or not). Such an assumption will turn useful to prove a Lipschitz-type property on $v^f_\lambda$ with respect to $p$ and $q$ (it will be stated later on, in Subsection \ref{subsec:modif}). This Lipschitz property is indispensable to prove the following proposition:
\begin{proposition} \label{comparison_e_f} 
Let $\Gamma=(K,L,\Omega,I,J,\rho,g)$ be an absorbing game with incomplete information on both sides, that satisfies Assumption \ref{asslip}. Then for all $\varepsilon>0$, for all $C>0$, there exists $\alpha>0$ such that for any $(\alpha,C)$-triangulation $(P,Q)$, 
for all $\omega^f \in \Omega^f$ and $\lambda \in (0,1]$, 
\begin{equation*}
\left|v^e_\lambda(\omega^f)-v^f_\lambda(\omega^f)\right| \leq \varepsilon.
\end{equation*}
\end{proposition}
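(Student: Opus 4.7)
The plan is to compare $\Gamma^e$ and $\Gamma^f$ via an explicit coupling of their belief dynamics at a common state $\omega^f=(p,q,\omega)\in\Omega^f$. By symmetry it suffices to treat the inequality $v^e_\lambda(\omega^f)\leq v^f_\lambda(\omega^f)+\varepsilon$. I would fix a stationary optimal Player~2 strategy $\tau^f$ of $\Gamma^f_\lambda$ (granted by Proposition~\ref{prop_recursive}\eqref{prop:stationary_f}) and extend it to a Player~2 strategy $\tilde\tau$ in $\Gamma^e$ by prescribing, at every history of $\Gamma^e$, the play of $\tau^f$ evaluated at a fresh random splitting of the current $\Gamma^e$-belief. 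The goal is then to bound $\gamma^e_\lambda(\omega^f,\sigma^e,\tilde\tau)-v^f_\lambda(\omega^f)$ uniformly in $\sigma^e$, and symmetrically for the reverse inequality.

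Under this coupling, one tracks in parallel the Bayesian belief process $(p^e_m,q^e_m)$ of $\Gamma^e$ and a splitted process $(p^f_m,q^f_m)\in P\times Q$ such that, conditionally on the past, $p^f_m$ is drawn from $S[p^e_m]$. The crucial property of the splitting operator is that it preserves conditional means, so the gap $p^f_m-p^e_m$ is a martingale-difference. Combined with the $(\alpha,C)$-triangulation inequality \eqref{eq:tr}, which forces the splitting to remain on the nearest vertex with high probability unless the Bayesian jump itself has size of order $\alpha$, the one-step conditional variance of the splitting is at most $O(\alpha)\cdot\left\|p^e_{m+1}-p^e_m\right\|_2$. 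Summing over stages yields an $L^2$-control of $p^f_m-p^e_m$ by the square root of $O(\alpha)$ times the cumulative $L^1$-variation of the Bayesian belief martingale.

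Weighting by the discount factors $\lambda(1-\lambda)^{m-1}$, summing, and invoking a Lipschitz-type bound on $v^f_\lambda$ with respect to $(p,q)$---which is precisely where Assumption~\ref{asslip} intervenes, by adjoining outside options that enforce such a bound uniformly in $\lambda$ and in the triangulation---one arrives at a bound on the total coupling error of the form $\sqrt{\alpha V}$, where $V$ is the expected $L^1$-variation of the belief martingales generated under the played strategies. To make this arbitrarily small by picking $\alpha$ small, one needs $V$ bounded independently of $\lambda$. This is achieved by first replacing $\sigma^e$ by an approximately optimal \emph{concise} strategy from Section~\ref{sec:concise}: since concise $\varepsilon$-optimal strategies exist for both players, one can take a near saddle point in concise strategies to generate belief martingales of uniformly bounded $L^1$-variation, reducing the problem to choosing $\alpha$ small in terms of this variation bound.

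The two steps I expect to be most delicate are the Lipschitz regularity of $v^f_\lambda$ in $(p,q)$, because the splitting operator is only piecewise-linear and changes discontinuously across sub-simplex faces (Assumption~\ref{asslip} supplying an unconditional ``drop-to-$\pm\left\|g\right\|_\infty$'' threat that regularises the value), and the uniform $L^1$-variation bound for belief martingales, which cannot follow from generic martingale estimates and instead relies essentially on the absorbing structure of the game to force concise strategies to reveal information only a uniformly bounded number of times.
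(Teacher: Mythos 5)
Your high-level plan (couple the two belief dynamics, treat the splitting as martingale noise whose one-step variance is controlled by the $(\alpha,C)$-property, and pay for it with a bounded $L^1$-variation of the belief martingale obtained from concise strategies) is the right skeleton, but two of its load-bearing steps do not work as stated. First, the coupling itself: you assert that one can arrange the $\Gamma^f$ belief $p^f_m$ to be, at every stage, a fresh splitting $S[p^e_m]$ of the current $\Gamma^e$ belief. This is not achievable if Player 1 plays the \emph{same} mixed action in both games: once $p^f_m$ sits on a vertex different from $p^e_m$, the Bayesian update of $p^f_m$ under $x_m$ differs from the update of $p^e_m$ by a term that neither telescopes nor has zero conditional mean, so the gap $p^f_m-p^e_m$ is not a sum of splitting martingale increments and your $L^2$ estimate collapses. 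The paper's central device, missing from your sketch, is the translation mapping $T$ of Definition \ref{def_translation}: Player 1's action in $\Gamma^f$ is $X'_m=T(X_m,P_m,P'_m)$, which by Proposition \ref{prop_translation} preserves the law of the realized action and makes the posterior gap exactly equal to the prior gap, so that only the splitting noise accumulates; keeping $T$ well defined is precisely why the concise strategy is also required to be $\varepsilon$-ambiguous, and why a stopping time $T_0$ is introduced. Second, your quantifiers are on the wrong side: you fix $\tilde\tau$ (a simulation of $\tau^f$ in $\Gamma^e$) and then propose to ``replace $\sigma^e$ by a concise strategy.'' But the bound must hold for the maximum over \emph{all} $\sigma^e$ against your fixed $\tilde\tau$, and a best reply to $\tilde\tau$ has no reason to be concise or to generate bounded-variation beliefs; existence of concise $\varepsilon$-optimal strategies does not license this replacement. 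The paper avoids the problem by running the transfer in the other direction: Player 1 takes her own concise, ambiguous, $12\varepsilon\left\|g\right\|_\infty$-optimal strategy in the modified game $\Gamma^\eta$ (Proposition \ref{prop:bounded_var_eta}) and translates it into $\Gamma^f$ against the optimal $\tau^f$, which gives $v^f_\lambda\geq v^e_\lambda-\Theta(\varepsilon)$; the other inequality comes from exchanging the players' roles at the level of Proposition \ref{comparison_e_f_w}, not inside one coupling.

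Two further points you gloss over are genuinely needed. The variation bound of Propositions \ref{prop:bounded_var} and \ref{variation_1_e} holds only up to the first time the belief approaches the frontier $F_\varepsilon$ (and, contrary to your last sentence, that bound uses no absorbing structure at all; absorption enters only in Proposition \ref{prop:opt_concise}); consequently the argument cannot be closed by a single discounted summation. The paper handles the post-frontier phase by an induction on the size of $\supp(p)$, combined with Proposition \ref{lipf}, which is not a global Lipschitz property of $v^f_\lambda$ in $(p,q)$ (the paper explicitly does not know whether that holds, even under Assumption \ref{asslip}) but a one-sided comparison near the frontier involving $\left\|v^f_\lambda-v^\eta_\lambda\right\|_{F_0}$, fed by the induction hypothesis. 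Finally, the splitting of Player 2's belief in $\Gamma^f$ cannot be reproduced by any mixed action in $\Gamma^e$; this is why the paper introduces the enlarged game $\Gamma^\eta$ with $v^\eta_\lambda=v^e_\lambda$ (Proposition \ref{equal_e_eta}), another ingredient absent from your proposal. So the proposal shares the paper's philosophy but is missing the translation mapping, the correct direction of the strategy transfer, the frontier stopping-time/induction mechanism, and the $\Gamma^\eta$ construction, each of which is essential.
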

Proving this proposition is the main difficulty to show Theorem \ref{main_thm}. Indeed, let us prove that it readily implies Theorem \ref{main_thm}:
\begin{proof}[Proof of Theorem \ref{main_thm} admitting Proposition \ref{comparison_e_f}]
Let $\Gamma=(K,L,\Omega,I,J,\rho,g)$ be an absorbing game with incomplete information on both sides. 
As explained before, as far as the proof of Theorem \ref{main_thm} is concerned, we can assume w.l.o.g. that Assumption \ref{asslip} is in force. 
\\ 
Let $C>0$ given by Proposition \ref{prop:ex_tr}. Let $\varepsilon>0$. Let $\alpha$ be given by Proposition \ref{comparison_e_f}, and $\alpha_0:=\min(\alpha,\varepsilon)$. By Proposition \ref{prop:ex_tr},  there exists an $(\alpha_0,C)$-triangulation $(P,Q)$ of $\Delta(K) \times \Delta(L)$. Because $\alpha_0 \leq \alpha$, $(P,Q)$ is an $(\alpha,C)$-triangulation. By Proposition \ref{comparison_e_f}, for all $\lambda \in (0,1]$, for all $(p,q,\omega) \in \Omega^f$,
\begin{equation*}
v^f_\lambda(p,q,\omega)-\varepsilon \leq v^e_\lambda(p,q,\omega) \leq v^f_\lambda(p,q,\omega)+\varepsilon.
\end{equation*}
Let $(p,q,\omega) \in \Omega^f$, and $v^*(p,q,\omega):=\lim_{\lambda \rightarrow 0} v^f_\lambda(p,q,\omega)$, which exists by Proposition \ref{asympt_f}. By the previous inequality, we deduce that
\begin{equation*}
\limsup_{\lambda \rightarrow 0} v^e_\lambda(p,q,\omega) \leq v^*(p,q,\omega)+\varepsilon \leq  \liminf_{\lambda \rightarrow 0} v^e_\lambda(p,q,\omega)+2 \varepsilon.
\end{equation*}
Because $v^e_\lambda(\omega,p,.)$ and $v^e_\lambda(\omega,.,q)$ are $\left\|g\right\|_{\infty}$-Lipschitz, and $\alpha_0 \leq \varepsilon$, we deduce that for all $(p,q,\omega) \in \Omega^e$, 
\begin{equation*}
\limsup_{\lambda \rightarrow 0} v^e_\lambda(p,q,\omega) \leq \liminf_{\lambda \rightarrow 0} v^e_\lambda(p,q,\omega)+2 \varepsilon+2\varepsilon \left\|g\right\|_{\infty}.
\end{equation*}
Since $\varepsilon$ is arbitrary, we get that $(v^e_\lambda)$ converges pointwise. Because $(v^e_\lambda)$ is equi-Lipschitz, it converges uniformly. By Proposition \ref{prop_recursive}, $v^e_\lambda=v_\lambda$, hence $(v_\lambda)$ converges uniformly. By Remark \ref{rem:tauberian}, this proves Theorem \ref{main_thm}.  
\end{proof}
Notice that up to switching the roles of Player 1 and Player 2, to prove Proposition \ref{comparison_e_f}, it is enough to show the following:
\begin{proposition} \label{comparison_e_f_w}
Let $\Gamma=(K,L,\Omega,I,J,\rho,g)$ be an absorbing game with incomplete information on both sides, that satisfies Assumption \ref{asslip}.
Then for all $\varepsilon>0$, for all $C>0$, there exists $\alpha>0$ such that for any $(\alpha,C)$-triangulation $(P,Q)$, for all $\omega^f \in \Omega^f$ and $\lambda \in (0,1]$, 
\begin{equation*}
v^f_\lambda(\omega^f) \geq v^e_\lambda(\omega^f)-\varepsilon.
\end{equation*}
\end{proposition}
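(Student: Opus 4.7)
The plan is to build, for every $\lambda\in(0,1]$ and every $\omega^f=(p,q,\omega)\in\Omega^f$, a Player 1 strategy $\sigma^f\in\Sigma^f$ that guarantees $v^e_\lambda(\omega^f)-\varepsilon$ in $\Gamma^f_\lambda(\omega^f)$. The three ingredients are: a \emph{concise} $\varepsilon/3$-optimal strategy $\sigma^e\in\Sigma^e$ for Player 1 in $\Gamma^e_\lambda$ produced by Section \ref{sec:concise}; a pathwise coupling that lifts $\sigma^e$ to a strategy $\sigma^f$ in $\Gamma^f$; and a payoff comparison using the Lipschitz-type regularity of $v^f_\lambda$ in the belief variables (this is precisely what Assumption \ref{asslip} is designed to deliver in Subsection \ref{subsec:modif}).

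Concretely, Section \ref{sec:concise} yields a constant $M=M(\varepsilon,\Gamma)$ such that, against every opponent, the strategy $\sigma^e$ makes the induced belief martingale $(P^e_m,Q^e_m)$ satisfy
\begin{equation*}
\mathbb{E}\!\left[\sum_{m\geq1}\lambda(1-\lambda)^{m-1}\bigl(\|P^e_{m+1}-P^e_m\|_1+\|Q^e_{m+1}-Q^e_m\|_1\bigr)\right]\leq M,
\end{equation*}
uniformly in $\lambda$ thanks to the absorbing structure. Given any $\tau^f\in T^f$, I then set up a single probability space carrying both the $\Gamma^e$-trajectory under $(\sigma^e,\tilde\tau)$ and the $\Gamma^f$-trajectory under $(\sigma^f,\tau^f)$, where $\sigma^f$ and $\tilde\tau$ are defined by pulling $\sigma^e$ and $\tau^f$ back through the running splitting and by forcing identical action and $\Omega$-realizations $(i_m,j_m,\omega_{m+1})$ in both games. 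The key fact is that $S$ is mean-preserving: $\mathbb{E}_{p'\sim S[p]}[p']=p$. Hence $(P^f_m-P^e_m,\,Q^f_m-Q^e_m)$ is a martingale difference sequence, and the $(\alpha,C)$-triangulation condition \eqref{eq:tr} dominates its $L^1$-jump at stage $m$ by a constant (depending only on $|K|,|L|,C$) times $\|P^e_{m+1}-P^e_m\|_1+\|Q^e_{m+1}-Q^e_m\|_1$.

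Summing against the discount weights and plugging in the variation bound gives
\begin{equation*}
\mathbb{E}\!\left[\sum_{m\geq1}\lambda(1-\lambda)^{m-1}\bigl(\|P^f_m-P^e_m\|_1+\|Q^f_m-Q^e_m\|_1\bigr)\right]\leq C'\alpha M
\end{equation*}
for some $C'=C'(C,\Gamma)$; combined with the Lipschitzness of $g^e$ in $(p,q)$ and with the Lipschitz-type property of $v^f_\lambda$ in $(p,q)$, a telescoping/Shapley-equation argument yields $\gamma^f_\lambda(\omega^f,\sigma^f,\tau^f)\geq\gamma^e_\lambda(\omega^f,\sigma^e,\tilde\tau)-\kappa C'\alpha M$ for a constant $\kappa$. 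Choosing $\alpha$ so that $\kappa C'\alpha M\leq 2\varepsilon/3$ and taking the infimum over $\tau^f$ finishes the proof.

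The main obstacle is the coupling construction itself: one must define $\tilde\tau\in T^e$ so that the joint law of the played actions in $\Gamma^e$ matches exactly the law generated in $\Gamma^f$ by $\tau^f$ under the splitting, and verify that the splitting increments are genuinely centered given the natural filtration of past actions---this is what legitimates the martingale-difference estimate driving the error bound. Carrying out this bookkeeping is where the geometric content of \eqref{eq:tr} is used; once done, the remainder of the argument is a routine application of the Shapley equation together with Fubini, and notably does not itself invoke the absorbing hypothesis, which enters only through the concise-strategy input.
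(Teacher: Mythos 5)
Your outline misses the two ideas that actually carry the paper's proof, and both omissions are fatal as stated. First, the variation bound you invoke from Section \ref{sec:concise} does not exist in the form you use it: Proposition \ref{prop:bounded_var} controls only Player 1's belief martingale $(p_m)$, and only up to the stopping time $T$ at which $p_m$ approaches the frontier $F_\varepsilon$ (the $\varepsilon$-concise/ambiguous estimates rely on $\|1/p_m\|_\infty\leq\varepsilon^{-1}$ and blow up near $\partial\Delta(K)$); there is no bound at all on the variation of $(q_m)$, which is driven by Player 2 and cannot be controlled by Player 1's choice of a concise strategy (your displayed inequality with the weights $\lambda(1-\lambda)^{m-1}$ is trivially true with $M=4$ but is not the quantity the coupling needs, namely the \emph{undiscounted} accumulated variation entering the second-moment estimate of the splitting noise). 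The paper handles the $q$-coordinate not by bounding its variation but by making it \emph{identical} in the two coupled trajectories, and this is exactly why the enlarged game $\Gamma^\eta$ of Subsection \ref{subsec:modif} is needed: the splitting of $q$ on the triangulation is in general not generable by any mixed action of Player 2 in $\Gamma^e$, so your pulled-back strategy $\tilde\tau\in T^e$ does not exist; one must enlarge Player 2's action set to $Y\times\{F,E\}$ and prove $v^\eta_\lambda=v^e_\lambda$ (Proposition \ref{equal_e_eta}, via convexity of $v^e_\lambda$ in $q$). The frontier issue is equally structural: because the coupling can only be maintained while $p_m$ stays away from $F_\varepsilon$ (and while the translation of Player 1's action remains feasible), the paper's proof is an induction on $|\supp(p)|$, stopping the coupled trajectories at a time $T_0$ and invoking, at $T_0$, Proposition \ref{lipf} (the only place Assumption \ref{asslip} is used) together with the induction hypothesis on the faces $F_0$. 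Your appeal to a ``Lipschitz-type property of $v^f_\lambda$ in $(p,q)$'' available globally is precisely what the paper says is unknown; Proposition \ref{lipf} gives only a one-sided comparison near the frontier modulo $\|v^f_\lambda-v^\eta_\lambda\|_{F_0}$, which is why the induction cannot be dispensed with and why your final ``routine Shapley-equation'' step conceals the main difficulty.

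Second, the coupling itself requires more than forcing identical realized actions: since the current beliefs $P_m$ and $P'_m$ differ, Player 1 cannot replay the same mixed action in $\Gamma^f$; the paper introduces the translation mapping $T(X_m,P_m,P'_m)$ of Definition \ref{def_translation} so that (i) the action distributions coincide, which is what legitimizes drawing a common $(I_m,J_m,\Omega_{m+1})$, and (ii) the posterior difference is exactly preserved before splitting (Proposition \ref{prop_translation}), so that $P'_m-P_m$ is the sum of the centered splitting increments and the $(\alpha,C)$-condition \eqref{eq:tr} can bound their conditional second moments by $(C+1)s\,\m{E}(\|P_{m+1}-P_m\|_2\mid\mathcal{F}_m)$. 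The translation is only defined on a constrained set $X_0$, and the $\varepsilon$-ambiguity of the concise strategy is what guarantees that when feasibility fails the process is already near the frontier, feeding back into the frontier/induction machinery above. Without the enlargement $\Gamma^\eta$, the translation mapping, the stopping time, and the induction on the support combined with Proposition \ref{lipf}, the argument as you propose it does not go through.
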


The next two sections are dedicated to the proof of this proposition. We will build a certain type of approximately optimal strategies for Player 1 in $\Gamma^e_\lambda$, and ``copy" them in $\Gamma^f_\lambda$. Hence, most concepts and lemmas will be stated under Player 1's perspective. 
\section{Belief martingales with bounded variation} \label{sec:concise}
Taking aside technical details, the goal of this section is to build $\varepsilon$-optimal strategies for Player 1 in $\Gamma^e$ such that the belief process on her type $(p_m)_{m \geq 1}$ has a $L^1$-variation 
$\sum_{m \geq 1} \left\|p_{m+1}-p_m \right\|_1$ that can be bounded in expectation by a term depending only on $\varepsilon$ and on the data of the game. Such a property will be crucial in the coupling between $\Gamma^e$ and $\Gamma^f$ made in Section \ref{sec:coupling}. 
\subsection{Main result of the section and notion of concise strategy}
Let $\Gamma=(K,L,\Omega,I,J,\rho,g)$ be a stochastic game with incomplete information on both sides.
\begin{definition}
Let $\varepsilon>0$. A mixed action $x \in \Delta(I)^K$ is \textit{$\varepsilon$-ambiguous} if it satisfies the following property:
For all $k \in K$ and $i \in I$, 
\begin{equation*}
p^x(k|i) \geq \varepsilon \min_{k' \in K} p(k'). 
\end{equation*}
A strategy is \textit{$\varepsilon$-ambiguous} if for any history $h$, $\sigma(h)$ is $\varepsilon$-ambiguous. 
\end{definition}
\begin{definition}
Let $\varepsilon \geq 0$. The \textit{$\varepsilon$-frontier} of $\Delta(K)$ is the set $F_{\varepsilon} \subset \Delta(K)$ defined by 
\begin{equation*}
F_\varepsilon:=\left\{p \in \Delta(K) \ | \exists k \in K, \  p(k) \leq \varepsilon \right\}.
\end{equation*}
\end{definition}
The aim of Section \ref{sec:concise} is to prove the following result:
\begin{proposition} \label{prop:bounded_var}
Let $\Gamma=(K,L,\Omega,I,J,\rho,g)$ be an absorbing game with incomplete information on both sides.
Let $\varepsilon \in (0,1/4]$, $T:=\max\left\{m \geq 1, p_m \in \Delta(K) \setminus F_{\varepsilon} \right\}$, and $\lambda \in (0,1]$. Then Player 1 has a (pure) $12\varepsilon \left\|g\right\|_\infty$-optimal stationary strategy $\sigma$ in $\Gamma^e_\lambda$ that is $\varepsilon$-ambiguous, and such that for all $\tau \in T^e$,
\begin{equation*}
\m{E}^{e}_{\omega^0,\sigma,\tau}\left(\sum_{m=1}^{T} \left\|p_{m+1}-p_m\right\|_1\right) \leq 3 \sqrt{|K|} \varepsilon^{-5}. 
\end{equation*}
\end{proposition}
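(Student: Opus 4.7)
The plan is to start from a pure optimal stationary strategy $\sigma^\ast$ in $\Gamma^e_\lambda$ (which exists for $\Gamma^e$ by the earlier recalled Shapley-equation result) and to perform two successive modifications: a smoothing step that enforces $\varepsilon$-ambiguity, and a \emph{concise} step that guarantees bounded expected $L^1$-variation of the induced belief process $(p_m)$.

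\textbf{Smoothing.} At each state $\omega^e=(p,q,\omega)$, replace $x^\ast:=\sigma^\ast(\omega^e)$ by the convex combination $\tilde x(i|k):=(1-\alpha)x^\ast(i|k)+\alpha\,u(i)$, where $u\in\Delta(I)$ is a fixed non-revealing distribution and $\alpha$ is proportional to $\varepsilon$. A direct Bayes computation using $p^{\tilde x}(k|i)=\tilde x(i|k)p(k)/\bar{\tilde x}^p(i)$ verifies that $\tilde x$ is $\varepsilon$-ambiguous for a suitable choice of $\alpha$. The per-stage payoff deviation from $\sigma^\ast$ is $O(\alpha\|g\|_\infty)$, so the total $\lambda$-discounted loss from this step is $O(\varepsilon\|g\|_\infty)$.

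\textbf{Controlling the $L^1$-variation.} Under any strategy, $(p_m)$ is a $\Delta(K)$-valued martingale (by Bayes), so its quadratic variation satisfies $\sum_{m\ge 1}\m{E}\|p_{m+1}-p_m\|_2^2 \le 1$. Using $\|\cdot\|_1\le\sqrt{|K|}\,\|\cdot\|_2$ and Cauchy--Schwarz gives
\begin{equation*}
\m{E}\sum_{m=1}^T \|p_{m+1}-p_m\|_1 \le \sqrt{|K|\cdot\m{E}[T]},
\end{equation*}
so it suffices to bound $\m{E}[T]\le 9\varepsilon^{-10}$. Naive $\varepsilon$-smoothing is insufficient: if $\tilde\sigma$ becomes nearly non-revealing, $(p_m)$ can stay in the interior of $\Delta(K)\setminus F_\varepsilon$ forever and $T=+\infty$. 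The \emph{concise} modification resolves this by ensuring that whenever $p_m\notin F_\varepsilon$, the action played produces at least $c\,\varepsilon^{10}$ of conditional squared revelation, $\m{E}[\|p_{m+1}-p_m\|_2^2\mid\mathcal{F}_m]\ge c\,\varepsilon^{10}$. Combined with the quadratic-variation bound, this yields $\m{E}[T]\le 1/(c\,\varepsilon^{10})$, via a short argument that first bounds the expected number of stages with $p_m\notin F_\varepsilon$ and then uses a martingale regularity estimate to handle the fact that $T$ is a last-exit time rather than a stopping time.

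\textbf{Near-optimality.} Finally, the concise modification alters the action only on events carrying a small discounted weight. A telescoping argument on the Shapley equation for $v^e_\lambda$, combined with the absorbing structure of the game (once absorbed, actions are payoff-irrelevant, so the forced revelation on non-absorbing stages is cheap), shows that the per-stage cost is $O(\varepsilon\|g\|_\infty)$ and the discounted total loss remains below $12\varepsilon\|g\|_\infty$. The hard part will be reconciling near-optimality (which pushes $\alpha$ toward $0$ so $\tilde\sigma$ stays close to $\sigma^\ast$) with the revelation floor (which requires $\tilde\sigma$ to transmit $\Omega(\varepsilon^{10})$-worth of information per stage). This tension is precisely what the concise construction is designed to resolve.
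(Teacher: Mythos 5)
Your proposal diverges from the paper's argument at two points, and both are genuine gaps rather than alternative routes. First, the smoothing step: mixing $\sigma^\ast$ with a fixed distribution $u\in\Delta(I)$ changes the marginal law of actions $\bar x^p$, hence the absorption dynamics, and in an absorbing game a per-stage perturbation of size $\alpha\sim\varepsilon$ of an optimal $\lambda$-discounted strategy is \emph{not} guaranteed to cost only $O(\varepsilon\left\|g\right\|_\infty)$ uniformly in $\lambda$: in the Big Match the optimal strategy absorbs with probability of order $\lambda$ per stage, so any fixed-$\varepsilon$ contamination swamps it and can lose a constant. The paper avoids this by defining the silent mapping to mix with $\bar x^p(\cdot)$ itself, which preserves the action marginal exactly (Proposition \ref{prop_conservation}); and even with marginals preserved, the near-optimality of the modified strategy is the hardest part of the section: it requires the convexification/transport properties and the absorbing structure (Lemma \ref{lemma:future_payoff}, Proposition \ref{opt_concise}), precisely because the perturbation correlates posterior beliefs with future states and a stage-by-stage payoff comparison is insufficient. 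Your one-line "per-stage deviation is $O(\alpha\left\|g\right\|_\infty)$, so the discounted loss is $O(\varepsilon\left\|g\right\|_\infty)$" does not survive this.

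Second, and more fundamentally, your plan to bound the $L^1$-variation via $\m{E}[T]\le 9\varepsilon^{-10}$ and a "revelation floor" is not what conciseness does, and cannot work. Under a near-optimal strategy $T$ need not have finite expectation at all: non-revealing play keeps $p_m$ constant in the interior forever (and then the sum in question is $0$, so no bound on $T$ is needed). Conversely, forcing $\m{E}[\left\|p_{m+1}-p_m\right\|_2^2\mid\mathcal F_m]\ge c\,\varepsilon^{10}$ whenever $p_m\notin F_\varepsilon$ drives the belief to the $\varepsilon$-frontier in finitely many stages in expectation, i.e.\ forces Player 1 to essentially rule out a type; in games where optimal play is non-revealing this costs a constant, not $O(\varepsilon)$, no matter how small the floor is — the tension you flag at the end is not "resolved by the concise construction", it is fatal to this route. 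The paper's mechanism (Definition \ref{def_concise}, Lemma \ref{jump}, Proposition \ref{variation_1_e}) is different: an $\varepsilon$-concise action creates a dichotomy in which non-revealing actions move the belief by at most about $2\varepsilon^{-1}\bar x^p(R[x,\varepsilon,p])$ while revealing actions move it by at least $\varepsilon^2$ (when $p\notin F_\varepsilon$), which yields the pointwise self-bounding estimate $\m{E}[\left\|p_{m+1}-p_m\right\|_1\mid h_m]\le 3\varepsilon^{-5}\,\m{E}[\left\|p_{m+1}-p_m\right\|_1^2\mid h_m]$ on the relevant event; summing and invoking the bounded $L^2$-variation of the bounded martingale $(p_m)$ gives $3\sqrt{|K|}\,\varepsilon^{-5}$ with no control of $T$ whatsoever. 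Your martingale and Cauchy--Schwarz preliminaries are fine, but the two pillars of your plan (cheap smoothing by a fixed $u$, and a revelation floor bounding $\m{E}[T]$) are each unsound, so the proof does not go through as proposed.
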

The reader that wants first to know more about the role held by the above proposition in the proof of Theorem \ref{main_thm} can jump directly to Section \ref{sec:coupling}. 

The $\varepsilon$-ambiguous property is rather easy to obtain, and the difficult part is to ensure the $L^1$-bounded variation condition. With regards to the latter

Again, in what follows, a general stochastic game with incomplete information on both sides
$\Gamma=(K,L,\Omega,I,J,\rho,g)$  is considered, and the absorbing assumption is precised when needed. 
\begin{definition}
Let $\varepsilon>0$, and $x \in \Delta(I)^K$. An action $i \in I$ is \textit{$(x,\varepsilon)$-non-revealing} at $p$ if $\bar{x}^p(i) \neq 0$ and for all $k \in K$, we have
\begin{equation*}
(1-\varepsilon) \bar{x}^p(i) \leq x(i|k) \leq (1+\varepsilon) \bar{x}^p(i). 
\end{equation*}
An action such that $\bar{x}^p(i) \neq 0$ and $i$ is not $(x,\varepsilon)$-non-revealing at $p$ is called \textit{$(x,\varepsilon)$-revealing at $p$}. 
The set of $(x,\varepsilon)$-non-revealing actions at $p$ is denoted by $NR[x,\varepsilon,p]$, and the set of $(x,\varepsilon)$-revealing actions at $p$ is denoted by $R[x,\varepsilon,p]$.
\end{definition}
It is well-known that bounded martingales have bounded $L^2$-variation, hence can not make too many significant ``jumps": formally, for each $\varepsilon>0$ and $(\omega^e, \sigma, \tau) \in \Omega^e \times \Sigma^e \times T^e$, 
\\
$\m{E}^e_{\omega^e,\sigma,\tau}\left(\sum_{m=1}^{+\infty} 1_{\left\|p_{m+1}-p_m\right\| \geq \varepsilon} \left\|p_{m+1}-p_m\right\|_1\right)$ is bounded independently of $(\omega^e,\sigma,\tau)$. 
Hence, to obtain bounded $L^1$-variation, a first naive attempt would be to consider a set of mixed actions $x$ for Player 1 such that either $x$ reveals a significant amount of information about the type, or it does not reveal any: this would lead to the condition ``for all $i \in NR[x,\varepsilon,p]$ and $k,k' \in K$, $x(i|k)=x(i|k')$", where $p$ is the current type belief. Unfortunately, such a definition is too demanding, and it may be that all mixed actions in this set are significantly suboptimal. Instead, we will consider mixed actions $x$ such that \textit{conditional to $i \in NR[x,\varepsilon,p]$}, the probability of playing $i$ does not depend on $k$. 


This leads to the following definition: 
\begin{definition} \label{def_concise}
Let $\varepsilon>0$. 
A mixed action $x \in X$ is $\varepsilon$-\textit{concise} at $p \in \Delta(K)$ if it satisfies the following property:
for all $i \in NR[x,\varepsilon,p]$, for all $k \in K$,
\begin{equation*}
x(i|k)=\frac{x(NR[x,\varepsilon,p]|k)}{\bar{x}^p(NR[x,\varepsilon,p])} \bar{x}^p(i).
\end{equation*}
\end{definition}
\begin{definition}
Let $\varepsilon >0$. 
A strategy $\sigma$ of Player 1 is \textit{$\varepsilon$-concise} if for any history $h=(h',p,q,\omega)$, $\sigma(h)$ is $\varepsilon$-concise at $p$. 
\end{definition}
\begin{remark}
A stationary strategy $\sigma: \Omega^e \rightarrow X$ is $\varepsilon$-concise if for any $(p,q,\omega) \in \Omega^e$, $\sigma(p,q,\omega)$ is $\varepsilon$-concise at $p$. 
\end{remark}
The next subsection proves existence of approximately optimal strategies for Player 1 that are $\varepsilon$-concise and $\varepsilon$-ambiguous, in the absorbing framework. The last subsection proves that such strategies generate a belief process $(p_m)$ that satisfies the inequality of Proposition \ref{prop:bounded_var}, even without the absorbing assumption. Combining both results prove Proposition \ref{prop:bounded_var}.
\subsection{Existence of approximately optimal concise strategies}	
The main result of this section is the following proposition:
\begin{proposition} \label{prop:opt_concise}
Let $\Gamma=(K,L,\Omega,I,J,\rho,g)$ be an absorbing game with incomplete information on both sides. Let $\varepsilon \in(0,1/4]$ and $\lambda \in (0,1]$. There exists a $12 \varepsilon \left\| g \right\|_\infty$-optimal stationary strategy in $\Gamma^e_\lambda$ 
that is $\varepsilon$-concise and $\varepsilon$-ambiguous. 
\end{proposition}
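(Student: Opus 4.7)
The plan is to build $\tilde\sigma$ by pointwise modifying an optimal pure stationary strategy $\sigma^*$ of $\Gamma^e_\lambda$ (granted by Proposition~\ref{prop_recursive}\eqref{prop:stationary_e}) in two steps. At every state $\omega^e=(p,q,\omega)$, write $x^*:=\sigma^*(\omega^e)$ and first \emph{ambiguate}: set $x^\alpha(i\mid k):=(1-\varepsilon)x^*(i\mid k)+\varepsilon\bar{x^*}^p(i)$. This preserves the marginal $\bar{x^\alpha}^p=\bar{x^*}^p$ and forces $x^\alpha(i\mid k)\ge\varepsilon\bar{x^*}^p(i)$, whence $p^{x^\alpha}(k\mid i)\ge\varepsilon p(k)\ge\varepsilon\min_{k'}p(k')$, so $x^\alpha$ is $\varepsilon$-ambiguous. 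Second, \emph{condense} the non-revealing coordinates of $x^\alpha$: for $i\in R[x^\alpha,\varepsilon,p]$ keep $\tilde x(i\mid k):=x^\alpha(i\mid k)$, and for $i\in NR[x^\alpha,\varepsilon,p]$ set $\tilde x(i\mid k):=\frac{x^\alpha(NR\mid k)}{\bar{x^\alpha}^p(NR)}\,\bar{x^\alpha}^p(i)$. A routine computation gives $\bar{\tilde x}^p=\bar{x^\alpha}^p$, $NR[\tilde x,\varepsilon,p]=NR[x^\alpha,\varepsilon,p]$ and $\tilde x(NR\mid k)=x^\alpha(NR\mid k)\ge\varepsilon\bar{x^\alpha}^p(NR)$, so $\tilde x$ is simultaneously $\varepsilon$-concise at $p$ and $\varepsilon$-ambiguous; quantitatively, $\sum_i|\tilde x(i\mid k)-x^*(i\mid k)|\le 4\varepsilon$ for every $k$. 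Let $\tilde\sigma$ be the stationary strategy $\omega^e\mapsto\tilde x(\omega^e)$.

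To prove that $\tilde\sigma$ is $12\varepsilon\|g\|_\infty$-optimal, I would compare the one-stage Shapley expression $H(x,y):=\lambda g^e(\omega^e,x,y)+(1-\lambda)\mathbb{E}^e_{\omega^e,x,y}(v^e_\lambda)$ at $x=\tilde x$ with its value $v^e_\lambda(\omega^e)$ achieved at $x=x^*$. The stage-payoff gap is at most $4\varepsilon\|g\|_\infty$ by the $L^1$ bound above. For the continuation term, the absorbing structure is exploited by splitting $\mathbb{E}^e_{\omega^e,x,y}(v^e_\lambda)$ into its absorbing and non-absorbing parts. The absorbing part, since $v^e_\lambda(\cdot,\cdot,\omega^a)$ is bilinear in $(p',q')$, rewrites as $\sum_{k,\ell}p(k)q(\ell)\sum_a g(k,\ell,\omega^a)\sum_{i,j}x(i\mid k)y(j\mid\ell)\,\rho(\omega^a\mid\omega^0,i,j)$, a quantity that depends on $x$ only through $x(i\mid k)$, so it changes by at most $O(\varepsilon\|g\|_\infty)$ via the $L^1$ bound. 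For the non-absorbing part, $p^{\tilde x}(\cdot\mid i)$ with $i\in NR$ equals the convex average $\sum_{i'\in NR}\tfrac{\bar{x^\alpha}^p(i')}{\bar{x^\alpha}^p(NR)}\,p^{x^\alpha}(\cdot\mid i')$, and concavity of $v^e_\lambda(\cdot,q',\omega^0)$ in $p$ yields a one-sided Jensen inequality that makes the replacement $x^\alpha\leadsto\tilde x$ nonnegative on the Jensen term; the residual produced by the $i$-dependence of the staying probability $\rho(\omega^0\mid\omega^0,i,j)=1-\sum_a\rho(\omega^a\mid\omega^0,i,j)$ is re-expressed through the absorbing probabilities and merged into the bounded absorbing contribution. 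Assembled, these estimates yield $H(\tilde x,y)\ge v^e_\lambda(\omega^e)-12\lambda\varepsilon\|g\|_\infty$ for every $y\in Y$.

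The conclusion then follows from the standard $\lambda$-discounted fixed-point estimate: a stationary strategy whose per-stage loss in the Shapley operator is at most $\delta$ guarantees at least $v^e_\lambda-\delta/\lambda$, and with $\delta=12\lambda\varepsilon\|g\|_\infty$ this gives the $12\varepsilon\|g\|_\infty$-optimality of $\tilde\sigma$. The delicate point is precisely the continuation estimate. A naive Lipschitz bound on the non-absorbing continuation produces per-stage Shapley loss of order $\varepsilon\|g\|_\infty$, which compounds to the useless $\varepsilon\|g\|_\infty/\lambda$ total; obtaining the correct $\lambda\varepsilon\|g\|_\infty$ per-stage bound requires both the one-sided concavity/Jensen argument in $p$ and the absorbing structure, via which the $i$-dependent residuals are redirected into bilinear absorbing contributions that are harmless. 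This is the principal technical obstacle, and the reason the absorbing hypothesis enters Proposition~\ref{prop:opt_concise} in an essential way.
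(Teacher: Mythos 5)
Your construction step is sound and is essentially the paper's: your ``ambiguate then condense'' modification coincides, up to the calibration of constants, with the $\varepsilon_0$-silent mapping $c_{\varepsilon_0}$ (with $\varepsilon_0=\frac{1-\sqrt{1-4\varepsilon}}{2}$) used in Propositions \ref{prop_concise_mapping} and \ref{prop_conv}, and your claims that the marginals $\bar{x}^p$ are preserved, that the non-revealing set is unchanged, that $\tilde{x}$ is $\varepsilon$-concise and $\varepsilon$-ambiguous, and that actions and posteriors move by $O(\varepsilon)$ in $L^1$, all check out.

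The optimality argument, however, has a genuine gap: the asserted per-stage bound $H(\tilde{x},y)\geq v^e_\lambda(\omega^e)-12\lambda\varepsilon\|g\|_{\infty}$ for all $y$ is false in general. The absorbing contribution to the continuation term, which you yourself only bound by $O(\varepsilon\|g\|_{\infty})$ via the $L^1$ estimate on $x(\cdot|k)$, enters $H$ with weight $(1-\lambda)$ times the absorption probability, not with weight $\lambda$; when absorption under $(x^*,y)$ is likely, replacing $x^*$ by $\tilde{x}$ can shift this bilinear term by $\Theta(\varepsilon\|g\|_{\infty})$ in the unfavorable direction, and the Jensen inequality on the non-absorbing part offers nothing to offset it (it only controls the term weighted by the staying probability). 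So the true per-stage Shapley loss is $\Theta(\varepsilon\|g\|_{\infty})$, and the standard conversion ``per-stage loss $\delta$ implies guarantee $v^e_\lambda-\delta/\lambda$'' then yields only the useless $O(\varepsilon\|g\|_{\infty}/\lambda)$. The way the absorbing hypothesis actually saves the day is different: the $\Theta(\varepsilon)$ loss is charged exclusively to the absorption event, which occurs at most once along a play, and after absorption the modified strategy loses nothing more. To exploit this you must compare the genuine continuation payoff $\gamma^e_\lambda(\cdot,\sigma',\tau)$ of the modified strategy (not $v^e_\lambda$ again) with $v^e_\lambda$, which is exactly what Lemma \ref{lemma:future_payoff} does (with $u=\gamma^e_\lambda(\cdot,\sigma',\tau)$, $v=v^e_\lambda$), and then close the estimate through the fixed-point inequality $D\geq-\lambda\varepsilon\|g\|_{\infty}+(1-\lambda)\bigl(DR-2(1-R)\varepsilon\|g\|_{\infty}\bigr)$ in the worst gap $D$ and the staying probability $R$, as in the proof of Proposition \ref{opt_concise}. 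Your proposal is missing precisely this accounting step, which is the heart of the proposition and the point stressed in the remark following Proposition \ref{opt_concise}.
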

Again, a general stochastic game with incomplete information on both sides $\Gamma=(K,L,\Omega,I,J,\rho,g)$ is considered, and the absorbing assumption is precised when needed. 

The idea of the proof is to transform a stationary optimal strategy of Player 1 in $\Gamma^e_\lambda$ into an $\varepsilon$-concise and $\varepsilon$-ambiguous stationary strategy that is  $12 \varepsilon\left\|g\right\|_{\infty}$-optimal in $\Gamma^e_\lambda$. Such a transformation relies on the following object:
\begin{definition}
Let $\varepsilon \in (0,1]$. The \textit{$\varepsilon$-silent mapping} is the function $c_{\varepsilon}: X \times \Delta(K) \rightarrow X$ defined by:
$$
\forall (x,p) \in X \times \Delta(K), \quad [c_\varepsilon(x,p)](i|k) := \left\{
    \begin{array}{ll}
       \left[1-\varepsilon) \frac{x(NR[x,\varepsilon,p]|k)}{\bar{x}^p(NR[x,\varepsilon,p])} +\varepsilon \right]\bar{x}^p(i) & \mbox{when} \ i \in NR[x,\varepsilon,p]  \\
       (1-\varepsilon) x(i|k)+\varepsilon \bar{x}^p(i) & \mbox{otherwise.}
    \end{array}
\right.
$$
\end{definition}
The first property of this mapping is to preserve action distributions: 
\begin{proposition} \label{prop_conservation}
Let $\varepsilon \in (0,1]$, $p \in \Delta(K)$, $x \in X$ and $x':=c_{\varepsilon}(x,p)$. Then for all $i \in I$, $\overline{x'}^p(i)=\bar{x}^p(i)$.
\end{proposition}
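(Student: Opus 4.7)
The plan is to verify the identity $\overline{x'}^p(i)=\bar x^p(i)$ by direct computation, treating the two cases of the piecewise definition of $c_\varepsilon$ separately. The only quantities that need tracking are $\sum_{k\in K}p(k)\,x'(i\mid k)$ on the one hand, and on the other the normalizations $\sum_{k\in K}p(k)=1$ and $\sum_{k\in K}p(k)\,x(NR[x,\varepsilon,p]\mid k)=\bar x^p(NR[x,\varepsilon,p])$, which is simply the definition of $\bar x^p$ applied to the set $NR[x,\varepsilon,p]$. Nothing in the argument uses the absorbing hypothesis or any property of the payoff function.

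For $i\in NR[x,\varepsilon,p]$, expanding the definition gives
\begin{equation*}
\overline{x'}^p(i)=\sum_{k\in K}p(k)\left[(1-\varepsilon)\frac{x(NR[x,\varepsilon,p]\mid k)}{\bar x^p(NR[x,\varepsilon,p])}+\varepsilon\right]\bar x^p(i).
\end{equation*}
Factor $\bar x^p(i)$ out of the sum. The first term contributes $(1-\varepsilon)\bar x^p(NR[x,\varepsilon,p])/\bar x^p(NR[x,\varepsilon,p])=1-\varepsilon$ after using the identity $\sum_k p(k)x(NR[x,\varepsilon,p]\mid k)=\bar x^p(NR[x,\varepsilon,p])$ (here one needs $\bar x^p(NR[x,\varepsilon,p])\neq 0$, which holds whenever $NR[x,\varepsilon,p]$ contains an action $i$ with $\bar x^p(i)\neq 0$—which it does by the definition of $NR$). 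The second term contributes $\varepsilon$, and the sum equals $\bar x^p(i)$, as required.

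For $i\notin NR[x,\varepsilon,p]$, the computation is even simpler:
\begin{equation*}
\overline{x'}^p(i)=\sum_{k\in K}p(k)\bigl[(1-\varepsilon)x(i\mid k)+\varepsilon\bar x^p(i)\bigr]=(1-\varepsilon)\bar x^p(i)+\varepsilon\bar x^p(i)=\bar x^p(i).
\end{equation*}
This handles both $(x,\varepsilon)$-revealing actions and actions with $\bar x^p(i)=0$ uniformly (in the latter case, both sides are zero).

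There is no real obstacle here; the only mildly subtle point is the degenerate case where $\bar x^p(NR[x,\varepsilon,p])=0$, which however forces $NR[x,\varepsilon,p]=\emptyset$ and makes the first case vacuous, so the definition of $c_\varepsilon$ is unambiguous and the proof above covers every $i$. After finishing the two case analyses, the statement of the proposition follows immediately, and in passing one can also record that $x'\in X$ (i.e., $\sum_i x'(i\mid k)=1$ for each $k$) by summing the same expressions over $i\in I$ and using $\sum_i\bar x^p(i)=1$, though this is not needed for the present statement.
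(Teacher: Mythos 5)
Your proof is correct and follows essentially the same route as the paper: a direct two-case computation, with the non-revealing case using $\sum_k p(k)\,x(NR[x,\varepsilon,p]\mid k)=\bar x^p(NR[x,\varepsilon,p])$ so that the bracket averages to $1$, and the other case being immediate. The remark about the degenerate case $\bar x^p(NR[x,\varepsilon,p])=0$ is a harmless extra observation that does not change the argument.
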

\begin{proof}
Assume $i \in NR[x,\varepsilon,p]$. We have 
\begin{eqnarray*}
\bar{x'}^p(i)&=&\sum_{k \in K}  \left[(1-\varepsilon) \frac{x(NR[x,\varepsilon,p]|k)}{\bar{x}^p(NR[x,\varepsilon,p])} +\varepsilon \right]\bar{x}^p(i) p(k)
\\
&=&\left[1-\varepsilon+\varepsilon \right] \bar{x}^p(i)=\bar{x}^p(i). 
\end{eqnarray*}
Assume $i \in I \setminus NR[x,\varepsilon,p]$. We have
\begin{eqnarray*}
\bar{x'}^p(i)=\sum_{k \in K} x'(i|k) p(k)=\sum_{k \in K}[(1-\varepsilon) x(i|k)+\varepsilon \bar{x}^p(i)]p(k)=(1-\varepsilon) \bar{x}^p(i)+\varepsilon  \bar{x}^p(i)=\bar{x}^p(i).
\end{eqnarray*}
\end{proof}
The second property states that these mappings transform mixed actions into $\varepsilon$-concise and $\varepsilon$-ambiguous mixed actions: 
\begin{proposition} \label{prop_concise_mapping}
Let $\varepsilon \in (0,1/4]$ and $\varepsilon_0:=\frac{1-\sqrt{1-4\varepsilon}}{2}$.   Let $p \in \Delta(K)$, $x \in X$ and $x':=c_{\varepsilon_0}(x,p)$. Then $NR[x',\varepsilon,p] = NR[x,\varepsilon_0,p]$, and moreover, $x'$ is $\varepsilon$-concise and $\varepsilon$-ambiguous at $p$. 
\end{proposition}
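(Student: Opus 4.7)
The whole proof rests on the defining identity $\varepsilon_0(1-\varepsilon_0)=\varepsilon$ (equivalently $\varepsilon_0-\varepsilon_0^2=\varepsilon$), which follows directly from $\varepsilon_0=\frac{1-\sqrt{1-4\varepsilon}}{2}$. Note also that for $\varepsilon\in(0,1/4]$ we have $\varepsilon_0\in(0,1/2]$, so $\varepsilon=\varepsilon_0(1-\varepsilon_0)\leq \varepsilon_0$, which I will use at the very end. Throughout, I write $N:=NR[x,\varepsilon_0,p]$ and use that Proposition \ref{prop_conservation} gives $\bar{x'}^p(i)=\bar{x}^p(i)$ for every $i\in I$, so in particular $\{\bar{x'}^p(i)\neq 0\}=\{\bar{x}^p(i)\neq 0\}$.

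\textbf{Step 1 ($N\subseteq NR[x',\varepsilon,p]$).} For $i\in N$ and any $k$, sum the defining inequalities $(1-\varepsilon_0)\bar{x}^p(i')\leq x(i'|k)\leq (1+\varepsilon_0)\bar{x}^p(i')$ over $i'\in N$ to obtain
\begin{equation*}
1-\varepsilon_0 \;\leq\; \alpha_k:=\frac{x(N|k)}{\bar{x}^p(N)}\;\leq\;1+\varepsilon_0.
\end{equation*}
The formula for $c_{\varepsilon_0}$ on $N$ then reads $x'(i|k)=\bigl[(1-\varepsilon_0)\alpha_k+\varepsilon_0\bigr]\bar{x}^p(i)$. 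Evaluating the affine function $\alpha\mapsto (1-\varepsilon_0)\alpha+\varepsilon_0$ at the endpoints $1\pm\varepsilon_0$ and using the identity $\varepsilon_0-\varepsilon_0^2=\varepsilon$ gives the values $1-\varepsilon$ and $1+\varepsilon$, hence $(1-\varepsilon)\bar{x}^p(i)\leq x'(i|k)\leq (1+\varepsilon)\bar{x}^p(i)=(1+\varepsilon)\bar{x'}^p(i)$, so $i\in NR[x',\varepsilon,p]$.

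\textbf{Step 2 (reverse inclusion).} Suppose $i\notin N$ with $\bar{x}^p(i)\neq 0$. Then some $k^*$ violates one of the two $\varepsilon_0$-bounds. On the other branch of $c_{\varepsilon_0}$, $x'(i|k^*)=(1-\varepsilon_0)x(i|k^*)+\varepsilon_0\bar{x}^p(i)$, and a direct computation using the same identity shows that if $x(i|k^*)>(1+\varepsilon_0)\bar{x}^p(i)$ then $x'(i|k^*)>(1+\varepsilon)\bar{x}^p(i)$, and symmetrically for the lower bound; hence $i\notin NR[x',\varepsilon,p]$. The case $\bar{x}^p(i)=0$ is immediate from $\bar{x'}^p(i)=0$. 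Combined with Step 1, this gives $NR[x',\varepsilon,p]=N$.

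\textbf{Step 3 (conciseness and ambiguity).} For conciseness, summing the formula of $c_{\varepsilon_0}$ over $i\in N$ yields $x'(N|k)=\bigl[(1-\varepsilon_0)\alpha_k+\varepsilon_0\bigr]\bar{x}^p(N)$, so the bracketed factor in $x'(i|k)=\bigl[(1-\varepsilon_0)\alpha_k+\varepsilon_0\bigr]\bar{x'}^p(i)$ equals $x'(N|k)/\bar{x'}^p(N)$, which is exactly the relation in Definition \ref{def_concise} applied to $x'$. For ambiguity, both branches of the definition of $c_{\varepsilon_0}$ display $x'(i|k)$ as an $\varepsilon_0$-convex combination in which $\bar{x}^p(i)$ appears with coefficient at least $\varepsilon_0$, giving $x'(i|k)\geq \varepsilon_0\bar{x}^p(i)\geq \varepsilon\bar{x}^p(i)$ (since $\varepsilon\leq\varepsilon_0$). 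When $\bar{x'}^p(i)=\bar{x}^p(i)\neq 0$, this yields $p^{x'}(k|i)=\frac{x'(i|k)p(k)}{\bar{x}^p(i)}\geq \varepsilon\, p(k)\geq \varepsilon\min_{k'}p(k')$; the case $\bar{x'}^p(i)=0$ is handled by the convention $p^{x'}(k|i)=p(k)$. There is no genuine obstacle beyond keeping the algebra straight; the whole argument is a concentrated use of the quadratic identity $\varepsilon_0-\varepsilon_0^2=\varepsilon$ on the two branches of $c_{\varepsilon_0}$.
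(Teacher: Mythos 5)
Your proof is correct and follows essentially the same route as the paper's: both inclusions come from the identity $(1-\varepsilon_0)(1\pm\varepsilon_0)+\varepsilon_0=1\pm\varepsilon$ applied to the two branches of $c_{\varepsilon_0}$ together with $\bar{x'}^p=\bar{x}^p$ from Proposition \ref{prop_conservation}, and the conciseness and ambiguity checks are the same ratio computation and the bound $x'(i|k)\geq \varepsilon_0\bar{x}^p(i)\geq\varepsilon\bar{x}^p(i)$ used in the paper. Your explicit treatment of the $\bar{x}^p(i)=0$ case and the summation bound on $\alpha_k$ are only minor presentational additions.
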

\begin{proof}
Let us first prove that $NR[x',\varepsilon,p] \subset NR[x,\varepsilon_0,p]$, which is equivalent to $R[x,\varepsilon_0,p] \subset R[x',\varepsilon,p]$. Indeed, let $i \in R[x,\varepsilon_0,p]$, hence $x'(i|k)=(1-\varepsilon_0) x(i|k)+\varepsilon_0 \bar{x}^p(i)$. There exists $k \in K$ such that either 
\begin{equation*}
x(i|k) < (1-\varepsilon_0) \bar{x}^p(i) \quad \text{or} \quad x(i|k) > (1+\varepsilon_0) \bar{x}^p(i),
\end{equation*}
which implies
\begin{equation*}
x'(i|k) < [(1-\varepsilon_0)^2+\varepsilon_0] \bar{x}^p(i) \quad \text{or} \quad x'(i|k) > [(1-\varepsilon_0)(1+\varepsilon_0)+\varepsilon_0] \bar{x}^p(i). 
\end{equation*}
Given that $(1-\varepsilon_0)^2+\varepsilon_0=1-\varepsilon$ and $(1-\varepsilon_0)(1+\varepsilon_0)+\varepsilon_0=1+\varepsilon$, this implies
\begin{equation*}
x'(i|k) < (1-\varepsilon) \bar{x}^p(i) \quad \text{or} \quad x'(i|k) > (1+\varepsilon) \bar{x}^p(i),
\end{equation*}
which yields $i \in R[x',\varepsilon,p]$. Conversely, let $i \in NR[x,\varepsilon_0,p]$. We have
\begin{eqnarray*}
x'(i|k)&=&\left[(1-\varepsilon_0) \frac{x(NR[x,\varepsilon_0,p]|k)}{\bar{x}^p(NR[x,\varepsilon_0,p])} +\varepsilon_0 \right]\bar{x}^p(i)
\\
&\leq& \left[(1-\varepsilon_0)(1+\varepsilon_0) +\varepsilon_0 \right] \bar{x}^p(i)
\\\
&=& (1+\varepsilon) \bar{x}^p(i)=(1+\varepsilon) \overline{x'}^p(i).
\end{eqnarray*}
Similarly, we have $x'(i|k) \geq \left[(1-\varepsilon_0)(1-\varepsilon_0) +\varepsilon_0 \right] \bar{x}^p(i)=(1-\varepsilon) \overline{x'}^p(i)$, and $i \in NR[x',\varepsilon,p]$. We have thus proven that 
$NR[x',\varepsilon,p] = NR[x,\varepsilon_0,p]$. 

Let us prove that $x'$ is $\varepsilon$-concise at $p$. 
Let $i \in NR[x',\varepsilon,p]=NR[x,\varepsilon_0,p]$, we have
\begin{eqnarray*}
\frac{x'(NR[x',\varepsilon,p]|k)}{\overline{x'}^p(NR[x',\varepsilon,p])} \bar{x'}^p(i)&=& \frac{x'(NR[x',\varepsilon,p]|k)}{\overline{x}^p(NR[x',\varepsilon,p])} \bar{x}^p(i)\\
&=& \frac{\sum_{i' \in NR[x',\varepsilon,p]}  \left[(1-\varepsilon_0) \frac{x(NR[x,\varepsilon_0,p]|k)}{\bar{x}^p(NR[x,\varepsilon_0,p])} +\varepsilon_0\right] \bar{x}^p(i')}{\bar{x}^p(NR[x',\varepsilon,p])}\bar{x}^p(i)
\\
&=& \left[(1-\varepsilon_0) \frac{x(NR[x,\varepsilon_0,p]|k)}{\bar{x}^p(NR[x,\varepsilon_0,p])} +\varepsilon_0\right] \bar{x}^p(i)
\\
&=&x'(i|k).
\end{eqnarray*}
Moreover, for all $k \in K$ and $i \in I$,
\begin{equation*}
x'(i|k) \geq \varepsilon_0 \bar{x}^p(i) \geq \varepsilon \bar{x}^p(i), 
\end{equation*} 
hence $p^{x'}(k|i) \geq \varepsilon p(k)$, 
and $x'$ is $\varepsilon$-ambiguous at $p$. 
\end{proof}
The next two propositions explain how posterior beliefs are affected by the concise action mapping: 
\begin{proposition} \label{prop_comb}
Let $\varepsilon \in (0,1]$, $x \in X$, $p \in \Delta(K)$ and $x':=c_\varepsilon(x,p)$. Then 
$$
p^{x'}(.|i) = \left\{
    \begin{array}{ll}
      \displaystyle   (1-\varepsilon) \cdot \sum_{i' \in NR[x,\varepsilon,p]} \frac{\bar{x}^p(i')}{\bar{x}^p(NR[x,\varepsilon,p])} \cdot p^{x}(.|i')+\varepsilon \cdot p & \mbox{when} \ i \in NR[x,\varepsilon,p]  \\ 
        (1-\varepsilon) \cdot p^{x}(.|i)+\varepsilon \cdot p& \mbox{otherwise.}
    \end{array}
\right.
$$
\end{proposition}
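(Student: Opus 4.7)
The plan is to verify Proposition \ref{prop_comb} by a direct substitution into Bayes' formula, treating the two cases of the definition of $c_\varepsilon$ separately. The starting observation is that by Proposition \ref{prop_conservation}, $\overline{x'}^p(i) = \bar{x}^p(i)$ for every $i \in I$, so whenever $\bar{x}^p(i) \neq 0$ we have
\begin{equation*}
p^{x'}(k|i) = \frac{x'(i|k)\, p(k)}{\overline{x'}^p(i)} = \frac{x'(i|k)\, p(k)}{\bar{x}^p(i)}.
\end{equation*}
(When $\bar{x}^p(i) = 0$, both sides of the claimed identity reduce to $p$ by the convention in the definitions, so this case is immediate.)

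For $i \in R[x,\varepsilon,p]$, I would just plug in $x'(i|k) = (1-\varepsilon) x(i|k) + \varepsilon \bar{x}^p(i)$ to get
\begin{equation*}
p^{x'}(k|i) = (1-\varepsilon)\frac{x(i|k)\, p(k)}{\bar{x}^p(i)} + \varepsilon\, p(k) = (1-\varepsilon)\, p^{x}(k|i) + \varepsilon\, p(k),
\end{equation*}
which is exactly the second line of the claimed formula.

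For $i \in NR[x,\varepsilon,p]$, substituting the expression for $x'(i|k)$ from the definition of the $\varepsilon$-silent mapping yields
\begin{equation*}
p^{x'}(k|i) = (1-\varepsilon) \frac{x(NR[x,\varepsilon,p]|k)\, p(k)}{\bar{x}^p(NR[x,\varepsilon,p])} + \varepsilon\, p(k),
\end{equation*}
since the factor $\bar{x}^p(i)$ cancels. The key algebraic move is then the rewriting of Bayes' formula as $x(i'|k)\, p(k) = p^{x}(k|i')\, \bar{x}^p(i')$ for every $i' \in I$ with $\bar{x}^p(i') \neq 0$. Summing this identity over $i' \in NR[x,\varepsilon,p]$ gives
\begin{equation*}
x(NR[x,\varepsilon,p]|k)\, p(k) = \sum_{i' \in NR[x,\varepsilon,p]} p^{x}(k|i')\, \bar{x}^p(i'),
\end{equation*}
and inserting this back recovers the convex combination displayed in the first line of the statement.

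There is no real obstacle here: the result is essentially a bookkeeping exercise showing that $c_\varepsilon$ acts as an affine mixing of posteriors with weight $\varepsilon$ on the prior $p$ (and, in the non-revealing case, averages the non-revealing posteriors according to their $\bar{x}^p$-weights). The only mild care needed is to discard indices $i'$ with $\bar{x}^p(i') = 0$ in the sum, which contribute nothing on either side and so do not affect the identity.
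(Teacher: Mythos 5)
Your proposal is correct and follows essentially the same route as the paper's proof: a direct Bayes substitution using $\overline{x'}^p(i)=\bar{x}^p(i)$, with the same case split between $NR[x,\varepsilon,p]$ and the rest, and the same rewriting $x(i'|k)p(k)=\bar{x}^p(i')\,p^x(k|i')$ to produce the convex combination, plus the same observation that the convention handles $\bar{x}^p(i)=0$. (A minor remark: indices with $\bar{x}^p(i')=0$ cannot occur in the sum, since membership in $NR[x,\varepsilon,p]$ already requires $\bar{x}^p(i')\neq 0$.)
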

\begin{proof}
Let $i \in NR[x,\varepsilon,p]$ and $k \in K$. We have
\begin{equation*}
p^x(k|i)=\frac{x(i|k)p(k)}{\bar{x}^p(i)},
\end{equation*}
and
\begin{eqnarray*}
p^{x'}(k|i)&=&\frac{x'(i|k)p(k)}{\bar{x}^p(i)}
\\
&=&
(1-\varepsilon) \frac{x(NR[x,\varepsilon,p]|k)}{\bar{x}^p(NR[x,\varepsilon,p])}p(k)
+\varepsilon p(k)
\\ 
&=& 
(1-\varepsilon)\sum_{i' \in NR[x,\varepsilon,p]} \frac{x(i'|k)p(k)}{\bar{x}^p(NR[x,\varepsilon,p])}
+\varepsilon p(k)
\\
&=&
(1-\varepsilon) \sum_{i' \in NR[x,\varepsilon,p]}  \frac{\bar{x}^p(i')}{\bar{x}^p(NR[x,\varepsilon,p])}  p^x(k|i')
+\varepsilon p(k). 
\end{eqnarray*}
For $i \in R[x,\varepsilon,p]$, $x'(i|k)=(1-\varepsilon)x(i|k)+\varepsilon \bar{x}^p(i)$ for all $k \in K$, hence 
\\
$p^{x'}(.|i)=(1-\varepsilon) \cdot p^x(.|i)+\varepsilon \cdot p$. The last relation also holds when $\bar{x}^p(i)=\overline{x'}^p(i)=0$, since for such $i$, we have taken the convention 
$p^{x'}(.|i)=p^{x}(.|i)=p$. 
\end{proof} 
\begin{proposition} \label{concise_close}
Let $\varepsilon \in (0,1/4]$, $\varepsilon_0:=\frac{1-\sqrt{1-4\varepsilon}}{2}$, $x \in X$, $p \in \Delta(K)$ and $x':=c_{\varepsilon_0}(x,p)$. Then for all $i \in I$, 
\begin{equation*}
\left\|p^{x'}(.|i)-p^x(.|i)\right\|_1 \leq 6\varepsilon.
\end{equation*}
\end{proposition}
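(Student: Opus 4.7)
The plan is to invoke Proposition \ref{prop_comb}, which gives an explicit formula for $p^{x'}(\cdot|i)$ as a convex combination depending on whether $i \in R[x,\varepsilon_0,p]$ or $i \in NR[x,\varepsilon_0,p]$, and then bound the $\ell^1$ distance by triangle inequality in each case. A preliminary observation I would record up front is the numerical estimate $\varepsilon_0 \leq 2\varepsilon$. This follows from the definition of $\varepsilon_0$: squaring gives $\varepsilon = \varepsilon_0(1-\varepsilon_0)$, and the hypothesis $\varepsilon \leq 1/4$ forces $\varepsilon_0 \leq 1/2$, so $1-\varepsilon_0 \geq 1/2$ and hence $\varepsilon_0 = \varepsilon/(1-\varepsilon_0) \leq 2\varepsilon$. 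This single estimate is what ultimately upgrades bounds of order $\varepsilon_0$ into the claimed $6\varepsilon$.

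For the revealing case $i \in R[x,\varepsilon_0,p]$, Proposition \ref{prop_comb} gives $p^{x'}(\cdot|i) = (1-\varepsilon_0) p^x(\cdot|i) + \varepsilon_0 p$, so the difference is $\varepsilon_0(p - p^x(\cdot|i))$ and the $\ell^1$ norm is at most $2\varepsilon_0 \leq 4\varepsilon \leq 6\varepsilon$. This case is immediate.

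The non-revealing case $i \in NR[x,\varepsilon_0,p]$ is the main step. Here Proposition \ref{prop_comb} expresses $p^{x'}(\cdot|i) = (1-\varepsilon_0) q + \varepsilon_0 p$ with $q := \sum_{i' \in NR[x,\varepsilon_0,p]} \frac{\bar{x}^p(i')}{\bar{x}^p(NR[x,\varepsilon_0,p])} p^x(\cdot|i')$, and the key intermediate claim is that for every $i' \in NR[x,\varepsilon_0,p]$ one has $\|p^x(\cdot|i') - p\|_1 \leq \varepsilon_0$. This follows from the defining inequality $(1-\varepsilon_0)\bar{x}^p(i') \leq x(i'|k) \leq (1+\varepsilon_0)\bar{x}^p(i')$, which, after dividing by $\bar{x}^p(i')$ and multiplying by $p(k)$, gives $|p^x(k|i') - p(k)| \leq \varepsilon_0 p(k)$, and summing over $k$ yields $\varepsilon_0$. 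By convexity the same bound transfers to $\|q - p\|_1 \leq \varepsilon_0$. The identity
\begin{equation*}
p^{x'}(\cdot|i) - p^x(\cdot|i) \;=\; (1-\varepsilon_0)(q - p) + \bigl(p - p^x(\cdot|i)\bigr)
\end{equation*}
combined with the triangle inequality then produces $\|p^{x'}(\cdot|i) - p^x(\cdot|i)\|_1 \leq (1-\varepsilon_0)\varepsilon_0 + \varepsilon_0 \leq 2\varepsilon_0$, which by the preliminary estimate is at most $4\varepsilon \leq 6\varepsilon$.

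The only potential pitfall is the edge case $\bar{x}^p(i) = 0$, where by convention $p^{x}(\cdot|i) = p^{x'}(\cdot|i) = p$ and the inequality is trivial; I would flag this at the outset so that the computation in Proposition \ref{prop_comb} can be used without division-by-zero worries. Beyond that, the argument is a bookkeeping exercise around the quadratic relation $\varepsilon = \varepsilon_0(1-\varepsilon_0)$, which is precisely the reason the definition of $c_{\varepsilon_0}$ was calibrated that way.
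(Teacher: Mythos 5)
Your proof is correct, and it takes a slightly different route from the paper's. The paper argues at the level of the mixed actions themselves: for $i \in NR[x,\varepsilon_0,p]$ it bounds the coordinate-wise gap $|x'(i|k)-x(i|k)| \leq 2\varepsilon_0\,\bar{x}^p(i)$ (using the definition of non-revealing actions together with $NR[x',\varepsilon,p]=NR[x,\varepsilon_0,p]$ from Proposition \ref{prop_concise_mapping}), and then converts this into a posterior bound via the identity $\left\|p^{x'}(.|i)-p^x(.|i)\right\|_1=\sum_{k}|x'(i|k)-x(i|k)|\,p(k)/\bar{x}^p(i)$, which is legitimate because $\overline{x'}^p(i)=\bar{x}^p(i)$; this again yields $2\varepsilon_0 \leq 6\varepsilon$. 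You instead go through the posterior formula of Proposition \ref{prop_comb} and measure everything relative to the prior $p$: non-revealing posteriors lie within $\varepsilon_0$ of $p$ in $\ell^1$, convexity transfers this to the averaged posterior, and a triangle inequality through $p$ closes the estimate. The two arguments rest on the same underlying facts and give the same constant (yours even gives $4\varepsilon$), but your version has the merit of explicitly treating the revealing case and the $\bar{x}^p(i)=0$ convention, which the paper's written proof leaves implicit (it only spells out the non-revealing case), and of isolating the clean numerical fact $\varepsilon=\varepsilon_0(1-\varepsilon_0)$, hence $\varepsilon_0\leq 2\varepsilon$, which is indeed the calibration behind $c_{\varepsilon_0}$.
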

\begin{proof}
Let $i \in NR[x,\varepsilon_0,p]=NR[x',\varepsilon,p]$. By definition of $NR[.]$, for all $k \in K$, 
$|x'(i|k)-x(i|k)| \leq 2 \varepsilon_0 \bar{x}^p(i) \leq 6 \varepsilon \bar{x}^p(i)$, hence
\begin{eqnarray*}
\left\|p^{x'}(.|i)-p^x(.|i)\right\|_1&=& \frac{\sum_{k \in K} |x'(i|k)-x(i|k)| p(k)}{\bar{x}^p(i)}
\\
&\leq& 2 \varepsilon_0 \leq 6 \varepsilon. 
\end{eqnarray*}
%
\end{proof}
Images of mixed actions by silent mappings have hence many interesting properties, that are summarized in the following definition: 
\begin{definition}
Let $\varepsilon>0$, $p \in \Delta(K)$, and $x \in X$. We say that $x' \in X$ is an \textit{$\varepsilon$-convexification} of $x$ at $p$ if there exists $(\beta_{ii'}) \in \m{R}_+^{I \times I}$ such that all the following conditions are satisfied:
\begin{enumerate}
\item
\label{convex_combination}
\begin{equation*}
\forall i \in I, \quad p^{x'}(.|i)=\sum_{i'} \beta_{ii'} p^{x}(.|i')
\end{equation*}
\item
\label{mass_conservation}
\begin{equation*}
\forall i \in I, \quad \bar{x'}^p(i)=\bar{x}^p(i)
\end{equation*}
\item
\label{opt_transport}
\begin{equation*}
\forall i' \in I', \quad \sum_{i \in I} \beta_{ii'} \bar{x}^p(i)=\bar{x}^p(i')
\end{equation*}
\item
\label{close_belief}
\begin{equation*}
\forall i \in I, \quad
\left\|p^{x'}(.|i)-p^{x}(.|i)\right\|_1 \leq \varepsilon,
\end{equation*}
which implies in particular
\begin{equation*}
\forall \omega^e \in \Omega^e, \ \forall y \in Y, \quad \left|g^e(\omega^e,x',y)-g^e(\omega^e,x,y)\right| \leq \varepsilon \left\|g\right\|_\infty. 
\end{equation*}
\end{enumerate}
\end{definition}
The fact that Property \ref{close_belief} implies the above inequality stems from the fact that for all $(p,q,\omega) \in \Omega^e$,
\begin{eqnarray*}
\left|g^e(p,q,\omega,x',y)-g^e(p,q,\omega,x,y)\right| &\leq& \sum_{i \in I, k \in K} p(k) \left|x'(i|k)-x(i|k)\right| \left\|g\right\|_{\infty}
\\
&=& \sum_{i \in I, k \in K} \bar{x}^p(i) \left|p^{x'}(k|i)-p^{x}(k|i)\right| \left\|g\right\|_{\infty} 
\\
&=& \sum_{i \in I} \bar{x}^p(i) \left\|p^{x'}(.|i)-p^{x}(.|i)\right\|_1 \left\|g\right\|_{\infty} 
\\
&\leq& \varepsilon \left\|g\right\|_{\infty}.
\end{eqnarray*}

\begin{proposition} \label{prop_conv}
Let $\varepsilon \in (0,1/4]$, $\varepsilon_0:=\frac{1-\sqrt{1-4\varepsilon}}{2}$, $x \in X$, $p \in \Delta(K)$ and $x':=c_{\varepsilon_0}(x,p)$. Then $x'$ is a $6 \varepsilon$-convexification of $x$ at $p$. 
\end{proposition}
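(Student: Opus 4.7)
The plan is to exhibit explicit nonnegative weights $(\beta_{ii'}) \in \m{R}_+^{I \times I}$ and then verify the four defining conditions of a $6\varepsilon$-convexification one at a time. Three of them have essentially been established already: Property \ref{close_belief} is exactly Proposition \ref{concise_close}, Property \ref{mass_conservation} is exactly Proposition \ref{prop_conservation}, and Property \ref{convex_combination} will follow directly from Proposition \ref{prop_comb} once the $\beta_{ii'}$ are read off that formula. Hence the only real task is to choose the $\beta$'s so that they reproduce Proposition \ref{prop_comb} while simultaneously satisfying the ``transport'' Property \ref{opt_transport}.

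The key observation is Bayes' rule in the form $p = \sum_{i' \in I} \bar{x}^p(i') \cdot p^x(.|i')$, which allows me to rewrite the drift term ``$\varepsilon_0 \cdot p$'' appearing in Proposition \ref{prop_comb} as a convex combination of the original posteriors. Writing $N := NR[x,\varepsilon_0,p]$ for brevity, I would then set, for $i \in N$,
\begin{equation*}
\beta_{ii'} := \begin{cases} (1-\varepsilon_0) \dfrac{\bar{x}^p(i')}{\bar{x}^p(N)} + \varepsilon_0 \bar{x}^p(i') & \text{if } i' \in N, \\[2pt] \varepsilon_0 \bar{x}^p(i') & \text{if } i' \notin N, \end{cases}
\end{equation*}
and, for $i \notin N$,
\begin{equation*}
\beta_{ii'} := \begin{cases} (1-\varepsilon_0) + \varepsilon_0 \bar{x}^p(i) & \text{if } i' = i, \\ \varepsilon_0 \bar{x}^p(i') & \text{if } i' \neq i. \end{cases}
\end{equation*}
Substituting these into $\sum_{i'} \beta_{ii'} p^x(.|i')$ and invoking the Bayes identity recovers exactly the two cases of Proposition \ref{prop_comb}, so Property \ref{convex_combination} holds by construction, while nonnegativity of the $\beta$'s is immediate.

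What is left is Property \ref{opt_transport}, that is, $\sum_{i \in I} \beta_{ii'} \bar{x}^p(i) = \bar{x}^p(i')$. This is the only computation with any content. Fixing $i'$ and splitting into the cases $i' \in N$ and $i' \notin N$, one sums $\beta_{ii'} \bar{x}^p(i)$ over $i \in N$ and $i \notin N$ separately, uses $\bar{x}^p(N) + \bar{x}^p(I \setminus N) = 1$, and observes that the $\varepsilon_0$-terms collapse to $\varepsilon_0 \bar{x}^p(i')$ while the $(1-\varepsilon_0)$-terms collapse to $(1-\varepsilon_0) \bar{x}^p(i')$; for the second case the $\varepsilon_0 \bar{x}^p(i')^2$ contribution from $i=i'$ cancels exactly against the missing $i=i'$ term in the $\varepsilon_0$-sum. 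I do not anticipate any conceptual obstacle: all the substance---the $6\varepsilon$ closeness of the posteriors and the preservation of marginals on $I$---is already in Propositions \ref{prop_conservation} and \ref{concise_close}, and this last verification is purely bookkeeping.
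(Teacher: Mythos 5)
Your proposal is correct and follows essentially the same route as the paper: your weights $\beta_{ii'}$ are exactly those used in the paper's proof (written case-by-case rather than with indicator functions), Properties \ref{mass_conservation} and \ref{close_belief} are obtained from Propositions \ref{prop_conservation} and \ref{concise_close} just as in the paper, and your verification of Property \ref{opt_transport} is the same bookkeeping computation. The only (harmless) difference is that you make explicit, via the Bayes identity $p=\sum_{i'}\bar{x}^p(i')\,p^x(.|i')$, how Proposition \ref{prop_comb} yields Property \ref{convex_combination} with these $\beta$'s, a step the paper leaves implicit.
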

\begin{proof}
Define 
$$
\beta_{i i'}= \left\{
    \begin{array}{ll}
      \displaystyle   \left[(1-\varepsilon_0)  \frac{1}{\bar{x}^p(NR[x,\varepsilon_0,p])} 1_{i' \in NR[x,\varepsilon_0,p])}+\varepsilon_0\right] \bar{x}^p(i') & \mbox{when} \ i \in NR[x,\varepsilon_0,p]  \\ 
        (1-\varepsilon_0) 1_{i'=i}+\varepsilon_0 \bar{x}^p(i') & \mbox{otherwise.}
    \end{array}
\right.
$$
Proposition \ref{prop_comb} implies Property \ref{convex_combination}, Proposition \ref{prop_conservation} implies Property \ref{mass_conservation}, and Proposition \ref{concise_close} implies Property \ref{close_belief}. It remains to check Property \ref{opt_transport}. Let $i' \in I$. We have
\begin{eqnarray*}
\sum_{i \in I} \beta_{ii'} \bar{x}^p(i)&=& \sum_{i \in NR[x,\varepsilon_0,p]} \beta_{ii'} \bar{x}^p(i)+\sum_{i \in I \setminus NR[x,\varepsilon_0,p]} \beta_{ii'} \bar{x}^p(i)
\\
&=& \left[(1-\varepsilon_0)1_{i' \in NR[x,\varepsilon_0,p]}+\varepsilon_0 \bar{x}^p(NR[x,\varepsilon_0,p])\right] \bar{x}^p(i')
\\
&+&(1-\varepsilon_0) 1_{i' \in I \setminus NR[x,\varepsilon_0,p]}\bar{x}^p(i')+\varepsilon_0 \bar{x}^p(I \setminus NR[x,\varepsilon_0,p]) \bar{x}^p(i')
\\
&=&
\bar{x}^p(i'). 
\end{eqnarray*}
Hence, $x'$ is a $6\varepsilon$-convexification of $x$ at $p$.
\end{proof}
\begin{proposition} \label{opt_concise}
Assume that $\Gamma$ is an absorbing game with incomplete information on both sides. Let $\varepsilon>0$ and $\lambda \in (0,1]$. Let $\sigma$ be an optimal stationary strategy in $\Gamma^e_\lambda$, and $\sigma'$ a stationary strategy such that for all $\omega^e=(p,q,\omega) \in \Omega^e$, $\sigma'(\omega^e)$ is an $\varepsilon$-convexification of $\sigma(\omega^e)$ at $p$. Then $\sigma'$ is $2\varepsilon \left\|g\right\|_{\infty}$-optimal in $\Gamma^e_\lambda$. 
\end{proposition}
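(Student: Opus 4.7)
The plan is to establish $w^{\sigma'}(\omega^e) := \min_{\tau \in T^e} \gamma^e_\lambda(\omega^e, \sigma', \tau) \geq v^e_\lambda(\omega^e) - 2\varepsilon\|g\|_\infty$ via a monotone fixed-point argument. Since $\sigma'$ is stationary, $w^{\sigma'}$ is the unique bounded fixed point of the monotone $(1-\lambda)$-contraction
\[
T'f(\omega^e) := \min_{y \in Y}\bigl\{\lambda g^e(\omega^e, x', y) + (1-\lambda)\m{E}^e_{\omega^e, x', y}(f)\bigr\}, \qquad x' := \sigma'(\omega^e),
\]
so by monotonicity of $T'$ it suffices to show that $v^e_\lambda - 2\varepsilon\|g\|_\infty$ is a sub-fixed-point, equivalently
\[
v^e_\lambda(\omega^e) - T'v^e_\lambda(\omega^e) \leq 2\lambda\varepsilon\|g\|_\infty \qquad \forall \omega^e \in \Omega^e.
\]

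Using the Shapley equation (Proposition \ref{prop_recursive}) with $x := \sigma(\omega^e)$ and the elementary $\min$-bound $\min_y f - \min_y h \leq \max_y(f-h)$, this reduces, for each $y \in Y$, to controlling the stage-payoff discrepancy $\lambda[g^e(\omega^e, x, y) - g^e(\omega^e, x', y)]$ and the continuation discrepancy $(1-\lambda)[\m{E}^e_{\omega^e, x, y}(v^e_\lambda) - \m{E}^e_{\omega^e, x', y}(v^e_\lambda)]$. The first is $\leq \lambda\varepsilon\|g\|_\infty$ by Property~4 of the $\varepsilon$-convexification, so the entire burden is on the continuation discrepancy.

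My plan for the continuation term is to combine three ingredients: Property~2 ($\bar{x'}^p = \bar{x}^p$, so the joint action law of $(i,j)$ is identical under $x$ and $x'$ and one may compare the two expectations under a common measure on $(i,j,\omega')$); Property~1 together with the concavity of $v^e_\lambda$ in $p$ (which yields, termwise, $v^e_\lambda(p^{x'}(.|i), q^y(.|j), \omega') \geq \sum_{i'} \beta_{ii'}\, v^e_\lambda(p^x(.|i'), q^y(.|j), \omega')$); and then Property~3 ($\sum_i \beta_{ii'}\bar{x}^p(i) = \bar{x}^p(i')$) to rearrange the resulting expression back into a form comparable with $\m{E}^e_{\omega^e, x, y}(v^e_\lambda)$. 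The absorbing assumption is essential at this point: at any absorbing successor $\omega' \in \Omega^{\mathrm{abs}}$, the function $v^e_\lambda(\cdot, \cdot, \omega')$ is bilinear in $(p, q)$, so the concavity step becomes an equality, and a direct computation using Property~2 reduces the absorbing contribution to a quantity bounded by the $L^1$ bound $\|p^{x'}(.|i) - p^x(.|i)\|_1 \leq \varepsilon$ of Property~4, hence by $\varepsilon\|g\|_\infty$. In particular, when the initial state $\omega$ is itself absorbing, there is no residual at all and one even has $w^{\sigma'}(\omega^e) = v^e_\lambda(\omega^e)$.

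The delicate step I expect to be the main obstacle is the $\omega^0 \to \omega^0$ contribution at the non-absorbing initial state, where $v^e_\lambda(\cdot, \cdot, \omega^0)$ is only concave (not bilinear) in $p$ and the transition probability $\rho(\omega^0|\omega^0, i, j)$ genuinely depends on $i$, so the concavity-based cancellation leaves a signed-measure residual $\delta(i, j, \omega^0)$ satisfying $\sum_i \delta = 0$. I would close the recursion by exploiting that $D := v^e_\lambda - w^{\sigma'}$ vanishes at absorbing states: the Bellman-type inequality for $D$ then restricts to the non-absorbing stratum and is driven by an operator whose spectral radius is strictly less than $1$ because absorption happens in finite expected time. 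Assembling the stage-payoff bound, the absorbing-successor bound, and this controlled $\omega^0$-recurrence yields $v^e_\lambda - T'v^e_\lambda \leq 2\lambda\varepsilon\|g\|_\infty$ pointwise, and monotonicity of $T'$ then gives $w^{\sigma'} \geq v^e_\lambda - 2\varepsilon\|g\|_\infty$, as required.
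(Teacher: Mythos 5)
There is a genuine gap, and it sits exactly at the point the paper's own remark warns about. Your reduction via the monotone operator $T'$ is valid only if the one-step inequality $v^e_\lambda-T'v^e_\lambda\leq 2\lambda\varepsilon\left\|g\right\|_{\infty}$ holds pointwise, but this inequality is false in general and cannot be assembled from your ingredients. What the properties of the $\varepsilon$-convexification actually give is a per-stage bound of the form $\lambda\varepsilon\left\|g\right\|_{\infty}+2(1-\lambda)(1-R)\,\varepsilon\left\|g\right\|_{\infty}$, where $R$ is the probability of remaining in $\omega^0$: the discrepancy at absorbing successors (where $v^e_\lambda$ is bilinear, hence only $\left\|g\right\|_\infty$-Lipschitz in $p$) is of order $\varepsilon\left\|g\right\|_{\infty}$ times the absorption probability, and the residual you call $\delta(i,j,\omega^0)$, coming from the $i$-dependence of $\rho(\omega^0|\omega^0,i,j)$, is likewise controlled only by $(1-r(j))\varepsilon\left\|g\right\|_\infty$. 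Neither term is $O(\lambda)$, so $v^e_\lambda-2\varepsilon\left\|g\right\|_\infty$ need not be a sub-fixed-point of $T'$ (take $\lambda$ small and a convexification that erases an $\varepsilon$-sized split which is correlated with which absorbing state is reached: the loss of order $\varepsilon\left\|g\right\|_\infty$ is incurred with weight $(1-\lambda)$, not $\lambda$). The proposition is nevertheless true because this $\varepsilon$-sized loss can be incurred at most once along any play (absorption is irreversible), so it contributes $O(\varepsilon)$ to the total payoff rather than $O(\varepsilon/\lambda)$ --- but a stationary sub-solution argument has no way to record ``at most once''; it necessarily charges the error at every stage.

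The paper's proof is built precisely to capture this bookkeeping: it fixes a stationary reply $\tau$, sets $D:=\min_{p,q}\{\gamma^e_\lambda(p,q,\omega^0,\sigma',\tau)-v^e_\lambda(p,q,\omega^0)\}$, and applies Lemma \ref{lemma:future_payoff} with $u=\gamma^e_\lambda(\cdot,\sigma',\tau)$ and $v=v^e_\lambda$, which coincide at absorbing states; this yields $D\geq-\lambda\varepsilon\left\|g\right\|_\infty+(1-\lambda)\bigl(DR-2(1-R)\varepsilon\left\|g\right\|_\infty\bigr)$, i.e.\ the gap is carried forward only with weight $R$ while the undiscounted error is weighted by $1-R$, and a short contradiction argument then gives $D\geq-2\varepsilon\left\|g\right\|_\infty$ uniformly in $\lambda$. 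Your second mechanism (``$D$ vanishes at absorbing states, spectral radius $<1$'') gestures toward this, but as stated it is not correct: absorption need not occur in finite expected time (think of Big-Match-type play), the contraction you actually have is $(1-\lambda)R$, and a generic spectral-radius bound would only return an estimate of order $\varepsilon\left\|g\right\|_\infty/\lambda$ unless you exploit the specific $R$ versus $(1-R)$ structure above; moreover, even if you closed such a recursion for $D$, it would establish the proposition directly and could not be converted back into the pointwise inequality $v^e_\lambda-T'v^e_\lambda\leq2\lambda\varepsilon\left\|g\right\|_\infty$ that your final step claims. To repair the proposal, drop the sub-solution reduction and run the gap recursion with the split into the ``stay in $\omega^0$'' part (lower-bounded by $RD$, using $u=v$ at absorbing states) and the ``leave $\omega^0$'' part (bounded by $2(1-R)\varepsilon\left\|g\right\|_\infty$ via Lipschitzness and Properties 1--3 applied to the unweighted sum $\sum_i\bar{x}^p(i)v^e_\lambda(p^{x'}(\cdot|i),\cdot,\omega^0)$), exactly as in the paper.
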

\begin{remark}
When $x$ is an optimal mixed action in the Shapley equation \eqref{eq:shapley_e} at $(p,q,\omega) \in \Omega^e$, and $x'$ is an $\varepsilon$-convexification of $x$ at $p$, for small $\varepsilon$, intuitively $x'$ should also be ``good" at $(p,q,\omega)$. 
Indeed, Property \ref{convex_combination} ensures that $x'$ discloses less information than $x$, while Property \ref{mass_conservation} implies that the law on the future state in $\Omega$ is the same under $x$ and $x'$. Moreover, Property \ref{close_belief} guarantees that stage payoffs under $x$ and $x'$ are $\varepsilon \left\|g\right\|_\infty$-close. These intuitions are correct, but they neglect an important aspect: the correlation between future state in $\Omega$ and posterior belief. Indeed, even if $x'$ is less informative than $x$ ``on average", it may be that $x'$ correlates in a bad way the future state and the posterior belief. To put things in a more formal way, consider $p,p' \in \Delta(K)$, $\beta>0$, $p:=\beta \cdot p'+(1-\beta) \cdot p''$, and $\omega,\omega' \in \Omega$. Even though $v_\lambda$ is concave in $p$, it may be that
\begin{equation*}
\beta v_\lambda(p',q,\omega)+(1-\beta)v_\lambda(p'',q,\omega') >\beta v_\lambda(p,q,\omega)
+(1-\beta) v_\lambda(p,q,\omega')
\end{equation*}
Hence, proving that $\sigma'$ is indeed $O(\varepsilon)$-optimal is trickier than a mere concave inequality applied ``stage by stage", and requires Property \ref{opt_transport}, as well as the fact that the game is absorbing. 
\end{remark}

We need first the following lemma: 
\begin{lemma} \label{lemma:future_payoff}
Assume that $\Gamma$ is an absorbing game with incomplete information on both sides, and let $\varepsilon>0$.
Let $u,v:\Omega^e \rightarrow \m{R}$ two functions satisfying the two following properties: 
\begin{enumerate}[(i)]
\item \label{concise_property_1}
For all $\omega \neq \omega^0$ and $(p',q') \in \Delta(K) \times \Delta(L)$, $u(p',q',\omega)=v(p',q',\omega)$
\item \label{concise_property_2}
There exists $C>0$ such that for all $q' \in \Delta(L)$, $v(.,q',\omega^0): (\Delta(K),\left\|.\right\|_1) \rightarrow (\m{R},|.|)$ is concave and $C$-Lipschitz.
\end{enumerate}
Let $(p,q) \in \Delta(K) \times \Delta(L)$, $x \in X$, $x'$ an $\varepsilon$-convexification of $x$ at $p$, and $y \in Y$. 
For all $j \in J$, let $r(j):=\sum_{i \in I} \rho(\omega^0|\omega^0,i,j) \bar{x}^p(i)$, and $R:=\sum_{j \in J} \bar{y}^q(j) r(j)$. Then 
\begin{equation*}
\m{E}^e_{p,q,\omega^0,x',y}(u)-\m{E}^e_{p,q,\omega^0,x,y}(v) \geq \min_{(p',q') \in \Delta(K) \times \Delta(L)} \left\{u(p',q',\omega^0)-v(p',q',\omega^0)\right\} R -2(1-R) \varepsilon C.
\end{equation*}
\end{lemma}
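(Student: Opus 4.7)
The plan is to decompose the difference $\Delta := \m{E}^e_{p,q,\omega^0,x',y}(u) - \m{E}^e_{p,q,\omega^0,x,y}(v)$ by conditioning on whether the next state is $\omega^0$ or is absorbing. By Property \ref{mass_conservation} of the convexification ($\overline{x'}^p = \overline{x}^p$), the two expectations share the same marginal distribution over realized action pairs $(i,j)$. Writing $u = v + (u-v)\mathbf{1}_{\omega = \omega^0}$ (using Property (i)), the $(u-v)$-contribution equals exactly
\begin{equation*}
\sum_{i,j} \overline{x}^p(i)\, \overline{y}^q(j)\, \rho(\omega^0|\omega^0,i,j)\, (u-v)(p^{x'}(.|i), q^y(.|j), \omega^0),
\end{equation*}
which by definition of $m$ and of $R$ is at least $mR$. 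It therefore suffices to prove that $\m{E}^e_{p,q,\omega^0,x',y}(v) - \m{E}^e_{p,q,\omega^0,x,y}(v) \geq -2(1-R)\varepsilon C$.

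I split this remaining quantity into $D_{\omega^0}$ (the contribution of transitions back to $\omega^0$) and $D_{\text{abs}}$ (the contribution of transitions to absorbing states), and aim to bound each from below by $-(1-R)\varepsilon C$. For $D_{\omega^0}$, I apply concavity of $v(\cdot, q^y(.|j), \omega^0)$ together with Property \ref{convex_combination} to obtain the Jensen-type inequality $v(p^{x'}(.|i), q^y(.|j), \omega^0) \geq \sum_{i'} \beta_{ii'} v(p^x(.|i'), q^y(.|j), \omega^0)$. The key manipulation is to decompose $\rho(\omega^0|\omega^0, i, j) = 1 - a(i,j)$, where $a(i,j)$ is the absorption probability from actions $(i,j)$: the ``$1$''-piece is constant in $i$ and combines with Property \ref{opt_transport} ($\sum_i \overline{x}^p(i) \beta_{ii'} = \overline{x}^p(i')$) to cancel exactly against the corresponding term of $\m{E}^e_{p,q,\omega^0,x,y}(v)$, while the ``$a(i,j)$''-piece is controlled using the Lipschitz constant $C$ together with Property \ref{close_belief} ($\|p^{x'}(.|i) - p^x(.|i)\|_1 \leq \varepsilon$), yielding $D_{\omega^0} \geq -(1-R)\varepsilon C$ after summing over $j$. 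For $D_{\text{abs}}$, Property \ref{close_belief} gives $|v(p^{x'}(.|i), q^y(.|j), \omega') - v(p^x(.|i), q^y(.|j), \omega')| \leq C\varepsilon$ at each absorbing $\omega'$ (the $C$-Lipschitz bound at absorbing states is implicit in the intended application, where $v = v_\lambda$ reduces to the bilinear map $(p',q') \mapsto \sum_{k,\ell} p'(k) q'(\ell) g(k,\ell,\omega')$ and is therefore $\|g\|_\infty$-Lipschitz in $p'$); weighting by absorption probabilities of total mass $1-R$ then yields $|D_{\text{abs}}| \leq (1-R)\varepsilon C$.

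The main obstacle is bounding $D_{\omega^0}$: Jensen alone produces a convex combination through the weights $\beta_{ii'}$, but the transition probability $\rho(\omega^0|\omega^0, i, j)$ depends on $i$ and therefore does not factor cleanly through Property \ref{opt_transport}. The decisive trick is the additive decomposition $\rho(\omega^0|\omega^0, i, j) = 1 - a(i,j)$, which separates the constant-in-$i$ piece (cancelling exactly against the target sum) from the $i$-dependent piece of total mass $1-R$ (controlled by the Lipschitz bound). Summing the two error bounds on $D_{\omega^0}$ and $D_{\text{abs}}$ and adding the initial $mR$ contribution yields $\Delta \geq mR - 2(1-R)\varepsilon C$, as claimed.
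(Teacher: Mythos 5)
Your proof is correct and takes essentially the same route as the paper's: the $(u-v)$-contribution at $\omega^0$ evaluated at the $x'$-posteriors (the paper's term $A(j)$) is bounded below by $mR$, and the remaining $v$-difference (the paper's $B(j)$) is handled through exactly the same decomposition $\rho(\omega^0|\omega^0,i,j)=1-a(i,j)$, with concavity plus Property \ref{opt_transport} cancelling the weight-one piece and the $C$-Lipschitz bound plus Property \ref{close_belief} controlling the two residual pieces of total mass $1-R$ each. Your parenthetical about needing Lipschitz continuity of $v(\cdot,q',\omega)$ at absorbing states matches an implicit use of the same fact in the paper's proof, and is resolved identically in the intended application $v=v^e_\lambda$ with $C=\left\|g\right\|_\infty$.
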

\begin{proof}
\begin{eqnarray*}
&&\m{E}^e_{p,q,\omega^0,x',y}(u)-\m{E}^e_{p,q,\omega^0,x,y}(v)
\\
&=& \sum_{i \in I} \sum_{j \in J} \sum_{\omega \in \Omega} \rho(\omega|\omega^0,i,j) 
\bar{x}^p(i) \bar{y}^q(j) \cdot [u(p^{x'}(.|i),q^y(.|j), \omega)-v( p^{x}(.|i),q^y(.|j), \omega)]
\\
&=&
\sum_{j \in J} \bar{y}^q(j) \underbrace{\sum_{i \in I} \sum_{\omega \in \Omega} \rho(\omega|\omega^0,i,j) 
\bar{x}^p(i)  \cdot [u(p^{x'}(.|i),q^y(.|j), \omega)-v( p^{x'}(.|i),q^y(.|j), \omega)]}_{A(j)} 
\\
&+& \sum_{j \in J} \bar{y}^q(j) \underbrace{\sum_{i \in I} \sum_{\omega \in \Omega} \ \rho(\omega|\omega^0,i,j) 
\bar{x}^p(i) \cdot [v(p^{x'}(.|i),q^y(.|j), \omega)-v( p^{x}(.|i),q^y(.|j), \omega)]}_{B(j)}
\end{eqnarray*}
Let $j \in J$. Let us first take care of the term $A(j)$. Let $i \in I$. For all $\omega \neq \omega^0$, by Property \ref{concise_property_1}, we have $u(p^{x'}(.|i),q^y(.|j), \omega)=v(p^{x'}(.|i),q^y(.|j), \omega)$. 
\\
Moreover, setting $D:=\min_{(p',q') \in \Delta(K) \times \Delta(L)} \left\{u(p',q',\omega^0)-v(p',q',\omega^0)\right\}$, we have 
\\
$u(p^{x'}(.|i),q^y(.|j), \omega^0)-v(p^{x'}(.|i),q^y(.|j), \omega^0) \geq D$. We deduce that
\begin{eqnarray}
A(j) &=&
\sum_{i \in I} \rho(\omega^0|\omega^0,i,j) 
\bar{x}^p(i) \cdot [u(p^{x'}(.|i),q^y(.|j), \omega^0)-v( p^{x'}(.|i),q^y(.|j), \omega^0)]
\nonumber \\
&\geq& 
\left(\sum_{i \in I}  \rho(\omega^0|\omega^0,i,j) 
\bar{x}^p(i)  \right) \cdot D
\nonumber \\
&=& r(j)D.  \label{ineq_Aj}
\end{eqnarray}
Consider now the term $B(j)$. We have
\begin{eqnarray*}
B(j)&=&
\sum_{i \in I}\sum_{\omega \in \Omega \setminus \left\{\omega^0\right\}} \ \rho(\omega|\omega^0,i,j) 
\bar{x}^p(i) \cdot [v(p^{x'}(.|i),q^y(.|j), \omega)-v(p^{x}(.|i),q^y(.|j), \omega)]
\\
&+&
\sum_{i \in I} \left[1-\sum_{\omega \in \Omega \setminus \left\{\omega^0\right\}} \ \rho(\omega|\omega^0,i,j))\right]
\bar{x}^p(i) \cdot [v(p^{x'}(.|i),q^y(.|j), \omega^0)-v( p^{x}(.|i),q^y(.|j), \omega^0)]
\end{eqnarray*}
By \ref{concise_property_2} and Property \ref{close_belief} of $\varepsilon$-convexification, for all $\omega \in \Omega$, we have 
\\
$|v(p^{x'}(.|i),q^y(.|j), \omega)-v( p^{x}(.|i),q^y(.|j), \omega)| \leq \varepsilon C$.
Hence,
\begin{eqnarray*}
B(j)&\geq&
\sum_{i \in I}\sum_{\omega \in \Omega \setminus \left\{\omega^0\right\}} \ \rho(\omega|\omega^0,i,j) 
\bar{x}^p(i) \cdot [-\varepsilon C]
\\
&+&
\sum_{i \in I}
\bar{x}^p(i) \cdot [v(p^{x'}(.|i),q^y(.|j), \omega^0)-v( p^{x}(.|i),q^y(.|j), \omega^0)]
\\
&-&  \sum_{i \in I}\sum_{\omega \in \Omega \setminus \left\{\omega^0\right\}} \ \rho(\omega|\omega^0,i,j) 
\bar{x}^p(i) \cdot [\varepsilon C]
\\
&=& \sum_{i \in I}
\bar{x}^p(i) \cdot [v(p^{x'}(.|i),q^y(.|j), \omega^0)-v( p^{x}(.|i),q^y(.|j), \omega^0)]-2  (1-r(j)) \varepsilon C 
\end{eqnarray*}
Moreover,  
\begin{align*}
\sum_{i \in I} 
\bar{x}^p(i) \cdot v(p^{x'}(.|i),q^y(.|j), \omega^0) & \geq  
\sum_{i \in I} 
\bar{x}^p(i)  \cdot \sum_{i' \in I} \beta_{ii'}  v(p^x(.|i'),q^y(.|j), \omega^0) \\ 
& \text{Property \ref{convex_combination} and concavity of $v$ in the first variable}
\\
&= 
\sum_{i' \in I} 
  \left[ \sum_{i \in I} \beta_{ii'} \bar{x}^p(i)\right]  v(p^x(.|i'),q^y(.|j), \omega^0) &
\\
&= 
\sum_{i' \in I} \bar{x}^p(i') v(p^x(.|i'),q^y(.|j), \omega^0) \quad   ; \text{Property \ref{opt_transport}}
\end{align*}
It follows that
\begin{equation*}
B(j) \geq -2(1-r(j)) \varepsilon C,
\end{equation*}
and combining with \eqref{ineq_Aj},
\begin{eqnarray*}
\m{E}^e_{p,q,\omega^0,x',y}(u)-\m{E}^e_{p,q,\omega^0,x,y}(v) &\geq& 
\sum_{j \in J} \bar{y}^q(j) [r(j)D-2(1-r(j))\varepsilon C]
\\
&\geq&
D R -2(1-R) \varepsilon C.
\end{eqnarray*}
\end{proof}
\begin{proof}[Proof of Proposition \ref{opt_concise}]
Let $\tau: \Omega^e \rightarrow Y$ be a stationary strategy for Player 2. 
Let $p,q$ that minimize $(p,q) \rightarrow \gamma^e_\lambda(p,q,\omega^0,\sigma',\tau)-v^e_\lambda(p,q,\omega^0)$, and  call this value $D$. It is enough to prove that 
$D \geq -2\varepsilon \left\|g\right\|_{\infty}$.
\\
Let $x:=\sigma(p,q,\omega)$, $x':=\sigma'(p,q,\omega)$ and $y:=\tau(p,q,\omega)$. 
By a recursive argument, we have 
\begin{equation} \label{dyn_succinct}
\gamma^e_{\lambda}(p,q,\omega^0,\sigma',\tau)=\lambda g^e(p,q,\omega^0,x',y)+(1-\lambda) \m{E}^e_{p,q,\omega^0,x',y}(\gamma^e_{\lambda}(\, . \, ,\sigma',\tau)),
\end{equation}
and by Shapley equation (see Proposition \ref{prop_recursive} \ref{prop:shapley_e}),
\begin{equation} \label{dyn_opt}
v^e_{\lambda}(p,q,\omega^0) \leq \lambda g^e(p,q,\omega^0,x,y)+(1-\lambda) \m{E}^e_{p,q,\omega^0,x,y}(v^e_{\lambda}).
\end{equation}
By Property \ref{close_belief}, we have
\begin{equation} \label{payoff_succinct}
g^e(p,q,\omega^0,x',y) \geq  g^e(p,q,\omega^0,x,y)- \varepsilon \left\|g\right\|_{\infty}. 
\end{equation}
Let us focus on the second term. 
 Applying the previous lemma with $u=\gamma_\lambda(.,\sigma',\tau)$, $v=v^e_\lambda$ and $C=\left\|g\right\|_\infty$, and using the same notations for $R$, we get
\begin{equation} \label{ineq:xx'}
\m{E}^e_{p,q,\omega^0,x',y}(\gamma^e_{\lambda})-\m{E}^e_{p,q,\omega^0,x,y}(v^e_{\lambda}) \geq DR -2(1-R) \varepsilon \left\|g\right\|_{\infty}
\end{equation}
Combining (\ref{dyn_succinct}), \eqref{dyn_opt} and (\ref{payoff_succinct}), we get
\begin{eqnarray*}
\gamma^e_{\lambda}(p,q,\omega^0,\sigma',\tau)-v^e_\lambda(p,q,\omega^0) &\geq& -\lambda  \varepsilon \left\|g\right\|_\infty+(1-\lambda)(DR -2(1-R) \varepsilon \left\|g\right\|_{\infty})
\\
&=& -\lambda  \varepsilon \left\|g\right\|_\infty+(1-\lambda)(D-(1-R)(D+2\varepsilon \left\|g\right\|_{\infty})),
\end{eqnarray*}
which implies by definition of $D$
\begin{equation*}
D \geq -\lambda  \varepsilon \left\|g\right\|_\infty+(1-\lambda)(D-(1-R)(D+2\varepsilon \left\|g\right\|_{\infty})). 
\end{equation*}
Recall that our aim is to prove that $D \geq -2\varepsilon \left\|g\right\|_{\infty}$. Assume by contradiction that $D< -2\varepsilon \left\|g\right\|_{\infty}$. It follows that
\begin{equation*} 
D \geq -\lambda \varepsilon \left\|g\right\|_\infty+(1-\lambda)D,
\end{equation*}
and thus
\begin{equation*}
D \geq -\varepsilon \left\|g \right\|_\infty \geq -2\varepsilon \left\|g \right\|_\infty, 
\end{equation*}
which is a contradiction. 
\end{proof}
We are now ready to prove Proposition \ref{prop:opt_concise}. 
\begin{proof}[Proof of Proposition \ref{prop:opt_concise}]
Let $\Gamma=(K,L,\Omega,I,J,\rho,g)$ be an absorbing game with incomplete information on both sides. Let $\varepsilon \in (0,1/4]$, $\lambda \in (0,1]$, and $\sigma$ be an optimal stationary strategy for Player 1 in $\Gamma^e_\lambda$. Let $\varepsilon_0:=\frac{1-\sqrt{1-4\varepsilon}}{2}$.
Define a stationary strategy $\sigma'$ by: for all $(p,q,\omega) \in \Omega^e$, $\sigma'(p,q,\omega):=c_{\varepsilon_0}(\sigma(p,q,\omega),p)$. By Proposition \ref{prop_concise_mapping}, $\sigma'$ is $\varepsilon$-concise and $\varepsilon$-ambiguous. According to Proposition \ref{prop_conv}, for all $(p,q,\omega) \in \Omega^e$, 
$\sigma'(p,q,\omega)$ is a $6\varepsilon$-convexification of $\sigma(p,q,\omega)$ at $p$. By Proposition \ref{opt_concise}, $\sigma'$ is $12\varepsilon \left\|g\right\|_{\infty}$-optimal in 
$\Gamma^e_\lambda$, and the result follows. 
\end{proof}
\subsection{Martingale of beliefs under concise strategies}
This section finishes on the proof of Proposition \ref{prop:bounded_var} by giving an upper bound on the $L^1$-variation of the belief process generated by an $\varepsilon$-concise strategy. Results do not require the absorbing assumption, hence a stochastic game with incomplete information on both sides $\Gamma=(K,L,\Omega,I,J,\rho,g)$ is fixed for the remainder of this section. The goal is to prove the following:
\begin{proposition} \label{variation_1_e}
Let $\varepsilon \in (0,1]$, $\omega^e \in \Omega^e$, $\sigma$ an $\varepsilon$-concise strategy and $\tau$ a strategy of Player 2 in $\Gamma^e$. 
Let $T:=\max\left\{m \geq 1, p_m \in \Delta(K) \setminus F_{\varepsilon} \right\}$. Then
\begin{equation*}
\m{E}^{e}_{\omega^e,\sigma,\tau}\left(\sum_{m=1}^{T} \left\|p_{m+1}-p_m\right\|_1\right) \leq 3 \sqrt{|K|} \varepsilon^{-5}. 
\end{equation*}
\end{proposition}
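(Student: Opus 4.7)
\emph{Proof sketch.} The plan is to decompose the conditional $L^1$-increment of the belief process $(p_m)$ at each stage $m\leq T$ into non-revealing and revealing contributions, to use the belief martingale identity to bound the non-revealing part by the revealing one, and to bound the latter using the total $L^2$-variation of $(p_m)$, which is finite.

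First I would introduce at each stage $m$ the action $x_m\in X$ played by $\sigma$, the sets $NR_m:=NR[x_m,\varepsilon,p_m]$ and $R_m:=I\setminus NR_m$, and observe that $\varepsilon$-conciseness (Definition~\ref{def_concise}) forces $p_m^{x_m}(\cdot|i)$ to equal a single point $p_m^{NR}\in\Delta(K)$ for every $i\in NR_m$. The conditional expected $L^1$-increment then splits as
\begin{equation*}
V_1(m):=\bar{x}_m^{p_m}(NR_m)\,\|p_m^{NR}-p_m\|_1+\sum_{i\in R_m}\bar{x}_m^{p_m}(i)\,\|p_m^{x_m}(\cdot|i)-p_m\|_1=:V_1^{NR}(m)+V_1^R(m).
\end{equation*}

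The crucial step is to prove $V_1^{NR}(m)\leq V_1^R(m)$. For this I would use the belief martingale identity $p_m=\bar{x}_m^{p_m}(NR_m)\,p_m^{NR}+\sum_{i\in R_m}\bar{x}_m^{p_m}(i)\,p_m^{x_m}(\cdot|i)$ to rewrite
\begin{equation*}
\bar{x}_m^{p_m}(NR_m)\,(p_m^{NR}-p_m)=-\sum_{i\in R_m}\bar{x}_m^{p_m}(i)\,(p_m^{x_m}(\cdot|i)-p_m),
\end{equation*}
and apply the triangle inequality on the right, yielding $V_1(m)\leq 2V_1^R(m)$. The second step exploits that for $m\leq T$ one has $p_m\notin F_\varepsilon$, so $p_m(k)\geq\varepsilon$ for every $k$: the defining condition of $R_m$ gives, for each $i\in R_m$, some $k$ with $|p_m^{x_m}(k|i)-p_m(k)|>\varepsilon\,p_m(k)\geq\varepsilon^2$, hence $\|p_m^{x_m}(\cdot|i)-p_m\|_2\geq\varepsilon^2$. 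Combined with $\|v\|_1\leq\sqrt{|K|}\,\|v\|_2$, this upgrades to $\|p_m^{x_m}(\cdot|i)-p_m\|_1\leq\sqrt{|K|}\,\varepsilon^{-2}\,\|p_m^{x_m}(\cdot|i)-p_m\|_2^2$; weighting by $\bar{x}_m^{p_m}(i)$ and summing over $i\in R_m$ gives $V_1^R(m)\leq\sqrt{|K|}\,\varepsilon^{-2}\,V_2^R(m)$, where $V_2^R(m)$ is the analogous sum with squared $L^2$-norms.

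To conclude I would note that $V_2^R(m)$ is dominated by the conditional squared $L^2$-increment of the bounded $\Delta(K)$-valued martingale $(p_m)$, so the Pythagorean identity yields $\m{E}^e_{\omega^e,\sigma,\tau}\sum_{m\geq 1}\|p_{m+1}-p_m\|_2^2\leq 1$. Chaining the three bounds gives
\begin{equation*}
\m{E}^e_{\omega^e,\sigma,\tau}\left(\sum_{m=1}^T\|p_{m+1}-p_m\|_1\right)\leq 2\sqrt{|K|}\,\varepsilon^{-2}\leq 3\sqrt{|K|}\,\varepsilon^{-5}
\end{equation*}
since $\varepsilon\leq 1$. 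The main obstacle is the first step: $V_1^{NR}(m)$ can be of order $\varepsilon$ and nonzero for arbitrarily many stages, so it cannot be controlled directly. What unblocks the proof is the collapse of all non-revealing posteriors into the single point $p_m^{NR}$ guaranteed by $\varepsilon$-conciseness, which lets the martingale identity transfer the control from the handleable revealing part.
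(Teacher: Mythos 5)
Your proof is correct, and it reaches the bound by a genuinely different route than the paper at the central step. The paper also splits the conditional $L^1$-increment at each stage into its non-revealing and revealing parts, but it controls the non-revealing part through Lemma \ref{jump} \ref{jump_nr} (each non-revealing posterior moves by at most $2\left\|1/p_m\right\|_\infty \bar{x}_m^{p_m}(R_m)/\bar{x}_m^{p_m}(NR_m)$), then bounds $\bar{x}_m^{p_m}(R_m)$ by the probability of a jump of size at least $\varepsilon^2$ and applies Markov's inequality to the squared $L^1$-increments; this costs $\varepsilon^{-1}\cdot\varepsilon^{-4}$ and produces the factor $3\varepsilon^{-5}$, the revealing part being handled as you do by dividing by the $\varepsilon^2$ jump lower bound. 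You instead exploit the collapse of all non-revealing posteriors onto the single point $p_m^{NR}$ (a correct consequence of Definition \ref{def_concise}, and implicitly the computation inside the proof of Lemma \ref{jump}) together with the exact barycentric identity $\bar{x}_m^{p_m}(NR_m)(p_m^{NR}-p_m)=-\sum_{i\in R_m}\bar{x}_m^{p_m}(i)(p_m^{x_m}(\cdot|i)-p_m)$, which transfers the non-revealing contribution directly onto the revealing one, so Markov's inequality is never needed and the jump lower bound is invoked only once, in $L^2$. Both proofs then rest on the same two ingredients: the lower bound $\varepsilon^2$ on revealing jumps, valid because $p_m\notin F_\varepsilon$ on $\left\{m\le T\right\}$ (which you use in the same implicit way the paper does), and the bound $\m{E}\sum_{m\ge1}\left\|p_{m+1}-p_m\right\|_2^2\le 1$ for the bounded belief martingale. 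What your route buys: a sharper constant, $2\sqrt{|K|}\,\varepsilon^{-2}$ instead of $3\sqrt{|K|}\,\varepsilon^{-5}$, hence the stated bound with room to spare; and, incidentally, you bypass the paper's comparison of $\sum\left\|p_{m+1}-p_m\right\|_1^2$ with $\sqrt{|K|}\sum\left\|p_{m+1}-p_m\right\|_2^2$ (which, as justified by $\left\|\cdot\right\|_1\le\sqrt{|K|}\left\|\cdot\right\|_2$, really carries a factor $|K|$), since in your argument $\sqrt{|K|}$ enters only linearly through $\left\|\cdot\right\|_1\le\sqrt{|K|}\left\|\cdot\right\|_2$. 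What the paper's route buys is only that it needs no cancellation identity, just the one-sided estimate of Lemma \ref{jump} \ref{jump_nr}, at the price of the worse exponent in $\varepsilon$; your argument is a valid and arguably cleaner substitute.
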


The two following inequalities play a crucial role.
\begin{lemma} \label{jump}
Let $\varepsilon>0$ and $p \in \Delta(K)$ such that for all $k \in K$, $p(k)>0$. Let $x \in X$ be an $\varepsilon$-concise mixed action at $p$. Then 
\begin{enumerate}[(i)]
\item \label{jump_nr}
For all $i \in NR[x,\varepsilon,p]$, 
\begin{equation*}
\left\|p^{x}(.|i)-p \right\|_1 \leq 2 \left\|\frac{1}{p}\right\|_{\infty} \frac{\bar{x}^p(R[x,\varepsilon,p])}{\bar{x}^p(NR[x,\varepsilon,p])}
\end{equation*}
\item \label{jump_r}
For all $i \in R[x,\varepsilon,p]$,
\begin{equation*}
\left\|p^{x}(.|i)-p \right\|_1 \geq \varepsilon \min_{k \in K} p(k).
\end{equation*}
\end{enumerate}
\end{lemma}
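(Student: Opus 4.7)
The proof splits naturally into two largely independent arguments; throughout I abbreviate $NR := NR[x,\varepsilon,p]$ and $R := R[x,\varepsilon,p]$.

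For part (i), the plan is to exploit the $\varepsilon$-concise identity $x(i|k) = \frac{x(NR|k)}{\bar{x}^p(NR)}\bar{x}^p(i)$ directly. Plugging into Bayes' rule gives, for every $i \in NR$,
$$p^x(k|i) = \frac{x(NR|k)\,p(k)}{\bar{x}^p(NR)}.$$
Setting $r := \bar{x}^p(R)$ and $r_k := x(R|k) = 1 - x(NR|k)$, a direct computation yields $p^x(k|i) - p(k) = \frac{p(k)(r - r_k)}{1-r}$, so that
$$\|p^x(\cdot|i) - p\|_1 \;=\; \frac{1}{1-r}\sum_{k \in K} p(k)\,|r - r_k|.$$
The key observation is that $r = \sum_k p(k)\,r_k$, so the signed $p$-weighted sum of $(r - r_k)$ is zero; splitting into positive and negative parts gives $\sum_k p(k)\,|r - r_k| = 2\sum_{k:\,r_k > r} p(k)(r_k - r) \leq 2\sum_k p(k)\,r_k = 2r$. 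Hence $\|p^x(\cdot|i) - p\|_1 \leq \frac{2r}{1-r} = \frac{2\bar{x}^p(R)}{\bar{x}^p(NR)}$, which is in fact a strengthening of the stated bound, since $\|1/p\|_\infty \geq 1$.

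For part (ii), fix $i \in R$. By definition $\bar{x}^p(i) > 0$ and there exists $k^* \in K$ violating the non-revealing condition, so $|x(i|k^*) - \bar{x}^p(i)| > \varepsilon\, \bar{x}^p(i)$. Writing $p^x(k|i) - p(k) = \frac{p(k)(x(i|k) - \bar{x}^p(i))}{\bar{x}^p(i)}$, I lower-bound the $\ell^1$-norm by the single $k^*$-term:
$$\|p^x(\cdot|i) - p\|_1 \;\geq\; \frac{p(k^*)\,|x(i|k^*) - \bar{x}^p(i)|}{\bar{x}^p(i)} \;>\; \varepsilon\, p(k^*) \;\geq\; \varepsilon\, \min_{k \in K} p(k).$$

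Both parts are bookkeeping with Bayes' formula; the only nontrivial maneuver is the observation in part (i) that the \emph{aggregate} revealing mass $r$ is the $p$-weighted mean of the \emph{conditional} revealing masses $r_k$, which turns a crude bound on $\sum_k p(k)|r-r_k|$ into one that is linear in $r$ and hence vanishes as the revealing mass shrinks. I anticipate no real obstacle.
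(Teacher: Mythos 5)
Your proof is correct and follows essentially the same route as the paper: for (i) you plug the concise identity into Bayes' rule and control the deviation by the revealing mass, and for (ii) you isolate a coordinate $k^*$ violating the non-revealing condition, which is word-for-word the paper's argument. The only difference is the final inequality in (i): the paper bounds $|x(R[x,\varepsilon,p]|k)-\bar x^p(R[x,\varepsilon,p])|$ by the sum of the two terms and uses $x(R[x,\varepsilon,p]|k)\le \left\|\frac{1}{p}\right\|_\infty \bar x^p(R[x,\varepsilon,p])$, which is where the factor $\left\|\frac{1}{p}\right\|_\infty$ comes from, whereas your mean-zero cancellation $\sum_k p(k)|r-r_k|\le 2r$ gives the slightly sharper bound $2\,\bar x^p(R[x,\varepsilon,p])/\bar x^p(NR[x,\varepsilon,p])$, which implies the stated one since $\left\|\frac{1}{p}\right\|_\infty\ge 1$.
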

\begin{proof}
\begin{enumerate}[(i)]
\item
Let $i \in NR[x,\varepsilon,p]$ and $k \in K$. We have
\begin{eqnarray*}
\left|p^{x}(k|i)-p(k)\right|&=&\left| \frac{x(NR[x,\varepsilon,p]|k)p(k)}{\bar{x}^p(NR[x,\varepsilon,p])}-p(k) \right|
\\
&=& \left| \frac{x(NR[x,\varepsilon,p]|k)-\bar{x}^p(NR[x,\varepsilon,p])}{\bar{x}^p(NR[x,\varepsilon,p])} \right| p(k)
\\
&=& \left| \frac{x(R[x,\varepsilon,p]|k)-\bar{x}^p(R[x,\varepsilon,p])}{\bar{x}^p(NR[x,\varepsilon,p])} \right| p(k)
\\
& \leq & \left(\frac{x(R[x,\varepsilon,p]|k)}{\bar{x}^p(NR[x,\varepsilon,p])}+\frac{\bar{x}^p(R[x,\varepsilon,p])}{\bar{x}^p(NR[x,\varepsilon,p])}\right) p(k)
\end{eqnarray*}
Moreover, we have 
\begin{eqnarray*}
\bar{x}^p(R[x,\varepsilon,p])=\sum_{k' \in K} p(k') x(R[x,\varepsilon,p]|k') \geq p(k) x(R[x,\varepsilon,p]|k),
\end{eqnarray*}
hence 
\begin{equation*}
\frac{x(R[x,\varepsilon,p]|k)}{\bar{x}^p(R[x,\varepsilon,p])} \leq \left\|\frac{1}{p}\right\|_{\infty},
\end{equation*}
and thus
\begin{equation*}
\frac{x(R[x,\varepsilon,p]|k)}{\bar{x}^p(NR[x,\varepsilon,p])} \leq \left\|\frac{1}{p}\right\|_{\infty} \frac{\bar{x}^p(R[x,\varepsilon,p])}{\bar{x}^p(NR[x,\varepsilon,p])}.
\end{equation*}
 We deduce that
\begin{eqnarray*}
\left|p^{x}(k|i)-p(k)\right| \leq \left(1+\left\|\frac{1}{p}\right\|_{\infty}\right) \frac{\bar{x}^p(R[x,\varepsilon,p])}{\bar{x}^p(NR[x,\varepsilon,p])}   p(k),
\end{eqnarray*}
and the result follows. 
\item
Let $i \in R[x,\varepsilon,p]$. By definition, there exists $k \in K$ such that either $x(i|k) < (1-\varepsilon)\bar{x}^p(i)$, or $x(i|k) > (1+\varepsilon)\bar{x}^p(i)$. It follows that either 
$p^x(k|i) < (1-\varepsilon) p(k)$, or $p^x(k|i) > (1+\varepsilon) p(k)$, and the result follows. 
\end{enumerate}
\end{proof}
Properties \ref{jump_nr} and \ref{jump_r} imply that while $(p_m)$ stays at a distance larger than $\varepsilon$ from the frontier of $\Delta(K)$, either $\left\|p_{m+1}-p_m\right\|_1$ is larger than $\varepsilon^2$, or 
the order of magnitude of $\left\|p_{m+1}-p_m\right\|_1$ is smaller than the probability that $\left\|p_{m+1}-p_m\right\|$ is larger than $\varepsilon^2$. Since a martingale can not make too many significant ``jumps", this implies an upper bound on the expectation of $\sum_{m \geq 1} \left\|p_{m+1}-p_m\right\|_1$. This argument is made formal in what follows. 
\begin{proof}[Proof of Proposition \ref{variation_1_e}]
For $m \geq 1$, denote $h_m:=(\omega^e_1,x_1,y_1,\dots,\omega^e_{m-1},x_{m-1},y_{m-1},\omega^e_m)$ the random history at stage $m$. Let $R_m:=R[x_m,\varepsilon,p_m]$, and $NR_m:=NR[x_m,\varepsilon,p_m]$.
For each $m \geq 1$,
\begin{eqnarray}
1_{m \leq T}\m{E}^e_{\omega^e,\sigma,\tau}(\left\|p_{m+1}-p_m\right\|_1  | h_m) &=&
1_{m \leq T}\sum_{i \in NR_m} \overline{x_m}^{p_m}(i) \left\|p_m^{x_m}(.|i)-p_m\right\|_1 \nonumber
\\
&+&
1_{m \leq T}\sum_{i \in R_m} \overline{x_m}^{p_m}(i) \left\|p_m^{x_m}(.|i)-p_m\right\|_1 \label{eq_split}
\end{eqnarray}
We start by bounding the first right-hand side term. Let $i \in NR_m$. We have
\begin{align*}
1_{m \leq T} \sum_{i \in NR_m} \overline{x_m}^{p_m}(i) \left\|p_m^{x_m}(.|i)-p_m\right\|_1 &\leq
1_{m \leq T} \sum_{i \in NR_m} \overline{x_m}^{p_m}(i) 2\left\|\frac{1}{p_m}\right\|_{\infty} \frac{\overline{x_m}^{p_m}(R_m)}{\overline{x_m}^{p_m}(NR_m)} & ; \text{Lemma \ref{jump} \ref{jump_nr}} 
\\
&=
1_{m \leq T}  2\left\|\frac{1}{p_m}\right\|_{\infty} \overline{x_m}^{p_m}(R_m)  &
 \\ 
 &\leq 1_{m \leq T} 2 \varepsilon^{-1} \overline{x_m}^{p_m}(R_m).  & ; p_m \notin F_{\varepsilon}\text{ on }\left\{m \leq T \right\} \numberthis \label{ineq_mart_1}
 \end{align*}
Moreover, we have 
\begin{align}
1_{m \leq T} \overline{x_m}^{p_m}(R_m)
&= 1_{m \leq T} \m{P}^{e}_{\omega^e,\sigma,\tau}(\exists i \in R_m, \ p_{m+1}=p_m^{x_m}(.|i) | h_m)  \nonumber
\\
&\leq 1_{m \leq T} \m{P}^e_{\omega^e,\sigma,\tau}(\left\|p_{m+1}-p_m\right\|_1 \geq \varepsilon \min_{k \in K} p_m(k) | h_m)  & \text{Lemma \ref{jump} \ref{jump_r}} \nonumber
\\
&\leq 1_{m \leq T} \m{P}^e_{\omega^e,\sigma,\tau}(\left\|p_{m+1}-p_m\right\|_1 \geq \varepsilon^2 | h_m) & p_m \notin F_{\varepsilon} \nonumber
\\
&= 1_{m \leq T} \m{P}^e_{\omega^e,\sigma,\tau}(\left\|p_{m+1}-p_m\right\|_1^2 \geq \varepsilon^4 | h_m)  & \nonumber
\\
&\leq 1_{m \leq T} \varepsilon^{-4} \m{E}^e_{\omega^e,\sigma,\tau}(\left\|p_{m+1}-p_m\right\|_1^2 | h_m) & \text{Markov inequality} \label{ineq_mart_2}
\end{align}
Combining (\ref{ineq_mart_1}) and (\ref{ineq_mart_2}) yield
\begin{equation}
1_{m \leq T} \sum_{i \in NR_m} \overline{x_m}^{p_m}(i) \left\|p_m^{x_m}(.|i)-p_m\right\|_1
\leq 1_{m \leq T} 2\varepsilon^{-5} \m{E}^e_{\omega^e,\sigma,\tau}(\left\|p_{m+1}-p_m\right\|_1^2 | h_m). \label{ineq_mart_3}
\end{equation}
We now deal with the second right-hand side term in \eqref{eq_split}. 
Let $i \in R_m$. We have 
\begin{align*}
1_{m \leq T} \left\|p_m^{x_m}(.|i)-p_m\right\|_1
&= 1_{m \leq T} \left\|p_m^{x_m}(.|i)-p_m\right\|_1^2/\left\|p_m^{x_m}(.|i)-p_m\right\|_1 
\\ 
&\leq 1_{m \leq T} \left\|p_m^{x_m}(.|i)-p_m\right\|_1^2/\varepsilon^2  \quad \text{Lemma \ref{jump} \ref{jump_r} and} \ p_m \notin F_{\varepsilon} \ \text{on} \ \left\{m \leq T\right\}
\end{align*}
We deduce that 
\begin{eqnarray*} 
1_{m \leq T}\sum_{i \in R_m} \overline{x_m}^{p_m}(i) \left\|p_m^{x_m}(.|i)-p_m\right\|_1
&\leq&  1_{m \leq T} \sum_{i \in R_m} \overline{x_m}^{p_m}(i) \left\|p_m^{x_m}(.|i)-p_m\right\|_1^2/\varepsilon^2
\\
&\leq & 1_{m \leq T} \sum_{i \in I} \overline{x_m}^{p_m}(i) \left\|p_m^{x_m}(.|i)-p_m\right\|_1^2/\varepsilon^2
\\
&=& 1_{m \leq T} \varepsilon^{-2} \m{E}^e_{\omega^e,\sigma,\tau}(\left\|p_{m+1}-p_m\right\|_1^2  | h_m)
\end{eqnarray*}
Combining with \eqref{eq_split} and \eqref{ineq_mart_3}, we get 
\begin{eqnarray*}
1_{m \leq T} \m{E}^{e}_{\omega^e,\sigma,\tau}(\left\|p_{m+1}-p_m\right\|_1 | h_m)
&\leq& 1_{m \leq T} (2 \varepsilon^{-5}+\varepsilon^{-2}) \m{E}^{e}_{\omega^e,\sigma,\tau}(\left\|p_{m+1}-p_m\right\|_1^2  | h_m),
\\
&\leq & 1_{m \leq T} 3\varepsilon^{-5} \m{E}^{e}_{\omega^e,\sigma,\tau}(\left\|p_{m+1}-p_m\right\|_1^2  | h_m),
\end{eqnarray*}
hence
\begin{eqnarray*}
\m{E}^{e}_{\omega^e,\sigma,\tau}\left(\sum_{m=1}^T \left\|p_{m+1}-p_m\right\|_1\right) 
&=& \sum_{m=1}^{+\infty} \m{E}^{e}_{\omega^e,\sigma,\tau}(1_{m \leq T} \m{E}^{e}_{\omega^e,\sigma,\tau}(\left\|p_{m+1}-p_m\right\|_1  | h_m))
\\
&\leq&
3 \varepsilon^{-5} \sum_{m=1}^{+\infty} \m{E}^{e}_{\omega^e,\sigma,\tau}(1_{m \leq T} \m{E}^{e}_{\omega^e,\sigma,\tau}(\left\|p_{m+1}-p_m\right\|_1^2  | h_m))
\\
&=&
3 \varepsilon^{-5} 
\m{E}^{e}_{\omega^e,\sigma,\tau} \left(\sum_{m=1}^T \left\|p_{m+1}-p_m\right\|_1^2\right).
\end{eqnarray*}
Moreover, we have
\begin{align*}
\m{E}^{e}_{\omega^e,\sigma,\tau} \left(\sum_{m=1}^T \left\|p_{m+1}-p_m\right\|_1^2\right) &\leq \m{E}^{e}_{\omega^e,\sigma,\tau} \left(\sum_{m=1}^{+\infty} \left\|p_{m+1}-p_m\right\|_1^2\right)  & 
\\
&\leq \sqrt{|K|} \m{E}^{e}_{\omega^e,\sigma,\tau}\left(\sum_{m=1}^{+\infty} \left\|p_{m+1}-p_m\right\|_2^2\right) & \left\|.\right\|_1 \leq \sqrt{|K|} \left\|.\right\|_2 
\\ 
&\leq \sqrt{|K|} & \text{martingale property} \\ & &\text{(see e.g. \cite[Lemma 3.4, p.31]{sorin02b})},
\end{align*}
and the result follows. 
\end{proof}
The goal of this section now reached:
\begin{proof}[Proof of Proposition \ref{prop:bounded_var}]
Applying successively Proposition \ref{prop:opt_concise} and Proposition \ref{variation_1_e} readily implies Proposition \ref{prop:bounded_var}.
\end{proof}
\section{Coupling between true and approximated belief dynamics and proof of Proposition \ref{comparison_e_f_w}} \label{sec:coupling}
This section finishes on the proof of Proposition \ref{comparison_e_f_w}, which implies Theorem \ref{main_thm}, as explained in Section \ref{sec:model}.
\subsection{Modified game} \label{subsec:modif}
Let $\Gamma=(K,L,\Omega,I,J,\rho,g)$ be a stochastic game with incomplete information on both sides, and $(P,Q)$ be a triangulation of $\Delta(K) \times \Delta(L)$. To prove Proposition \ref{comparison_e_f_w}, 
we aim at making a coupling between histories in $\Gamma^e$ and histories in $\Gamma^f$, in such a way that $(1)$ the two belief dynamics over Player 1's type are close to each other, and $(2)$ the two belief dynamics over Player 2's type are identical. For $(2)$, one difficulty is that in $\Gamma^f$, the belief on Player 2's type splits over the elements of the triangulation. Such a splitting may not be generable by a mixed action in $\Gamma^e$, hence we need to enrich slightly the set of actions of Player 2 in $\Gamma^e$. This produces a game $\Gamma^{\eta}$, that is defined by the following elements:
\begin{itemize}
\item
State space $\Omega^e$,
\item
Action set $X$ for Player 1,
\item
Action set $Y^{\eta}:=Y \times \left\{F,E\right\}$ for Player 2,
\item
Transition function $\rho^{\eta}$, such that $\forall (p,q,\omega), (p',q',\omega')  \in \Omega^{e}, \ \forall (x,y) \in X \times Y^\eta$, 
\begin{eqnarray*}
\rho^{\eta}(p,q,\omega,x,(y,E))&:=&\rho^e(p,q,\omega,x,y)
\\
\rho^{\eta}(p',q',\omega'|p,q,\omega,x,(y,F))&:=&\sum_{q'' \in \Delta(L)} S[q'|q''] \rho^e(p',q'',\omega'|p,q,\omega,x,y)
\end{eqnarray*}
\item
Payoff function $g^{\eta}$ such that for all $(p,q,\omega) \in \Omega^{e}$, for all $(x,y) \in X \times Y^\eta$,
\begin{equation*}
g^{\eta}(p,q,\omega,x,(y,E))=g^{\eta}(p,q,\omega,x,(y,F)):=g^e(p,q,\omega,x,y). 
\end{equation*}
\end{itemize}
Hence, when Player 2 chooses $(y,E)$, the transition is identical to the one produced by $y$ in $\Gamma^e$. When Player 2 chooses $(y,F)$, first $(p',q'',\omega')$ is drawn with probability $\rho^e(p',q'',\omega'|p,q,\omega,x,y)$, and then $q''$ is splitted over $Q$, in the same way as in $\Gamma^f$. 

Strategies in $\Gamma^\eta$ are defined in an analogous way as in $\Gamma^e$, and are denoted respectively by $\Sigma^\eta$ and $T^\eta$ for Player 1 and 2. 
A tuple $(\omega^e,\sigma,\tau) \in \Omega^e \times \Sigma^\eta \times T^\eta$ induces a probability measure $\m{P}^\eta_{\omega^e,\sigma,\tau}$ on the set of infinite histories of the game
$(\Omega^e \times X \times Y^\eta)^{\m{N}}$, and the expectation with respect to this probability measure is denoted by $\m{E}^\eta_{\omega^e,\sigma,\tau}$. As usual, one can define the discounted game $\Gamma^\eta_\lambda$, and its value is denoted by $v^\eta_\lambda$. 

Such a modification of $\Gamma^e$ does not modify discounted values. The intuition is that playing $(y,F)$ instead of $(y,E)$ provides Player 1 with more information, hence enlarging the set of strategies of Player 2 from $Y$ to $Y^\eta$ does not bring any advantage to Player 2. 
\begin{proposition} \label{equal_e_eta}
For all $\lambda \in (0,1]$, $v^{\eta}_\lambda=v^{e}_\lambda$. Moreover, each player has a pure optimal stationary strategy. 
\end{proposition}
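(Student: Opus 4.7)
My plan is to establish $v^\eta_\lambda = v^e_\lambda$ by checking that $v^e_\lambda$ itself satisfies the Shapley equation associated with $\Gamma^\eta_\lambda$, and then invoking the uniqueness part of \cite[Proposition VII.1.4]{MSZ}; this proposition applies to $\Gamma^\eta_\lambda$ because $X$ and $Y^\eta = Y \times \{E,F\}$ are compact metric and $g^\eta,\rho^\eta$ are continuous. Pure optimal stationary strategies will then be transported from $\Gamma^e_\lambda$ via the natural embedding $y \mapsto (y,E)$.

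The content of the argument is a weak-domination claim: for Player 2, the $F$-actions are weakly dominated by the $E$-actions once the continuation payoff is taken to be $v^e_\lambda$. Concretely, for any $(p,q,\omega) \in \Omega^e$, $x \in X$ and $y \in Y$, one has $g^\eta((p,q,\omega),x,(y,F)) = g^\eta((p,q,\omega),x,(y,E)) = g^e((p,q,\omega),x,y)$, while by definition of $\rho^\eta$,
\[
\m{E}^\eta_{p,q,\omega,x,(y,F)}(v^e_\lambda) = \sum_{p',q'',\omega'} \rho^e(p',q'',\omega' \mid p,q,\omega,x,y) \sum_{q'} S[q'|q''] \, v^e_\lambda(p',q',\omega').
\]
By construction the splitting $S[q''] \in \Delta(Q)$ has mean $q''$, and $v^e_\lambda(p',\cdot,\omega')$ is convex in the second variable (as recalled at the end of Section \ref{subsec:model}), so Jensen's inequality yields $\sum_{q'} S[q'|q''] v^e_\lambda(p',q',\omega') \geq v^e_\lambda(p',q'',\omega')$, and therefore
\[
\m{E}^\eta_{p,q,\omega,x,(y,F)}(v^e_\lambda) \geq \m{E}^e_{p,q,\omega,x,y}(v^e_\lambda) = \m{E}^\eta_{p,q,\omega,x,(y,E)}(v^e_\lambda).
\]
Consequently, for each fixed $x$, the minimum over $(y,z) \in Y^\eta$ of $\lambda g^\eta + (1-\lambda)\m{E}^\eta(v^e_\lambda)$ is attained on $Y \times \{E\}$, where it coincides with the minimum over $Y$ of $\lambda g^e + (1-\lambda)\m{E}^e(v^e_\lambda)$. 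Taking the $\max$ over $x$ and invoking Proposition \ref{prop_recursive}(\ref{prop:shapley_e}) shows that $v^e_\lambda$ solves the Shapley equation of $\Gamma^\eta_\lambda$, and uniqueness gives $v^\eta_\lambda = v^e_\lambda$.

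For the pure optimal stationary strategies, I will take those $\sigma^*, \tau^*$ provided for $\Gamma^e_\lambda$ by Proposition \ref{prop_recursive}(\ref{prop:stationary_e}). The strategy $\sigma^*$ is, unchanged, pure stationary in $\Gamma^\eta_\lambda$, and the display above shows that $\sigma^*(\omega^e)$ still realises the $\max$ in the Shapley one-shot game at every $\omega^e$, so it is optimal in $\Gamma^\eta_\lambda$. Defining $\tau^\eta(\omega^e) := (\tau^*(\omega^e), E)$ gives a pure stationary strategy whose action attains the $\min$ of the Shapley one-shot game at every $\omega^e$ (since that $\min$ is attained on $Y \times \{E\}$ and $\tau^*(\omega^e)$ attains the $\min$ over $Y$), hence is optimal in $\Gamma^\eta_\lambda$. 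The only non-routine step is the Jensen inequality above, which is settled as soon as the convexity of $v^e_\lambda = v_\lambda$ in $q$ is quoted from Section \ref{subsec:model}; the rest is a direct verification followed by a uniqueness appeal.
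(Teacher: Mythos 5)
Your proposal is correct and follows essentially the same route as the paper: both show that $v^e_\lambda$ satisfies the Shapley equation of $\Gamma^\eta_\lambda$ by combining the convexity of $v^e_\lambda$ in $q$ with the fact that the splitting $S[q'']$ has mean $q''$ (so $F$-actions are weakly dominated given continuation $v^e_\lambda$), and then invoke uniqueness. The only cosmetic difference is at the end: the paper notes that the one-shot games can be solved in pure actions (hence the mixed-extension value coincides and pure optimal stationary strategies exist), while you make this explicit by transporting the optimal stationary strategies of $\Gamma^e_\lambda$ via $y \mapsto (y,E)$ — the same content.
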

The proof is standard and is postponed to Subsection \ref{subsec:missing}. 

Hence, when proving Proposition \ref{comparison_e_f_w}, we will work with the game $\Gamma^\eta$ instead of the game $\Gamma^e$. 
Let us explain now why the main result of Section \ref{sec:concise}, namely Proposition \ref{prop:bounded_var}, holds in $\Gamma^{\eta}$ too. This proposition stems from Propositions \ref{prop:opt_concise} and \ref{variation_1_e}, hence this is enough to check that they hold in $\Gamma^\eta$. 

Definitions of $\varepsilon$-concise strategy, $\varepsilon$-silent mapping and $\varepsilon$-convexification do not depend on Player 2's strategy set, hence can be kept as they are, and Propositions \ref{prop_conservation}, \ref{prop_concise_mapping}, \ref{prop_comb}, \ref{concise_close}, and \ref{prop_conv} apply. Proposition \ref{opt_concise} holds too, with a slight modification of the proof. For completeness, this is done in Subsection \ref{subsec:missing}. The proof of Proposition \ref{prop:opt_concise} consists in applying successively Proposition \ref{prop_concise_mapping}, Proposition \ref{prop_conv} and Proposition \ref{opt_concise}. Hence, Proposition \ref{prop:opt_concise} also holds when replacing $\Gamma^e_\lambda$ by $\Gamma^{\eta}_\lambda$. 

As far as Proposition \ref{variation_1_e} is concerned, its proof only uses properties of the process $(p_m)$ under an $\varepsilon$-concise strategy, hence can be reproduced word for word in the game $\Gamma^{\eta}$. Finally, we obtain the equivalent of Proposition \ref{prop:bounded_var}:
\begin{proposition} \label{prop:bounded_var_eta}
Let $\Gamma=(K,L,\Omega,I,J,\rho,g)$ be an absorbing game with incomplete information on both sides.
Let $\varepsilon \in (0,1/4]$, $T:=\max\left\{m \geq 1, p_m \in \Delta(K) \setminus F_{\varepsilon} \right\}$, and $\lambda \in (0,1]$. Then Player 1 has a (pure) $12\varepsilon \left\|g\right\|_\infty$-optimal stationary strategy $\sigma$ in $\Gamma^{\eta}_\lambda$ that is $\varepsilon$-ambiguous, and such that for all $\tau \in T^{\eta}$,
\begin{equation*}
\m{E}^{\eta}_{\omega^0,\sigma,\tau}\left(\sum_{m=1}^{T} \left\|p_{m+1}-p_m\right\|_1\right) \leq 3 \sqrt{|K|} \varepsilon^{-5}. 
\end{equation*}
\end{proposition}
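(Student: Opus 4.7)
The plan is to mirror exactly the proof of Proposition \ref{prop:bounded_var}, which was obtained by combining Proposition \ref{prop:opt_concise} with Proposition \ref{variation_1_e}. Since the structure of $\Gamma^\eta$ differs from $\Gamma^e$ only through the enriched action set $Y^\eta = Y \times \{E,F\}$ of Player 2, and since the splitting flag in $\{E,F\}$ only alters the $q$-component of the state (never the $p$-component, never the payoff, never the dynamics on $\Omega$), the analogues in $\Gamma^\eta$ of the two ingredients should go through with essentially no change.

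First I would transfer Proposition \ref{prop:opt_concise} to $\Gamma^\eta$. Let $\sigma^*$ be a pure stationary optimal strategy of Player 1 in $\Gamma^\eta_\lambda$ (which exists by the $\Gamma^\eta$-analogue of Proposition \ref{prop_recursive}, Proposition \ref{equal_e_eta}). Define $\sigma(p,q,\omega) := c_{\varepsilon_0}(\sigma^*(p,q,\omega), p)$ with $\varepsilon_0 = (1-\sqrt{1-4\varepsilon})/2$. Since the $\varepsilon$-silent mapping $c_{\varepsilon_0}$ only depends on $x$ and $p$, Propositions \ref{prop_concise_mapping} and \ref{prop_conv} apply verbatim: $\sigma$ is $\varepsilon$-concise, $\varepsilon$-ambiguous, and is pointwise a $6\varepsilon$-convexification of $\sigma^*$ at $p$. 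For the $12\varepsilon \|g\|_\infty$-optimality, the only nontrivial step is to re-run Proposition \ref{opt_concise} inside $\Gamma^\eta$. The argument only uses (i) the Shapley equation for the value, (ii) concavity of $v^\eta_\lambda$ in $p$, and (iii) the absorbing structure in $\Omega$, via Lemma \ref{lemma:future_payoff}. The key point to verify is that Lemma \ref{lemma:future_payoff} still holds with $\tau$ ranging over $T^\eta$: an action $(y,F)$ for Player 2 simply splits the posterior $q'$ into a randomized vertex of $Q$, which is a measure-preserving operation on the $q$-marginal and commutes with convexification of Player 1's action in $p$ (Property \ref{opt_transport}). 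Hence the stage-by-stage estimate goes through with the same constants, and the whole proof of Proposition \ref{opt_concise} transfers.

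Second I would transfer Proposition \ref{variation_1_e} to $\Gamma^\eta$. Inspecting its proof, every quantity manipulated is a function of $(x_m, p_m)$ alone: the decomposition of $\mathbb{E}(\|p_{m+1}-p_m\|_1 \mid h_m)$ into $NR_m$ and $R_m$ components via Lemma \ref{jump}, the bound $\overline{x_m}^{p_m}(R_m) \leq \varepsilon^{-4}\mathbb{E}(\|p_{m+1}-p_m\|_1^2 \mid h_m)$, and the martingale bound $\mathbb{E}(\sum \|p_{m+1}-p_m\|_2^2) \leq 1$. The $E/F$ choice of Player 2 does not change the law of $p_{m+1}$ given $h_m$ and $(x_m, y_m)$: it only affects the marginal on $q_{m+1}$. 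Therefore $(p_m)$ remains a $\Delta(K)$-valued martingale with the same Bayesian update rule as in $\Gamma^e$, and Proposition \ref{variation_1_e} holds word for word with the same constant $3\sqrt{|K|}\varepsilon^{-5}$.

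Combining the two, the strategy $\sigma$ constructed above is a pure, stationary, $\varepsilon$-ambiguous, $12\varepsilon\|g\|_\infty$-optimal strategy in $\Gamma^\eta_\lambda$ that is moreover $\varepsilon$-concise, hence satisfies the required $L^1$-variation estimate against every $\tau \in T^\eta$. The main obstacle is purely bookkeeping, namely verifying line by line that introducing the splitting flag $F$ in Player 2's action set does not break the absorbing coupling inside Lemma \ref{lemma:future_payoff}; once this is seen (because $F$ leaves the $(p,\omega)$-dynamics untouched and only applies a measure-preserving, $v^\eta_\lambda$-mean-preserving kernel on $q$), no further work is needed and the proposition follows.
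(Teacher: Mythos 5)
Your proposal is correct and follows essentially the same route as the paper: transfer Proposition \ref{prop:opt_concise} to $\Gamma^\eta$ (existence of pure stationary optima via Proposition \ref{equal_e_eta}, then the silent-mapping construction and a re-run of Proposition \ref{opt_concise}), note that Proposition \ref{variation_1_e} only concerns the law of $(p_m)$ and so holds verbatim, and combine. The only cosmetic difference is in how the $F$-action is handled: rather than asserting that Lemma \ref{lemma:future_payoff} "still holds over $T^\eta$", the paper applies that lemma unchanged but to the $q$-averaged functions $u(p,q,\omega)=\sum_{q'}S[q'|q]\gamma^\eta_\lambda(p,q',\omega,\sigma',\tau)$ and $v(p,q,\omega)=\sum_{q'}S[q'|q]v^\eta_\lambda(p,q',\omega)$, which satisfy its hypotheses since averaging in $q$ preserves agreement off $\omega^0$ and concavity/Lipschitzness in $p$ — exactly the point you identify informally.
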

\begin{remark}
Instead of defining the game $\Gamma^e$ in Section \ref{sec:model} and then the game $\Gamma^{\eta}$ in this section, we could have started right from the beginning with the game $\Gamma^{\eta}$, without considering $\Gamma^e$. Nonetheless, this would have made Sections \ref{sec:auxiliary} and \ref{sec:concise} heavier and less readable. Since the game $\Gamma^{\eta}$ is only useful at the end of the proof of Proposition \ref{comparison_e_f_w}, it is preferable to introduce it only in this section.  
\end{remark}
Recall that for $\varepsilon>0$, $F_\varepsilon:=\left\{p \in \Delta(K) \ | \exists k \in K, \  p(k) \leq \varepsilon \right\}$. In addition, the following technical result is needed:
\begin{proposition} \label{lipf}
Let $\varepsilon>0$ and $\Gamma=(K,L,\Omega,I,J,\rho,g)$ be a stochastic game with incomplete information on both sides that satisfies Assumption \ref{asslip}. Let $\alpha>0$ and $(P,Q)$ be a triangulation of $\Delta(K) \times \Delta(L)$ with stepsize smaller than $\alpha$. Let $\lambda \in (0,1]$, $p \in F_{\varepsilon} \cap P$, and $(q,\omega) \in Q \times \Omega$. Then
\begin{equation*}
v^f_{\lambda}(p,q,\omega) \geq v_\lambda^{\eta}(p,q,\omega)-\left\|v^f_{\lambda}-v^{\eta}_{\lambda}\right\|_{F_0}-(4 \varepsilon+\alpha) \left\|g\right\|_{\infty},
\end{equation*}
where $\left\|v^f_{\lambda}-v^{\eta}_{\lambda}\right\|_{F_0}:=\sup_{(p,q,\omega) \in (F_0 \cap P) \times Q \times \Omega} \left|v^f_{\lambda}(p,q,\omega)-v^{\eta}_{\lambda}(p,q,\omega)\right|$.
\end{proposition}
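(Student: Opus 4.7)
The plan is to build in $\Gamma^f_\lambda$ starting from $(p,q,\omega)$ an explicit Player~1 mixed action whose expected payoff against any Player~2 response is bounded below by the required quantity; when $\omega$ is absorbing the inequality is trivial ($v^f_\lambda$ and $v^\eta_\lambda$ both reduce to the same $(p,q)$-linear absorbed payoff), so I assume $\omega$ is non-absorbing. Let $k_0\in K$ satisfy $p(k_0)\leq\varepsilon$ and define $p^0\in\Delta(K)$ by $p^0(k_0)=0$, $p^0(k)=p(k)/(1-p(k_0))$ for $k\neq k_0$, so that $p^0\in F_0$ and $\|p-p^0\|_1=2p(k_0)\leq 2\varepsilon$. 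Since $v^\eta_\lambda=v^e_\lambda$ is $\|g\|_\infty$-Lipschitz in $p$ with respect to $\|\cdot\|_1$, this already gives $v^\eta_\lambda(p^0,q,\omega)\geq v^\eta_\lambda(p,q,\omega)-2\varepsilon\|g\|_\infty$, so it suffices to prove
\[
v^f_\lambda(p,q,\omega)\;\geq\;v^\eta_\lambda(p^0,q,\omega)-\|v^f_\lambda-v^\eta_\lambda\|_{F_0}-(2\varepsilon+\alpha)\|g\|_\infty.
\]

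By Proposition~\ref{equal_e_eta} I pick a pure stationary optimal strategy in $\Gamma^\eta_\lambda$, and let $\tilde x\in X$ be its action at state $(p^0,q,\omega)$. Using the standard dominance argument justifying Assumption~\ref{asslip} (any Player~1 action with positive mass on $i^*$ is strictly dominated), I may assume $\tilde x(i^*|k)=0$ for every $k\in K$. Define the \emph{separating} action $x\in X$ by $x(i^*|k_0)=1$ and $x(i|k)=\tilde x(i|k)$ for $k\neq k_0$. A direct computation yields $\bar x^p(i^*)=p(k_0)$, $p^x(\cdot|i^*)=\delta_{k_0}\in F_0\cap P$, and, for $i\neq i^*$, $\bar x^p(i)=(1-p(k_0))\bar{\tilde x}^{p^0}(i)$ together with $p^x(\cdot|i)=(p^0)^{\tilde x}(\cdot|i)\in F_0$. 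Since the triangulation from Proposition~\ref{prop:ex_tr} respects the faces of $\Delta(K)$, each splitting $S[\,\cdot\,|\,p^x(\cdot|i)\,]$ is supported on $F_0\cap P$; hence after stage~$1$ of $\Gamma^f$ with Player~1 playing $x$, the $p$-component of the state lies in $F_0\cap P$ almost surely.

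Fix any $y\in Y$ and set $y^*:=\bar y^q(j^*)$. Using Assumption~\ref{asslip} together with $v^f_\lambda(\cdot,\cdot,\omega^1)\equiv-\|g\|_\infty$ and $v^f_\lambda(\cdot,\cdot,\omega^2)\equiv\|g\|_\infty$, I decompose $\lambda g^f(p,q,\omega,x,y)+(1-\lambda)\m{E}^f_{p,q,\omega,x,y}(v^f_\lambda)$ over the four events for the realized pair $(i,j)$: $(i^*,j^*)$ absorbs to $\tfrac12(\omega^1+\omega^2)$ with continuation~$0$; $(i^*,j\neq j^*)$ is the ``bad'' event, of probability $p(k_0)(1-y^*)\leq\varepsilon$, contributing at most $\varepsilon\|g\|_\infty$ to the loss; $(i\neq i^*,j^*)$ absorbs to $\omega^2$ with continuation $+\|g\|_\infty$; and $(i\neq i^*,j\neq j^*)$ has $p$-posterior in $F_0\cap P$, where the inequalities $v^f_\lambda\geq v^\eta_\lambda-\|v^f_\lambda-v^\eta_\lambda\|_{F_0}$ and the $\|g\|_\infty$-Lipschitzness of $v^\eta_\lambda$ in $p$ jointly bound the error with respect to the analogous contribution in $\Gamma^\eta$ by at most $\alpha\|g\|_\infty+\|v^f_\lambda-v^\eta_\lambda\|_{F_0}$. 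The Shapley inequality for $\tilde x$ in $\Gamma^\eta_\lambda$ at $(p^0,q,\omega)$, applied against Player~2's response $(y,F)\in Y^\eta$, reads
\[
\lambda g^e(p^0,q,\omega,\tilde x,y)+(1-\lambda)\bigl(y^*\|g\|_\infty+A\bigr)\;\geq\;v^\eta_\lambda(p^0,q,\omega),
\]
where $A$ is the $\Gamma^\eta$-expectation of $v^\eta_\lambda$ restricted to $\{i\neq i^*,j\neq j^*\}$ and the $y^*\|g\|_\infty$ term arises from absorption to $\omega^2$ under $(i\neq i^*,j^*)$ (using $\tilde x(i^*|\cdot)=0$). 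Summing the four $\Gamma^f$-cases and substituting this Shapley inequality makes both the $y^*\|g\|_\infty$ and the $\lambda g^e(p^0,q,\omega,\tilde x,y)$ terms cancel, leaving
\[
\lambda g^f(p,q,\omega,x,y)+(1-\lambda)\m{E}^f_{p,q,\omega,x,y}(v^f_\lambda)\;\geq\;v^\eta_\lambda(p^0,q,\omega)-(2\varepsilon+\alpha)\|g\|_\infty-\|v^f_\lambda-v^\eta_\lambda\|_{F_0}
\]
for every $y\in Y$. Applying the Shapley equation for $\Gamma^f_\lambda$ (Proposition~\ref{prop_recursive}, part~\ref{prop:shapley_f}) and combining with the initial Lipschitz estimate concludes the proof.

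The hard part is the cancellation in the display above: the gain $y^*\|g\|_\infty$ from Player~2's own threat $j^*$ forcing absorption to $\omega^2$ in $\Gamma^f$ must line up exactly with the same contribution inside the $\Gamma^\eta_\lambda$ Shapley equation. This is precisely why the intermediate game $\Gamma^\eta$ and Proposition~\ref{equal_e_eta} were introduced, and why the comparison uses $(y,F)$ (rather than $(y,E)$) as Player~2's response in $\Gamma^\eta$: only under the $F$-option do the $q$-splittings in $\Gamma^\eta$ match those of $\Gamma^f$, so that the $(i\neq i^*,j\neq j^*)$ contributions match term by term. A minor technical point, that the triangulation sends beliefs supported on a face of $\Delta(K)$ to vertices of $P$ lying on that face, is built into the Eaves--Gallie-type construction underlying Proposition~\ref{prop:ex_tr}.
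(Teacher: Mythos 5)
Your proof is correct and takes essentially the same route as the paper's: decompose $p$ into $\delta_{k_0}$ and a face belief $p^0\in F_0$, let Player 1 play $i^*$ on type $k_0$ and an optimal $\Gamma^\eta_\lambda$-action at $p^0$ otherwise, and compare the $\Gamma^f_\lambda$ Shapley expression with the $\Gamma^\eta_\lambda$ Shapley inequality against Player 2's response $(y,F)$, paying $O(\varepsilon)\|g\|_\infty$ for the $k_0$-branch, $\alpha\|g\|_\infty$ for the splitting, $\|v^f_\lambda-v^\eta_\lambda\|_{F_0}$ on the face, and $2\varepsilon\|g\|_\infty$ for the Lipschitz step. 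The only differences are cosmetic: the paper bounds the whole $i=i^*$ branch crudely by $\pm\|g\|_\infty$ instead of your four-event case analysis exploiting $\omega^1,\omega^2$, it does not treat absorbing $\omega$ separately, and your appeal to a face-respecting triangulation is unnecessary since any splitting of a belief in $F_0$ is automatically supported on $F_0\cap P$ by nonnegativity of barycentric coordinates.
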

\begin{remark}
This result is the reason why we introduced Assumption \ref{asslip}, and the author does not know whether it holds when dropping that assumption. Fortunately, recall that Assumption \ref{asslip} is w.l.o.g. for the proof of Theorem \ref{main_thm}. Note that if $v^f_\lambda$ is $\left\|g\right\|_\infty$-Lipschitz with respect to its first variable, the result is trivial since $v^\eta_\lambda$ is always $\left\|g\right\|_\infty$-Lipschitz.
\end{remark}
The proof is also postponed to Subsection \ref{subsec:missing}. 
\subsection{Translation mapping}
Fix a stochastic game with incomplete information on both sides $\Gamma=(K,L,\Omega,I,J,\rho,g)$. 
This subsection introduces an important tool that will be used in the coupling between $\Gamma^{\eta}$ and $\Gamma^f$. 
Denote $X_0:=\left\{(x,p,p') \in X \times \Delta(K)^2, \forall (i,k) \in I \times K, \ {p}'(k)+p^x(k | i)-p(k) \geq 0 \ \text{and} \ p'(k)>0 \right\}$. 
\begin{definition} \label{def_translation}
Define the \textit{translation mapping} $T: X \times \Delta(K)^2 \rightarrow X$ by: for all $(k,i) \in K \times I$,
\\
$$
[T(x,p,p')](i |k ) = \left\{
    \begin{array}{ll}
      \frac{\bar{x}^p(i)}{p'(k)} \left[{p}'(k)+p^x(k | i)-p(k)\right]
 & \mbox{when} \ (x,p,p') \in X_0
 \\
      x(i|k)  & \mbox{otherwise}.
    \end{array}
\right.
$$
\end{definition}
Note that the definition of $T(x,p,p')$ in the second case is simply a convention that will be convenient later on. 

\begin{proposition} \label{prop_translation}
Let $(x,p,p') \in X \times \Delta(K)^2$ and $x':=T(x,p,p')$.  Then the following assertions hold:
\begin{enumerate}[(i)]
\item \label{action_conservation}
\begin{equation*}
\forall i \in I, \quad \overline{x'}^{p'}(i)=\bar{x}^p(i)
\end{equation*}
\item \label{error_conservation}
Assume $(x,p,p') \in X_0$. Then
\begin{equation*}
\forall i \in I \ \forall k \in K, \quad {p'}^{x'}(k|i)-p^{x}(k|i)=p'(k)-p(k).
\end{equation*}
\item
Assume that for all $k \in K$, $p(k)>0$. Then 
\begin{equation*}
\forall i \in I, \ \forall k \in K, \quad |x'(i|k) - x(i|k)| \leq \left\|p-p'\right\|_{\infty}\left\|\frac{1}{p}\right\|_{\infty}
\end{equation*}
\item  \label{comp_trans_action}
Let $(\omega,q,y) \in \Omega \times \Delta(L) \times Y^\eta$. Then
\begin{equation*}
\left|g^\eta(p',q,\omega,x',y)-g^\eta(p,q,\omega,x,y)\right| \leq \left\|g\right\|_{\infty} \left\|p'-p\right\|_1
\end{equation*}
\end{enumerate}
\end{proposition}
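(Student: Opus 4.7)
The proposition is a bundle of algebraic identities and norm estimates for the translated action $x':=T(x,p,p')$, and my plan is to prove all four assertions by direct computation, unified by the single Bayes identity
$\bar{x}^p(i)\,p^x(k\mid i)=x(i\mid k)\,p(k)$,
which is valid when $\bar{x}^p(i)>0$ and is preserved under the standing convention $p^x(k\mid i)=p(k)$ when $\bar{x}^p(i)=0$. I would systematically split each assertion into the case $(x,p,p')\in X_0$, where the nontrivial formula for $x'$ is in force, and the ``otherwise'' case where $x'=x$ and every assertion is either trivial or reduces to the standard $\|g\|_\infty$-Lipschitz dependence of $g^\eta$ on $p$.

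For (i), I would write $p'(k)\,x'(i\mid k)=\bar{x}^p(i)\bigl[p'(k)+p^x(k\mid i)-p(k)\bigr]$ and sum over $k$; since $\sum_k p'(k)=\sum_k p^x(k\mid i)=\sum_k p(k)=1$, the bracket sums to $1$ and the conclusion $\overline{x'}^{p'}(i)=\bar{x}^p(i)$ drops out. Part (ii) follows at once by plugging (i) into the Bayes formula ${p'}^{x'}(k\mid i)=x'(i\mid k)\,p'(k)/\overline{x'}^{p'}(i)$: the denominator cancels, giving ${p'}^{x'}(k\mid i)=p'(k)+p^x(k\mid i)-p(k)$, which rearranges to the claimed identity; the boundary case $\bar{x}^p(i)=0$ is handled by the convention $p^x(k\mid i)=p(k)={p'}^{x'}(k\mid i)$, so both sides still vanish.

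For (iii), using the Bayes identity I would rewrite the translation formula as $x'(i\mid k)=\bar{x}^p(i)+\tfrac{p(k)}{p'(k)}\bigl[x(i\mid k)-\bar{x}^p(i)\bigr]$, so that
$$x'(i\mid k)-x(i\mid k)=\bigl[x(i\mid k)-\bar{x}^p(i)\bigr]\cdot\frac{p(k)-p'(k)}{p'(k)}.$$
The bound then follows from $|x(i\mid k)-\bar{x}^p(i)|\le 1$, $|p(k)-p'(k)|\le\|p-p'\|_\infty$, and the control of $1/p'(k)$ in terms of $\|1/p\|_\infty$ afforded by the defining inequality $p'(k)+p^x(k\mid i)-p(k)\ge 0$ of $X_0$; in the ``otherwise'' case $x'=x$ and the inequality is trivial. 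For (iv), the cleanest route is to evaluate $p'(k)x'(i\mid k)-p(k)x(i\mid k)$ directly: the term $\bar{x}^p(i)\,p^x(k\mid i)$ cancels $p(k)\,x(i\mid k)$ by the Bayes identity, leaving simply $\bar{x}^p(i)\bigl[p'(k)-p(k)\bigr]$. Substituting into
$$g^\eta(p',q,\omega,x',y)-g^\eta(p,q,\omega,x,y)=\sum_{k,\ell,i,j}\bigl[p'(k)x'(i\mid k)-p(k)x(i\mid k)\bigr]q(\ell)y(j\mid\ell)\,g(k,\ell,\omega,i,j),$$
the identities $\sum_i\bar{x}^p(i)=\sum_\ell q(\ell)=\sum_j y(j\mid\ell)=1$ factor out and produce the bound $\|g\|_\infty\|p'-p\|_1$.

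The main obstacle, and the only place where anything more than bookkeeping is required, is reconciling the $1/p'(k)$ that naturally appears in (iii) with the stated $\|1/p\|_\infty$: one has to exploit the $X_0$ constraint $p'(k)\ge p(k)-p^x(k\mid i)$ together with the hypothesis $p(k)>0$ to trade denominators in $p'$ for denominators in $p$. Once this small point is nailed down, each of the four assertions is essentially a three-line calculation resting on the Bayes identity and the explicit form of $T$.
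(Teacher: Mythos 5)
Your parts (i), (ii) and (iv) are, up to reorganization, exactly the paper's computations: (i) and (ii) are the same summation and cancellation of $\bar{x}^p(i)$, and in (iv) the identity $p'(k)x'(i|k)-p(k)x(i|k)=\bar{x}^p(i)[p'(k)-p(k)]$ is precisely the paper's intermediate step. Two small remarks there: in the boundary case $\bar{x}^p(i)=0$ of (ii), the convention gives ${p'}^{x'}(k|i)=p'(k)$, not $p(k)$, so both sides of the claimed identity equal $p'(k)-p(k)$ rather than ``vanish'' (the identity still holds, and the paper does not even discuss this case); and your blanket claim that the non-$X_0$ case is trivial is not right for (i), where $x'=x$ gives $\overline{x'}^{p'}(i)=\bar{x}^{p'}(i)\neq\bar{x}^p(i)$ in general --- though the paper's own proof of (i) also silently treats only the $X_0$ case, so this is a shared, not a new, imprecision.

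The genuine gap is in (iii). Your factorization $x'(i|k)-x(i|k)=[x(i|k)-\bar{x}^p(i)]\cdot\frac{p(k)-p'(k)}{p'(k)}$ is correct, but the plan to bound the three factors separately fails at the last one: the $X_0$ inequality $p'(k)\geq p(k)-p^x(k|i)$ gives \emph{no} control of $1/p'(k)$ by $\left\|\frac{1}{p}\right\|_{\infty}$. For instance, if $x$ is non-revealing ($x(\cdot|k)$ independent of $k$), then $p^x(k|i)=p(k)$ and $X_0$ only requires $p'(k)>0$, so $1/p'(k)$ is unbounded while $\left\|\frac{1}{p}\right\|_{\infty}$ is fixed; the bound (iii) survives only because the factor $|x(i|k)-\bar{x}^p(i)|$ vanishes simultaneously, which shows the two factors cannot be estimated independently. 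The paper's proof keeps them coupled: since $\frac{p(k)}{p'(k)}[x(i|k)-\bar{x}^p(i)]=x'(i|k)-\bar{x}^p(i)$ (a rearrangement of your own rewritten formula for $x'$), one has $x'(i|k)-x(i|k)=\frac{p(k)-p'(k)}{p(k)}\,[x'(i|k)-\bar{x}^p(i)]$, and then $|x'(i|k)-\bar{x}^p(i)|\leq 1$ yields the bound with the denominator $p(k)$, not $p'(k)$. Equivalently, what your route really needs is the coupled inequality $p(k)\,|x(i|k)-\bar{x}^p(i)|\leq p'(k)$; this is true on $X_0$, but proving it requires, when $x(i|k)>\bar{x}^p(i)$, the barycentric identity $\sum_{i'}\bar{x}^p(i')\,p^x(k|i')=p(k)$ to exhibit some action $i'$ with $p(k)-p^x(k|i')\geq\bar{x}^p(i)[p^x(k|i)-p(k)]$ --- not just the single constraint attached to the pair $(i,k)$. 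As written, your argument for (iii) does not go through; with the regrouping above it becomes the paper's proof.
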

\begin{proof}
\begin{enumerate}[(i)]
\item
For all $i \in I$, we have
\begin{eqnarray*}
\overline{x'}^{p'}(i)&=&\sum_{k \in K} x'(i|k)p'(k)
\\
&=& \sum_{k \in K} \frac{\bar{x}^p(i)}{p'(k)} \left[{p}'(k)+p^x(k | i)-p(k)\right] p'(k)
\\
&=& \bar{x}^p(i). 
\end{eqnarray*}
\item
Let $i \in I$ and $k \in K$. We have
\begin{eqnarray*}
{p'}^{x'}(k|i)&=& \frac{\frac{\bar{x}^p(i)}{p'(k)} \left[{p}'(k)+p^x(k | i)-p(k)\right] p'(k)}{\bar{x}^p(i)}
\\
&=& {p}'(k)+p^x(k | i)-p(k),
\end{eqnarray*}
and the result follows.
\item
When $(x,p,p') \notin X_0$, the result is obvious. Assume $(x,p,p') \in X_0$. Let $(k,i) \in K \times I$. If $\bar{x}^p(i)=0$, then $x'(i|k)=x(i|k)=0$. Assume $\bar{x}^p(i) \neq 0$. We have
 \begin{eqnarray*}
 |x'(i|k) - x(i|k)|&=& \bar{x}^p(i) \left|\frac{{p'}^{x'}(k|i)}{p'(k)}-\frac{p^{x}(k|i)}{p(k)}\right|
 \\
 &=& \bar{x}^p(i) \left|\frac{{p'}^{x'}(k|i)-p^{x}(k|i)}{p(k)}
+ {p'}^{x'}(k|i)\left(\frac{1}{p'(k)}-\frac{1}{p(k)}\right)\right| 
\\
&=&
\left|\bar{x}^p(i) \frac{p'(k)-p(k)}{p(k)}
+ x'(i|k)\frac{p(k)-p'(k)}{p(k)}\right| 
\\
&\leq&  \left\|p-p'\right\|_{\infty} \left\|\frac{1}{p}\right\|_{\infty}.
 \end{eqnarray*}
 \item
 When $(x,p,p') \notin X_0$, the result is straightforward. Assume $(x,p,p') \in X_0$.
 \begin{eqnarray*}
\left|g^\eta(p',q,\omega,x',y)-g^\eta(p,q,\omega,x,y)\right| &=& 
\sum_{k,\ell,i,j} y(j|\ell)q(\ell)|x'(i|k)p'(k)-x(i|k)p(k)|g(k,\ell,\omega,i,j)
\\
&\leq& \left\|g\right\|_{\infty} \sum_{(k,i) \in K \times I} \left|x'(i|k)p'(k)-x(i|k)p(k) \right|
\\
&=& \left\|g\right\|_{\infty} \sum_{(k,i) \in K \times I} \bar{x}^p(i) \left|p'(k)-p(k)\right|
\\
&=& \left\|g\right\|_{\infty}\left\|p'-p\right\|_1. 
\end{eqnarray*}
\end{enumerate}
\end{proof}
The following figure illustrates Properties \ref{action_conservation} and \ref{error_conservation}, in the case of a type set $K:=\left\{k_1,k_2,k_3\right\}$ and an action set $I:=\left\{A,B,C\right\}$. The figure centered in $p'$ is the translation of the figure centered in $p$, hence the name of the mapping $T$. 
\begin{center}
\includegraphics[scale=0.99]{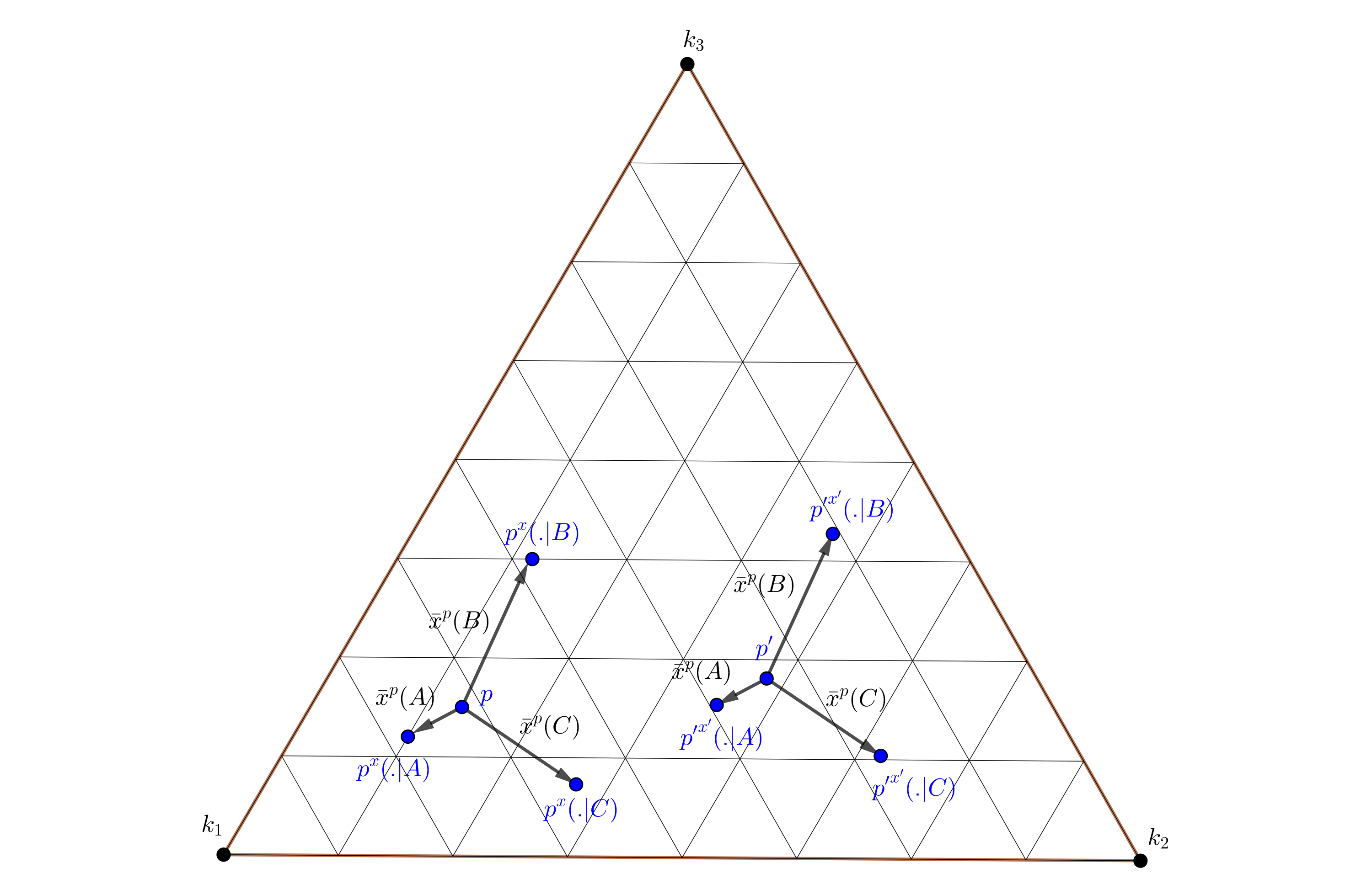}
\end{center}
\subsection{Proof of Proposition \ref{comparison_e_f_w}}
We are now ready to complete the proof of Theorem \ref{main_thm}, by showing Proposition \ref{comparison_e_f_w}. Fix $\Gamma=(K,L,\Omega,I,J,\rho,g)$ an absorbing game with incomplete information on both sides satisfying Assumption \ref{asslip}. 
We proceed by induction on $N= \supp(p) \geq 1$. Consider the case $N=1$, that is, $p=\delta_k$, for some $k \in K$. In this case, the result holds for $\varepsilon=0$ and any triangulation. Indeed, let $(P,Q)$ be a triangulation, $\lambda \in (0,1]$ and $(p,q,\omega) \in \Omega^f$. Let $\sigma$ be an optimal stationary strategy for Player 1 in $\Gamma^{\eta}_{\lambda}$, and $\tau'$ be an optimal strategy of Player 2 in $\Gamma_\lambda^{f}$. The strategy $\sigma$ can be seen as a strategy of $\Gamma^{f}$. Moreover, define a stationary strategy $\tau \in T^{\eta}$ by for all $(p,q,\omega) \in \Omega^f$, $\tau(p,q,\omega):=(\tau'(p,q,\omega),F)$, and $\tau(p,q,\omega)$ is arbitrary otherwise. 
We have $v^{\eta}_\lambda(p,q,\omega) \leq \gamma^{\eta}_{\lambda}(p,q,\omega,\sigma,\tau)=\gamma^{f}_{\lambda}(p,q,\omega,\sigma,\tau') \leq v_\lambda^f(p,q,\omega)$, hence the result holds for $N=1$. 
\\

Assume $N \geq 2$, and that the result holds for $N' \leq N-1$.   Let $\varepsilon \in (0,1/4]$ and $C>0$. Note that an $(\alpha,C)$-triangulation of $\Delta(K)$ induces an $(\alpha,C)$-triangulation on any facet of $\Delta(K)$, and similarly for $\Delta(L)$. Consequently, by induction assumption, there exists $\alpha'>0$ such that for 
any $(\alpha',C)$-triangulation $(P, Q)$ of $\Delta(K) \times \Delta(L)$, for all $p \in F_0=\left\{p \in \Delta(K) \ | \exists k \in K, \  p(k)=0 \right\}$, $q \in \Delta(L)$ and $\omega \in \Omega$, $v^f_\lambda(p,q,\omega) \geq v^{\eta}_\lambda(p,q,\omega)-\varepsilon$. Define
\begin{equation} \label{choice_alpha}
\alpha=\min\left\{\alpha',\frac{\varepsilon^{11}}{12}\left(C+1\right)^{-1} |K|^{-2} \right\}.
\end{equation}

Consider now an $(\alpha,C)$-triangulation $(P,Q)$ of $\Delta(K) \times \Delta(L)$, and $(p,q,\omega) \in \Omega^{f}$. Let us prove that $v^f_\lambda(p,q,\omega) \geq v^{\eta}_\lambda(p,q,\omega)-\Theta(\varepsilon)$, where $\Theta(\varepsilon)$ is a quantity that vanishes as $\varepsilon$ tends to 0. The proof proceeds in three main steps. First, a coupling between histories of $\Gamma^{\eta}$ and $\Gamma^f$ is defined. Second, it is proven that under this coupling, belief dynamics in $\Gamma^{\eta}$ and $\Gamma^f$ remain close until belief on Player 1's type gets very near the frontier of $\Delta(K)$. Third, a dynamic programming argument and the induction hypothesis are used to conclude.   \\
\textbf{Step 1}
Let $\sigma^{\eta} \in \Sigma^\eta$ be given by Proposition \ref{prop:bounded_var_eta}, and $\tau^f$ be a (behavior) optimal strategy of Player 2 in $\Gamma_\lambda^f$. 
We build recursively a process 
\\
$E_m=(X_{m-1},X'_{m-1},Y_{m-1},Y'_{m-1},I_{m-1},J_{m-1},P_{m},P'_{m},Q_{m},\Omega_{m})_{m \geq 1}$ on $X^2 \times Y^{\eta} \times Y \times I \times J \times \Delta(K) \times P \times Q \times \Omega$ 
with law $\m{P}$, in the following way:
\begin{itemize}
\item
For $m=1$, $X_0$, $X'_0$, $Y_0$, $Y'_0$, $I_0$, $J_0$ are arbitrary, $P_1=P'_1=p$, $Q_1=q$, $\Omega_1=\omega$. 
\item
For $ m \geq 2$: 
\begin{itemize} 
\item
$X_m \in X$ is the mixed action prescribed by strategy $\sigma^{\eta}$ in $\Gamma^{\eta}$, given history $H^{\eta}_m:=(\Omega_1,P_1,Q_1,X_1,Y_1,\dots,\Omega_{m-1},P_{m-1},Q_{m-1},X_{m-1},Y_{m-1},P_m,Q_m,\Omega_m)$:
$X_m:=\sigma^{\eta}_m(H^{\eta}_m)$.
\item
The random variable $Y'_m$ is the (realized) mixed action generated by strategy $\sigma^{f}$ in $\Gamma^{f}$, given history $H^f_m:=(\Omega_1,P'_1,Q_1,X'_1,Y'_1,\dots,\Omega_{m-1},P'_{m-1},Q_{m-1},X'_{m-1},Y'_{m-1},\Omega_m,P'_m,Q_m)$: the law of $Y'_m$ conditional to $E_1,\dots,E_m$ is $\tau^f_m(H^f_m)$.
\item
$I_m$ and $J_m$ are random variables representing Players' realized actions:
\begin{equation*}
\forall i \in I \quad \m{P}(I_m=i,J_m=j|E_1,\dots,E_m):=\overline{X_m}^{P_m}(i)\overline{Y_m}^{Q_m}(j).
\end{equation*}
\item
$\Omega_{m+1}$ is the state at stage $m+1$, and is drawn from $\rho(\Omega_m,I_m,J_m)$:
\begin{equation*}
\forall \omega \in \Omega \quad \m{P}(\Omega_{m+1}=\omega|E_1,\dots,E_m,I_m,J_m):=\rho(\omega|\Omega_m,I_m,J_m).
\end{equation*} 
\item
$P_{m+1}$ is the belief over Player 1's type, given that Player 1 played $X_m$ and the realized action is $I_m$:
\begin{equation*}
P_{m+1}:=P_m^{X_m}(.| I_m). 
\end{equation*}
$X'_m \in X$ is the translation of $X_m$, relative to $P_m$ and $P'_m$:
\begin{equation*}
X'_{m}:=T(X_m,P_m,P'_m). 
\end{equation*}
\item
$P'_{m+1}$ (resp., $Q_{m+1}$) is drawn from the splitting of the posterior belief ${P'_m}^{X'_m}(.|I_m)$ (resp., $Q_m^{Y'_m}(.|J_m)$): 
\begin{equation*}
\m{P}(P'_{m+1}=p,Q_{m+1}=q|E_1,\dots,E_m,I_m,J_m)=S[p|{P'_m}^{X'_m}(.|I_m)]S[q|Q_m^{Y'_m}(.|J_m)].
\end{equation*}       
\item
$Y_m:=(Y'_m,F)$. 
\end{itemize}
\end{itemize}

This process defines implicitly a strategy of Player 1 in $\Gamma^f$ and a strategy of Player 2 in $\Gamma^{\eta}$. Indeed, define 
$\sigma^f(H^f_m)$ as being the law of $X'_m$ conditional to $H^f_m$, and $\tau^{\eta}(H^{\eta}_m)$ by the law of $Y_m$ conditional to $H^{\eta}_m$. This defines $\sigma^f$ (resp., $\tau^{\eta}$) on all finite histories that are reached with positive probability by the 
process $(H^{f}_m)$ (resp., $(H^\eta_m)$), and $\sigma^f$ (resp., $\tau^{\eta}$) is defined arbitrarily otherwise. 
By definition, the law of $(H^{\eta}_m)_{m \geq 1}$ is $\m{P}^{\eta}_{p,q,\omega,\sigma^\eta,\tau^\eta}$, and the law of $(H^f_m)_{m \geq 1}$ is $\m{P}^{f}_{p,q,\omega,\sigma^f,\tau^f}$.

The figure below illustrates the coupling dynamics in a game with three types $k_1$, $k_2$ and $k_3$, and three actions, at some stage $m \geq 1$. 
Given stage beliefs $P_m$ and $P'_m$, and a realized action $I_m$, posterior beliefs $P_{m+1}$ and $P'_{m+1}$ are represented. 
\begin{center}
\includegraphics[scale=1.1]{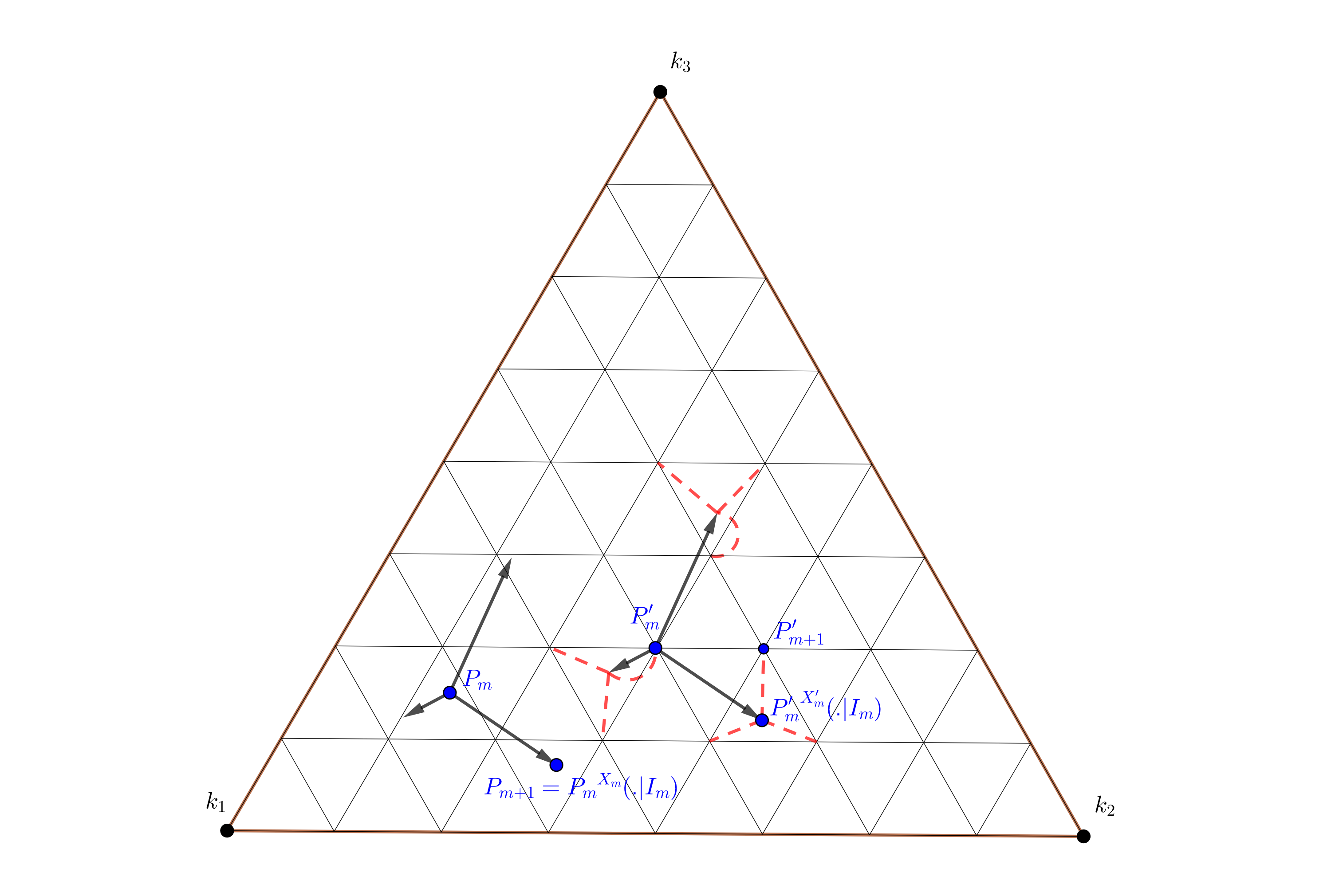}
\end{center}
 \textbf{Step 2: Properties of belief dynamics $(P_m)$ and $(P'_m)$}

Denote by $T_0$ the stopping time 
\\
$$T_0:=\max \left\{m \geq 1 | \forall k \in K, \ \forall i \in I,  \ P'_m(k)+P_m^{X_m}(k|i)+P_m(k) \geq 0, \ P_m(k) \geq \varepsilon \ \text{and}  \ P'_m(k)>0 \right\}+1.$$
We will prove the three following properties:
\begin{enumerate}
\item \label{diff_traj_1}
For all $m \geq 1$, 
\begin{equation*}
\m{E}(\left\|P'_{m \wedge T_0}-P_{m \wedge T_0}\right\|_1) \leq \varepsilon
\end{equation*}
\item
\label{diff_traj_2}
\begin{equation*}
\m{E}(1_{T_0<+\infty}\left\|P'_{T_0}-P_{T_0}\right\|_1) \leq \varepsilon
\end{equation*}
\item
\label{traj_border}
\begin{equation*}
\m{P}(T_0<+\infty, P'_{T_0} \notin F_{2\varepsilon}) \leq \varepsilon
\end{equation*}
\end{enumerate}
Let us first prove Property \ref{diff_traj_1}. 
Notice that on $\left\{m < T_0 \right\}$, by Proposition \ref{prop_translation}, we have
\begin{equation*}
{P'_m}^{X'_m}(.|I_m)-P_{m+1}=P'_m-P_m.
\end{equation*}
It follows that for any $t \geq 1$,
\begin{eqnarray*}
P'_{t \wedge T_0}-P_{t \wedge T_0} &=& \sum_{m=1}^{t \wedge T_0-1} [(P'_{m+1}-P_{m+1})-(P'_{m}-P_{m})]
\\
&=& \sum_{m=1}^{t \wedge T_0-1} [P'_{m+1}-{P'_{m}}^{X'_{m}}(.|I_{m})].
\end{eqnarray*}
Set $Z_m:=1_{m < T_0} \left[ P'_{m+1}-{P'_{m}}^{X'_{m}}(.|I_{m})\right]$, so that $P'_{t \wedge T_0}-P_{t \wedge T_0}=\sum_{m=1}^{t-1} Z_{m}$. 
Denote by $\mathcal{F}_m$ the $\sigma$-field generated by $(H^{\eta}_m,H^f_m,(I_{m'})_{m' \leq m})$. Note that $T_0$ is a stopping time with respect to $(\mathcal{F}_m)$. Moreover, 
because the law of $P'_{m+1}$ conditional to $\mathcal{F}_m$ is $S[{P'_m}^{X'_m}(.|I_m)]$, we have
\begin{equation*}
\m{E}(Z_{m}| \mathcal{F}_m)=1_{m<T_0}  \m{E} \left( P'_{m+1}-{P'_{m}}^{X'_{m}}(.|I_{m})|  \mathcal{F}_m \right)=0.
\end{equation*}
Moreover, for all $m<m'$, 
\begin{eqnarray*}
\m{E}(Z_m \cdot Z_{m'})&=& \m{E}(Z_{m} \cdot \m{E}( Z_{m'}|\mathcal{F}_{m'}))=0,
\end{eqnarray*}
 hence 
\begin{eqnarray}
\m{E}\left(\left\|P'_{t \wedge T_0}-P_{t \wedge T_0}\right\|_2^2 \right)=\m{E}\left(\sum_{m=1}^{t-1} \left\|Z_{m} \right\|_2^2 \right). \label{eq_pyth}
\end{eqnarray}
Let $m \in \left\{1,\dots,t-1\right\}$, and $s$ be the stepsize of the $(\alpha,C)$-triangulation $(P,Q)$. We have almost surely
\begin{eqnarray*}
\m{E}(\left\|Z_m\right\|_2^2| \mathcal{F}_m)&=& 1_{m < T_0} \m{E}\left(\left\|P'_{m+1}-{P'_m}^{X'_m}(.|I_m)\right\|_2^2 | \mathcal{F}_m\right)
\\
&=& 1_{m < T_0} \m{E}\left(1_{P'_{m+1}=P'_m} \left\|P'_{m}-{P'_m}^{X'_m}(.|I_m) \right\|_2^2| \mathcal{F}_m\right)
\\
&+&1_{m < T_0} \m{E}\left(1_{P'_{m+1} \neq P'_m}\left\|P'_{m+1}-{P'_m}^{X'_m}(.|I_m) \right\|_2^2 | \mathcal{F}_m \right) 
\\
&\leq& 1_{m < T_0} \left[s \m{E}\left( 1_{P'_{m+1}=P'_{m}} \left\|P'_{m}-{P'_m}^{X'_m}(.|I_m)\right\|_2 | \mathcal{F}_m\right)+s^2 \m{P}(P'_{m+1} \neq P'_m | \mathcal{F}_m)\right]
\\
&\leq& 1_{m < T_0} \left[s \m{E}\left( \left\|P'_{m}-{P'_m}^{X'_m}(.|I_m)\right\|_2 | \mathcal{F}_m \right)+s^2 \m{P}(P'_{m+1} \neq P'_m | \mathcal{F}_m)\right].
\end{eqnarray*} 
Let $A:=S[P'_m|{P'_m}^{X'_m}(.|I_m)]$. Because $P$ is an $(\alpha,C)$-triangulation, we have
\begin{equation*}
1-A \leq C s^{-1} \cdot \left\|P'_{m}-{P'_m}^{X'_m}(.|I_m) \right\|_2
\end{equation*}
We deduce that
\begin{eqnarray*}
\m{P}(P'_{m+1} \neq P'_m | \mathcal{F}_m)=1-A \leq C s^{-1} \cdot \m{E}\left(\left\|P'_{m}-{P'_m}^{X'_m}(.|I_m) \right\|_2 | \mathcal{F}_m\right) 
\end{eqnarray*}
and thus
\begin{eqnarray*}
\m{E}(\left\|Z_m\right\|_2^2 | \mathcal{F}_m) &\leq& 1_{m < T_0}\left(C+1\right) s \cdot \m{E}\left(\left\|P'_{m}-{P'_m}^{X'_m}(.|I_m) \right\|_2| \mathcal{F}_m\right),
\\
&=&1_{m < T_0}\left(C+1\right)s \cdot \m{E}(\left\|P_{m+1}-P_m\right\|_2 | \mathcal{F}_m),
\end{eqnarray*}
where we used Proposition \ref{prop_translation} in the above equality. We deduce that
\begin{eqnarray} \label{ineq_Z}
\m{E}(\left\|Z_m\right\|_2^2) \leq \left(C+1 \right) s \cdot \m{E}(1_{m < T_0} \left\|P_{m+1}-P_m\right\|_2).
\end{eqnarray}
Define $T:=\max\left\{m \geq 1, P_m \in \Delta(K) \setminus F_{\varepsilon} \right\}$. We have
\begin{align*} 
\m{E}(\left\|P'_{t \wedge T_0}-P_{t \wedge T_0}\right\|_2^2) &\leq \left(C+1\right)\alpha \m{E}\left(\sum_{m=1}^{t-1} 1_{m < T_0} \left\|P_{m+1}-P_m\right\|_2 \right) & ;
\text{Eq.} \ (\ref{eq_pyth}) \ \text{and} \ (\ref{ineq_Z}) ; \ s \leq \alpha
\\
&\leq  \left(C+1\right)\alpha \m{E}\left(\sum_{m=1}^{T}\left\|P_{m+1}-P_m\right\|_2 \right) & ; T_0 \leq T+1
\\
&\leq  \left(C+1\right)\alpha 3 |K| \varepsilon^{-5} & ; \text{Prop. \ref{prop:bounded_var_eta}}.
\\
&\leq \frac{\varepsilon^6}{4|K|} & ; \text{Eq.} \ (\ref{choice_alpha}) \numberthis \label{bound_variance}
\end{align*}
Hence, 
\begin{align*}
\m{E}(\left\|P'_{t \wedge T_0}-P_{t \wedge T_0}\right\|_1) &\leq \sqrt{|K|} \m{E}(\left\|P'_{t \wedge T_0}-P_{t \wedge T_0}\right\|_2) 
\\
&\leq \sqrt{|K|} \left[\m{E}(\left\|P'_{t \wedge T_0}-P_{t \wedge T_0}\right\|^2_2)\right]^{\frac{1}{2}}
\\
&\leq \varepsilon^3/2 \numberthis \label{error_1},
\end{align*}
and since $\varepsilon^3/2 \leq \varepsilon$, Property \ref{diff_traj_1} is proved. 
It follows by the dominated convergence theorem that 
\begin{eqnarray*}
\m{E}(1_{T_0<+\infty}\left\|P'_{T_0}-P_{T_0}\right\|_1) \leq \varepsilon^3/2,
\end{eqnarray*}
which proves Property \ref{diff_traj_2}. In addition,
using Markov inequality, we deduce that
\begin{align*}
\m{P}(\left\{T_0<+\infty\right\}, \left\|P'_{T_0}-P_{T_0}\right\|_1 \geq \varepsilon^2) 
&\leq \varepsilon^{-2} \m{E}(1_{T_0<+\infty}\left\|P'_{T_0}-P_{T_0}\right\|_1)
\\
&\leq \varepsilon/2 \numberthis \label{ineq:frontier}
\end{align*}
Moreover, by definition of $T_0$, on the event $\left\{T_0<+\infty \right\}$, there exists random elements $i \in I$ and $k \in  K$ such that either
\\
$P'_{T_0}(k)+P_{T_0}^{X_{T_0}}(k|i)-P_{T_0}(k) <0$, or $P_{T_0}(k)<\varepsilon$, or $P'_{T_0}(k)=0$. 
Because $\sigma^{\eta}$ is $\varepsilon$-ambiguous, we have on the event $\left\{T_0<+\infty \right\}$
\begin{equation*}
P_{T_0}^{X_{T_0}}(k|i) \geq \varepsilon P_{T_0}(k).
\end{equation*}
Thus, on the event $\left\{T_0<+\infty \right\}$, when $P_{T_0}(k) \geq \varepsilon$, we have either
 $P'_{T_0}(k)-P_{T_0}(k)<-{P_{T_0}}^{X_{T_0}}(k|i) \leq  -\varepsilon^2$, or $P'_{T_0}(k)=0 \leq P_{T_0}(k)-\varepsilon$. 
 Consequently, on $\left\{T_0<+\infty \right\}$, either $\left\|P'_{T_0}-P_{T_0}\right\|_1>\varepsilon^2$ or $P_{T_0} \in F_{\varepsilon}$. Hence $\left\{T_0<+\infty \right\} \subset \left\{\left\|P'_{T_0}-P_{T_0}\right\|_1>\varepsilon^2\right\} \cup \left\{P_{T_0} \in F_{\varepsilon}\right\}$. We deduce that
\begin{align*}
\m{P}(T_0<+\infty, P_{T_0} \notin F_{\varepsilon}) &\leq \m{P}(T_0<+\infty, \left\|P'_{T_0}-P_{T_0}\right\|_1> \varepsilon^2) 
\\
&\leq \varepsilon/2 & ; \text{Eq. \eqref{ineq:frontier}} 
\end{align*}
It follows that
\begin{equation*}
\m{P}(T_0<+\infty, P'_{T_0} \notin F_{2\varepsilon}) \leq \m{P}(T_0<+\infty, P_{T_0} \notin F_{\varepsilon} \ \text{or} \left\|P'_{T_0}-P_{T_0}\right\|_1> \varepsilon^2) \leq \varepsilon/2+\varepsilon/2=\varepsilon,
\end{equation*}
and Property \ref{traj_border} is proved. 

\textbf{Step 3: Conclusion}
\\
By optimality of $\tau^f$ in $\Gamma_\lambda^f$, we have
\begin{eqnarray*}
v^f_{\lambda}(p,q,\omega) &\geq& 
\m{E}\left(\sum_{m=1}^{T_0-1} \lambda(1-\lambda)^{m-1} g^f(P'_m,Q_m,\Omega_m,X'_m,Y'_m)\right)+\m{E}(1_{T_0<+\infty}(1-\lambda)^{T_0-1} v^f_{\lambda}(P'_{T_0},Q_{T_0},\Omega_{T_0}))
\end{eqnarray*}
Let us bound from below the left-hand side term. We have
\begin{align*}
&\m{E}(1_{m <T_0}|g^f(P'_m,Q_m,\Omega_m,X'_m,Y'_m)-g^{\eta}(P_m,Q_m,\Omega_m,X_m,Y_m)|) 
\\
&\leq
\left\|g\right\|_{\infty} \m{E}(1_{m <T_0}\left\|P'_m-P_m\right\|_1) & ; \text{Proposition \ref{prop_translation} \ref{comp_trans_action}}
\\
&\leq  \varepsilon \left\|g\right\|_{\infty} & ; \text{Property \ref{diff_traj_1}}
\end{align*}
We deduce that
\begin{eqnarray}
&&\m{E}\left(\sum_{m=1}^{T_0-1} \lambda(1-\lambda)^{m-1}g^f(P'_m,Q_m,\Omega_m,X'_m,Y'_m)\right) 
\nonumber \\
&\geq& \m{E}\left(\sum_{m=1}^{T_0-1} \lambda(1-\lambda)^{m-1}g^{\eta}(P_m,Q_m,\Omega_m,X_m,Y_m)\right)
-  \varepsilon\left\|g\right\|_{\infty}. \label{eq_diff_payoff}
\end{eqnarray}
As for the right-hand side term, we have  
\begin{align*}
&\m{E}\left(1_{T_0<+\infty}(1-\lambda)^{T_0-1}1_{P'_{T_0} \in F_{2\varepsilon}} \left[v^f_{\lambda}(P'_{T_0},Q_{T_0},\Omega_{T_0})-v^{\eta}_{\lambda}(P'_{T_0},Q_{T_0},\Omega_{T_0})\right]\right)
\\
&\geq -\left\|v^{\eta}_\lambda-v^f_{\lambda}\right\|_{F_0} -9 \varepsilon \left\|g\right\|_{\infty} & ; \text{Proposition \ref{lipf}}, \ \alpha \leq \varepsilon
\\
&\geq -\varepsilon -9 \varepsilon \left\|g\right\|_{\infty} & ; \text{Induction hypothesis}
\end{align*}
Moreover, we have 
\begin{align*}
&\m{E}\left(1_{T_0<+\infty}(1-\lambda)^{T_0-1} 1_{P'_{T_0} \notin F_{2\varepsilon}} \left[v^f_{\lambda}(P'_{T_0},Q_{T_0},\Omega_{T_0})-v^{\eta}_{\lambda}(P'_{T_0},Q_{T_0},\Omega_{T_0})\right]\right)
\\
&\geq
-\m{P}(T_0<+\infty, P'_{T_0} \notin F_{2\varepsilon}) 2 \left\|g\right\|_{\infty} & ; \left\|v^f_\lambda\right\|_{\infty}, \ \left\|v^{\eta}_\lambda\right\|_{\infty} \leq \left\|g\right\|_{\infty} 
\\
&\geq -2\varepsilon \left\|g\right\|_{\infty} & ; \text{Property } \ref{traj_border}
\end{align*}
We deduce that 
\begin{eqnarray}
\m{E}\left(1_{T_0<+\infty}(1-\lambda)^{T_0-1}\left[v^f_{\lambda}(P'_{T_0},Q_{T_0},\Omega_{T_0})-v^{\eta}_{\lambda}(P'_{T_0},Q_{T_0},\Omega_{T_0})\right]\right)
\geq -\varepsilon -11 \varepsilon \left\|g\right\|_{\infty}, \label{eq_diff_value_1}
\end{eqnarray}
Moreover, applying the Lipschitz property of $v^{\eta}_\lambda$ and then Property \ref{diff_traj_2} yields
\begin{align}
\m{E}\left(1_{T_0<+\infty}(1-\lambda)^{T_0-1}\left[v^\eta_{\lambda}(P'_{T_0},Q_{T_0},\Omega_{T_0})-v^{\eta}_{\lambda}(P_{T_0},Q_{T_0},\Omega_{T_0})\right]\right) &\geq -\m{E}(1_{T_0<+\infty} \left\|P'_{T_0}-P_{T_0}\right\|_1) \left\|g\right\|_{\infty} \nonumber
\\
&\geq -\varepsilon \left\|g\right\|_{\infty}. \label{eq_diff_value_2}
\end{align}

We deduce that
\begin{align*}
v^f_{\lambda}(p,q,\omega) &\geq
\m{E}\left(\sum_{m=1}^{T_0-1} \lambda(1-\lambda)^{m-1} g^\eta (P_m,Q_m,\Omega_m,X_m,Y_m)\right)&+
\m{E}(1_{T_0<+\infty}(1-\lambda)^{T_0-1} v^{\eta}_{\lambda}(P_{T_0},Q_{T_0},\Omega_{T_0})) 
\\
&
- \varepsilon-13\varepsilon \left\|g\right\|_{\infty} & ; Eq. \ \eqref{eq_diff_payoff},  \eqref{eq_diff_value_1}, \eqref{eq_diff_value_2}
\\
&\geq
v^{\eta}_{\lambda}(p,q,\omega)-\varepsilon -25 \varepsilon \left\|g\right\|_{\infty} & ; \text{Shapley eq. for $v_\lambda^{\eta}$ and $12\left\|g\right\|_\infty$-optimality of $\sigma^\eta$}
\end{align*}
which concludes the proof. 
\subsection{Missing proofs} \label{subsec:missing}
\subsubsection{Proof of Proposition \ref{equal_e_eta}}
\begin{proof}
We are going to prove that $v^e_\lambda$ satisfies the same Shapley equation as $v^\eta_\lambda$. 
Recall that the function $v^e_\lambda:\Omega^e \rightarrow \m{R}$ is the unique solution of the Shapley equation:
\begin{equation} \label{shapley_e}
\forall \, \omega^e \in \Omega^e, \ v^e_\lambda(\omega^e)=\displaystyle \val_{(x,y) \in X \times Y} \left\{\lambda g^e(\omega^e,x,y)+(1-\lambda) \m{E}^e_{\omega^e,x,y}(v^e_\lambda) \right\}.
\end{equation}
 Similarly, $v^{\eta}_\lambda$ is the unique solution of the Shapley equation
 \begin{equation} \label{shapley_eta}
\forall \, \omega^{e} \in \Omega^{e}, \quad v^{\eta}_\lambda(\omega^e)=\val_{(\mu,\nu) \in \Delta(X) \times \Delta(Y^{\eta})} \left\{\lambda g^\eta(\omega^{e},\mu,\nu)+(1-\lambda) \m{E}^{\eta}_{\omega^e,\mu,\nu}(v^{\eta}_\lambda) \right\},
\end{equation}
where for $f : \Omega^e \rightarrow \m{R}$ and $(\omega^{e},x,y) \in \Omega^e \times X \times Y^{\eta}$,
\begin{eqnarray*}
\m{E}^{\eta}_{\omega^{e},x,y}(f)&:=&\sum_{{\omega^e}' \in \Omega^{e}} \rho^{\eta}({\omega^{e}}',x,y|\omega^{e},x,y) f({\omega^e}'),
\end{eqnarray*}
and $(x,y) \rightarrow \m{E}^{\eta}_{\omega^e,x,y}(f)$ and $(x,y) \rightarrow g^\eta(\omega^{e},x,y)$ are extended linearly to $\Delta(X) \times \Delta(Y^{\eta})$. Considering distributions over $X$ and $Y^{\eta}$ is necessary because we do not know \textit{a priori} whether $\Gamma_\lambda^{\eta}$ has a value in pure strategies or not. 

Consider $(\omega^{e},x,y) \in \Omega^e \times X \times Y^{\eta}$. We have
\begin{equation*}
\m{E}^{\eta}_{\omega^{e},x,y}(v^{e}_\lambda)=1_{y_2=E}\m{E}^{e}_{\omega^{e},x,y_1}(v^e_\lambda) +1_{y_2=F} \sum_{(p',q',q'',\omega') \in \Delta(K) \times Q \times \Delta(L) \times \Omega} S[q'|q''] \rho^e(p',q'',\omega'|\omega^e,x,y) v^e_{\lambda}(p',q',\omega')
\end{equation*}
Let $(p',q',\omega') \in \Omega^e$. 
Because $v^e_\lambda$ is convex in $q$, we have 
\begin{equation*}
v^e_{\lambda}(p',q'',\omega') \leq \sum_{q' \in Q} S[q'|q''] v^e_\lambda(p',q',\omega'),
\end{equation*}
which implies that 
\begin{equation*}
\m{E}^e_{\omega^e,x,y_1}(v^e_\lambda) \leq \m{E}^{\eta}_{\omega^e,x,y}(v^e_\lambda).
\end{equation*}
We deduce that
\begin{eqnarray*}
\val_{(x,y) \in X \times Y} \left\{\lambda g^e(\omega^e,x,y)+(1-\lambda) \m{E}^{e}_{\omega^e,x,y}(v^e_\lambda) \right\}&=&\val_{(x,y) \in X \times Y^{\eta}} \left\{\lambda g^\eta(\omega^e,x,y)+(1-\lambda) \m{E}^{\eta}_{\omega^e,x,y}(v^e_\lambda) \right\}
\\
&=&\val_{\Delta(X) \times \Delta(Y^{\eta})} \left\{\lambda g^\eta(\omega^e,\mu,\nu)+(1-\lambda) \m{E}^\eta_{\omega^e,\mu,\nu}(v^e_\lambda) \right\},
\end{eqnarray*}
where the last line stems from the fact that when a game has a value in pure strategies, the value in mixed strategies is identical. Combining with (\ref{shapley_e}), we get that $v^e_\lambda$ satisfies the functional equation $(\ref{shapley_eta})$, hence by uniqueness $v_\lambda^e=v_\lambda^{\eta}$. Moreover, the Shapley equation in $\Gamma_\lambda^{\eta}$ can be written in pure strategies, hence $\Gamma_\lambda^{\eta}$ admits pure optimal stationary strategies. 
\end{proof}
\subsubsection{Adapting Proposition \ref{opt_concise} to $\Gamma^\eta$}

As explained in Subsection \ref{subsec:modif}, the only ingredient missing for the proof of Proposition \ref{prop:bounded_var_eta} is the validity of Proposition \ref{opt_concise} in the game $\Gamma^{\eta}$, that is: 
\begin{proposition} 
Assume that $\Gamma$ is an absorbing game with incomplete information on both sides. Let $\varepsilon>0$ and $\lambda \in (0,1]$. Let $\sigma$ be a (pure) optimal stationary strategy in $\Gamma^\eta_\lambda$, and $\sigma'$ a (pure) stationary strategy such that for all $\omega^e=(p,q,\omega) \in \Omega^e$, $\sigma'(\omega^e)$ is an $\varepsilon$-convexification of $\sigma(\omega^e)$ at $p$. Then $\sigma'$ is $2\varepsilon \left\|g\right\|_{\infty}$-optimal in $\Gamma^\eta_\lambda$. 
\end{proposition}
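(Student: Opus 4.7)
The plan is to imitate the proof of Proposition \ref{opt_concise} essentially verbatim, after observing the following structural fact: the only effect of letting Player 2 choose the flag $y_2 \in \{F,E\}$ is an extra randomization of the posterior belief on Player 2's type, which is \emph{decoupled} from the $p$-dynamics. More precisely, for any $(p,q,\omega) \in \Omega^e$, $x \in X$, $y=(y_1,y_2) \in Y^\eta$ and bounded $f: \Omega^e \rightarrow \m{R}$,
\begin{equation*}
\m{E}^\eta_{p,q,\omega,x,y}(f)=\sum_{i,j,\omega',q'} \rho(\omega'|\omega,i,j)\bar{x}^p(i)\bar{y_1}^q(j)\, \pi_{y_2}(q'|q^{y_1}(.|j))\, f(p^x(.|i),q',\omega'),
\end{equation*}
where $\pi_E(q'|q''):=\mathbf{1}_{q'=q''}$ and $\pi_F(q'|q''):=S[q'|q'']$. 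In both cases the $p$-marginal of the transition is identical to the one in $\Gamma^e$, and the payoff satisfies $g^\eta=g^e$.

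The first step will be to re-prove Lemma \ref{lemma:future_payoff} in $\Gamma^\eta$: under the same hypotheses on $u,v$ (equality on $\Omega\setminus\{\omega^0\}$ and concavity plus $C$-Lipschitzness of $v(\cdot,q',\omega^0)$ in $p$ for every $q'$),
\begin{equation*}
\m{E}^\eta_{p,q,\omega^0,x',y}(u)-\m{E}^\eta_{p,q,\omega^0,x,y}(v) \geq \min_{(p',q')} \{u(p',q',\omega^0)-v(p',q',\omega^0)\} R - 2(1-R)\varepsilon C,
\end{equation*}
where $R=\sum_j \bar{y_1}^q(j)\sum_i \rho(\omega^0|\omega^0,i,j)\bar{x}^p(i)$. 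The proof goes through line by line: the expectation splits as $\sum_j \bar{y_1}^q(j)[A(j)+B(j)]$ with $A(j)$ the absorption-not-happening branch (where the bound $r(j)D$ comes from $u-v\geq D$ at $\omega^0$) and $B(j)$ the absorption branch (where $u=v$ off $\omega^0$ and Property \ref{close_belief} of the $\varepsilon$-convexification handles the transfer from $x'$ to $x$). The one novelty is the additional summation against $\pi_{y_2}(q'|q^{y_1}(.|j))$, but for each fixed $q'$ the concavity inequality
\begin{equation*}
\sum_i \bar{x}^p(i)\, v(p^{x'}(.|i),q',\omega^0) \geq \sum_{i'} \bar{x}^p(i')\, v(p^x(.|i'),q',\omega^0)
\end{equation*}
still follows from Properties \ref{convex_combination} and \ref{opt_transport} of $\varepsilon$-convexification plus concavity of $v(\cdot,q',\omega^0)$ in $p$; averaging over $q'$ preserves the inequality.

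Granted this lemma, the second step is to reproduce the proof of Proposition \ref{opt_concise} word for word. Fix a stationary $\tau \in T^\eta$ and let $D:=\min_{(p,q)}\{\gamma^\eta_\lambda(p,q,\omega^0,\sigma',\tau)-v^\eta_\lambda(p,q,\omega^0)\}$, attained at some $(p,q)$; set $x:=\sigma(p,q,\omega^0)$, $x':=\sigma'(p,q,\omega^0)$, $y:=\tau(p,q,\omega^0)$. Combining the recursive equation for $\gamma^\eta_\lambda(\cdot,\sigma',\tau)$, the Shapley equation for $v^\eta_\lambda$ in pure strategies (valid by Proposition \ref{equal_e_eta}), Property \ref{close_belief} applied to stage payoffs, and the adapted lemma (with $u=\gamma^\eta_\lambda(\cdot,\sigma',\tau)$, $v=v^\eta_\lambda$, $C=\|g\|_\infty$) yields
\begin{equation*}
D \geq -\lambda\varepsilon\|g\|_\infty + (1-\lambda)(D-(1-R)(D+2\varepsilon\|g\|_\infty)),
\end{equation*}
from which the standard contradiction argument of Proposition \ref{opt_concise} forces $D \geq -2\varepsilon\|g\|_\infty$.

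The only point that requires care — and the one I expect to be the main obstacle to check — is that the extra $q$-splitting hidden inside $\m{E}^\eta$ does not interfere with the concavity-of-$v$-in-$p$ inequality. It does not, because the splitting acts purely on the $q$-coordinate, while every convexification inequality is a statement about the $p$-coordinate only; the two coordinates can therefore be handled independently inside the expectation. Once this decoupling is made explicit, the proof of Proposition \ref{opt_concise} transcribes to $\Gamma^\eta$ without any genuinely new estimate.
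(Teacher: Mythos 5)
Your proposal is correct and follows essentially the same route as the paper: the key observation in both is that the extra $q$-splitting introduced by the flag $F$ is independent of Player 1's realized action, so it cannot interfere with the concavity-in-$p$ argument underlying Lemma \ref{lemma:future_payoff}, after which the proof of Proposition \ref{opt_concise} transcribes verbatim. The only (cosmetic) difference is packaging: the paper keeps Lemma \ref{lemma:future_payoff} untouched and absorbs the splitting into averaged functions $u(p,q,\omega)=\sum_{q'\in Q}S[q'|q]\gamma^{\eta}_\lambda(p,q',\omega,\sigma',\tau)$ and $v(p,q,\omega)=\sum_{q'\in Q}S[q'|q]v^{\eta}_\lambda(p,q',\omega)$, so that $\m{E}^{\eta}_{\cdot,x,(y_1,F)}$ becomes $\m{E}^{e}_{\cdot,x,y_1}$ of these averages, whereas you re-prove the lemma with the splitting kernel $\pi_{y_2}$ inside the expectation.
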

\begin{proof}
This is essentially the same proof than in Proposition \ref{opt_concise}. Let $\tau: \Omega^e \rightarrow Y^{\eta}$ be a (pure) stationary strategy for Player 2. 
Let $p,q$ that minimize $(p,q) \rightarrow \gamma^{\eta}_\lambda(p,q,\omega^0,\sigma',\tau)-v^\eta_\lambda(p,q,\omega^0)$, and  call this value $D$. In order to prove our result, it is enough to prove that 
$D \geq -2\varepsilon \left\|g\right\|_{\infty}$. Let $(p,q,\omega) \in \Omega^e$, and let $x=\sigma(p,q,\omega)$, $x'=\sigma'(p,q,\omega)$ and $y=\tau(p,q,\omega)$. 
We have 
\begin{equation} \label{dyn_succinct_eta}
\gamma^{\eta}_{\lambda}(p,q,\omega^0,\sigma',\tau)=\lambda g^{\eta}(p,q,\omega^0,x',y)+(1-\lambda) \m{E}^{\eta}_{p,q,\omega^0,x',y}(\gamma^{\eta}_{\lambda}(\, . \, ,\sigma',\tau))
\end{equation}
and 
\begin{equation} \label{dyn_opt_eta}
v^\eta_{\lambda}(p,q,\omega^0) \leq \lambda g^{\eta}(p,q,\omega^0,x,y)+(1-\lambda) \m{E}^{\eta}_{p,q,\omega^0,x,y}(v^\eta_{\lambda})
\end{equation}
Inequality (\ref{payoff_succinct}) extends straigthforwardly to $g^\eta$, hence: 
\begin{equation} \label{payoff_succinct_eta}
g^\eta(p,q,\omega^0,x',y) \geq  g^\eta(p,q,\omega^0,x,y)- \varepsilon \left\|g\right\|_{\infty}. 
\end{equation}
Let us focus on the second term. First, assume that $y_2=F$. For all $(p,q,\omega) \in \Omega^e$, define 
\begin{equation*}
u(p,q,\omega):= \sum_{q' \in Q} S[q'|q]\gamma^{\eta}_\lambda(p,q',\omega,\sigma',\tau),
\end{equation*}
and 
\begin{equation*}
v(p,q,\omega):= \sum_{q' \in Q} S[q'|q] v^{\eta}_\lambda(p,q',\omega). 
\end{equation*}
We have 
\begin{equation*}
\m{E}^{\eta}_{p,q,\omega^0,x',y}(\gamma^\eta_{\lambda}(\, . \, ,\sigma',\tau))=
\m{E}^{e}_{p,q,\omega^0,x',y_1}(u),
\end{equation*}
and 
\begin{equation*}
\m{E}^{\eta}_{p,q,\omega^0,x,y}(v^\eta_{\lambda})=
\m{E}^{e}_{p,q,\omega^0,x,y_1}(v). 
\end{equation*}
 Applying Lemma \ref{lemma:future_payoff} with $u=\gamma^\eta_\lambda(.,\sigma',\tau)$, $v=v^\eta_\lambda$ and $C=\left\|g\right\|_\infty$, and using the same notations for $D$ and $R$, we get
\begin{equation*}
\m{E}^{\eta}_{p,q,\omega^0,x',y}(\gamma^\eta_{\lambda}(\, . \, ,\sigma',\tau))-\m{E}^\eta_{p,q,\omega^0,x,y}(v^{\eta}_{\lambda}) \geq DR -2(1-R) \varepsilon \left\|g\right\|_{\infty}.
\end{equation*}
Combining with (\ref{dyn_succinct_eta}), \eqref{dyn_opt_eta} and (\ref{payoff_succinct_eta}), we get
\begin{eqnarray*}
\gamma^{\eta}_{\lambda}(p,q,\omega^0,\sigma',\tau)-v^{\eta}_\lambda(p,q,\omega^0) &\geq& -\lambda  \varepsilon \left\|g\right\|_\infty+(1-\lambda)(DR -2(1-R) \varepsilon \left\|g\right\|_{\infty})
\\
&=& -\lambda  \varepsilon \left\|g\right\|_\infty+(1-\lambda)(D-(1-R)(D+2\varepsilon \left\|g\right\|_{\infty})),
\end{eqnarray*}
When $\tau_2=E$, this inequality holds too (it corresponds to the proof of inequality (\ref{ineq:xx'}) in Proposition \ref{opt_concise}). 
This implies by definition of $D$
\begin{equation*}
D \geq -\lambda  \varepsilon \left\|g\right\|_\infty+(1-\lambda)(D-(1-R)(D+2\varepsilon \left\|g\right\|_{\infty})),
\end{equation*}
and we conclude as in Proposition \ref{opt_concise} that $D \geq -2\varepsilon \left\|g\right\|_{\infty}$. 
\end{proof}
\subsubsection{Proof of Proposition \ref{lipf}}
\begin{proof}
By assumption, there exists $k \in K$ such that $p(k) \leq \varepsilon$. Hence, the belief $p$ can be decomposed as $p=p(k) \cdot \delta_k+(1-p(k)) \cdot \hat{p}$, with $\hat{p} \in F_0$ and $\left\|p-\hat{p}\right\|_{1}= 2p(k) \leq 2\varepsilon$.
Let $x \in X$ that satisfies
\begin{equation} \label{shapley_prime}
\forall y \in Y^\eta, \quad v_\lambda^{\eta}(\hat{p},q,\omega) \leq \lambda g^\eta(\hat{p},q,\omega,x,y)+(1-\lambda) \m{E}^{\eta}_{\hat{p},q,\omega,x,y} (v^{\eta}_\lambda),
\end{equation}
and for all $k' \in \supp(\hat{p})$, $x(i^*|k')=0$, where $i^*$ is given by Assumption \ref{asslip}. 
Existence of such an $x$ stems from the Shapley equation in $\Gamma^{\eta}_\lambda$. Define $x'$ such that for all $k' \neq k$, $x'(.|k')=x(.|k')$, and 
$x'(i^*|k)=1$. Let $y \in Y$ such that
\begin{equation} \label{ineq_f}
v^f_{\lambda}(p,q,\omega) \geq \lambda g^f(p,q,\omega,x',y)+(1-\lambda)\m{E}^f_{p,q,\omega,x',y}(v^{f}_\lambda) 
\end{equation}
We have
\begin{eqnarray}
g^f(p,q,\omega,x',y)&=&p(k) g^f(k,q,\omega,i^*,y)+(1-p(k)) g^f(\hat{p},q,\omega,x,y) \nonumber
\\
&\geq& -p(k) \left\|g\right\|_\infty+(1-p(k)) g^\eta(\hat{p},q,\omega,x,y). \label{ineq_g}
\end{eqnarray}
Moreover, we have
\begin{eqnarray} 
\m{E}^f_{p,q,\omega,x',y}(v^{f}_\lambda)=
\m{E}^\eta_{p,q,\omega,x',(y,F)}\left(u_\lambda \right),
\end{eqnarray}
where 
\begin{equation*}
\forall (\tilde{p},\tilde{q},\tilde{\omega}) \in \Omega^e, \quad u_\lambda(\tilde{p},\tilde{q},\tilde{\omega}):=\sum_{p' \in P}  S[p'|\tilde{p}] v^{f}_\lambda(p',\tilde{q},\tilde{\omega}).
\end{equation*}
Since
$$
 p^{x'}(.|i) := \left\{
    \begin{array}{ll}
      \hat{p}^x(.|i)  & \mbox{when} \ i \neq i^* \\
      \delta_k  & \mbox{otherwise},
    \end{array}
\right.
$$
we deduce that
\begin{eqnarray} \label{eq:future_payoff_f}
\m{E}^f_{p,q,\omega,x',y}(v^{f}_\lambda) &\geq& -p(k) \left\|g\right\|_\infty +(1-p(k)) 
\m{E}^\eta_{\hat{p},q,\omega,x,(y,F)}\left(u_\lambda \right).
\end{eqnarray}
For all $(\tilde{p},\tilde{q},\tilde{\omega}) \in F_0 \times \Delta(K) \times \Delta(L)$, we have 
\begin{eqnarray*}
u_\lambda(\tilde{p},\tilde{q},\tilde{\omega}) &\geq& \sum_{p' \in P}  S[p'|\tilde{p}] v^{\eta}_\lambda(p',\tilde{q},\tilde{\omega})-\left\|v^f_{\lambda}-v^{\eta}_{\lambda}\right\|_{F_0}
\\
&\geq& v^\eta_\lambda(\tilde{p},\tilde{q},\tilde{\omega})-\alpha \left\|g\right\|_\infty-\left\|v^f_{\lambda}-v^{\eta}_{\lambda}\right\|_{F_0},
\end{eqnarray*}
where we used the fact that $v^\eta_\lambda(.,\tilde{q},\tilde{\omega})$ is $\left\|g\right\|_\infty$-Lipschitz in the second inequality. Combining with \eqref{eq:future_payoff_f}, we get
\begin{equation}
\m{E}^f_{p,q,\omega,x',y}(v^{f}_\lambda) \geq -p(k) \left\|g\right\|_\infty +(1-p(k))\left[
\m{E}^\eta_{\hat{p},q,\omega,x,(y,F)}\left(v^{\eta}_\lambda \right)-\alpha \left\|g\right\|_\infty-\left\|v^f_{\lambda}-v^{\eta}_{\lambda}\right\|_{F_0}\right].  \label{ineq_E}
\end{equation}
Finally,
\begin{align*}
v^f_\lambda(p,q,\omega) & \geq  - \lambda p(k) \left\|g\right\|_{\infty}+\lambda (1-p(k))g^\eta(\hat{p},q,\omega,x,y)-(1-\lambda) p(k) \left\|g\right\|_{\infty} & ; \text{\eqref{ineq_f}, \eqref{ineq_g} and \eqref{ineq_E}}
\\
&+(1-\lambda)(1-p(k))  \left[\m{E}^\eta_{\hat{p},q,\omega,x,(y,F)}\left(v^{\eta}_\lambda \right) -\alpha \left\|g\right\|_\infty-\left\|v^f_{\lambda}-v^{\eta}_{\lambda}\right\|_{F_0}\right]
\\
&\geq  (1-p(k)) v^\eta_{\lambda}(\hat{p},q,\omega) -p(k) \left\|g\right\|_{\infty} -\alpha \left\|g\right\|_\infty-\left\|v^f_{\lambda}-v^{\eta}_{\lambda}\right\|_{F_0}& ; \eqref{shapley_prime} 
\\
&\geq v^\eta_{\lambda}(\hat{p},q,\omega)-(2\varepsilon+\alpha) \left\|g\right\|_{\infty}-\left\|v^f_{\lambda}-v^{\eta}_{\lambda}\right\|_{F_0} & ; p(k) \leq \varepsilon, \left\|v^\eta_\lambda\right\|_\infty \leq \left\|g\right\|_\infty
\\
&\geq v^\eta_{\lambda}(p,q,\omega)-(4\varepsilon+\alpha) \left\|g\right\|_{\infty}-\left\|v^f_{\lambda}-v^{\eta}_{\lambda}\right\|_{F_0} &; \left\|p-\hat{p}\right\|_1 \leq 2\varepsilon 
\end{align*}
and the result follows. 
\end{proof}

\section{Proof of Theorem \ref{main_thm_2}} \label{sec:uniform}
Let $\Gamma=(K,I,J,\Omega,\rho,g)$ be an absorbing game with incomplete information on one side (since $L$ is a singleton, we omit it in the description of the game). Similarly to Theorem \ref{main_thm}, Assumption \ref{asslip} is supposed to hold without loss of generality. First, note that in the game $\Gamma^f$ constructed in Section \ref{subsec:gamma_f}, Player 1 can guarantee uniformly the limit value. This is indeed a consequence of the existence of the \textit{uniform value} \cite{BGV13}. The idea is to copy strategies of Player 1 in $\Gamma^f$ into strategies in $\Gamma$. An obstacle is that histories in $\Gamma^f$ contain mixed actions of Player 2, while histories in $\Gamma$ contains \textit{pure} actions. Hence, $\Gamma^f$ needs to be modified accordingly. 

Formally, let $C>0$ be given by Proposition \ref{prop:ex_tr}. Let $\varepsilon>0$. Let $\alpha$ be given by Proposition \ref{comparison_e_f}, and $\alpha_0:=\min(\alpha,\varepsilon |K|^{-1/2})$. By Proposition \ref{prop:ex_tr},  there exists an $(\alpha_0,C)$-triangulation $P$ of $\Delta(K)$. By 
Proposition \ref{comparison_e_f}, for all $\lambda \in (0,1]$, for all $(p,\omega) \in \Omega^f=P \times \Omega$,
\begin{equation} \label{eq:approx}
v^f_\lambda(p,\omega)-\varepsilon \leq v^e_\lambda(p,\omega) \leq v^f_\lambda(p,\omega)+\varepsilon.
\end{equation}
The game $\Gamma^{\varphi}$ is described by a state space $P \times \Omega$, action set $X$ for Player 1, action set $J$ for Player 2, transition function $\rho^\varphi$ defined for all $(p,\omega,x,j) \in \Delta(K) \times \Omega \times \Delta(I)^K \times J$ by
\begin{equation*}
\rho^\varphi(p,\omega,x,j):=\left(\sum_{i \in I} \sum_{\omega' \in \Omega}  \rho(\omega'|\omega,i,j) 
\bar{x}^p(i) \sum_{p' \in P} S[p'|p^x(.|i)] \cdot \delta_{(p',\omega')}\right),
\end{equation*}
and payoff function 
\begin{equation*}
g^\varphi(p,\omega,x,j):=\sum_{(k,i) \in K \times I} p(k) x(i|k) g(k,\omega,i,j).
\end{equation*}
The difference between $\Gamma^{\varphi}$ and $\Gamma^f$ is that in $\Gamma^f$, Player 2's action set is $\Delta(J)$. Given a state $(p,\omega) \in P \times \Omega$ and a pair of strategies $(\sigma,\tau)$ in $\Gamma^{\varphi}$, we will denote by $\m{P}^{\varphi}_{p,\omega,\sigma,\tau}$ the probability measure induced by these strategies on the set of infinite histories $(P \times \Omega \times X \times J)^{\m{N}}$. 

It is well-known that in stochastic games with perfect observation of the state, changing the information structure on actions does not modify discounted values. Therefore, for all $\lambda \in (0,1]$, the discounted value of $\Gamma^\varphi$ is equal to $v^f_\lambda$. Hence, $\Gamma^\varphi$ and $\Gamma^f$ have the same limit value, called $w^*$. By \eqref{eq:approx}, the limit value $v^*$ of $\Gamma^e$, which is the same as the limit value of $\Gamma$, satisfies that for all $(p,\omega) \in P \times \Omega$, $w^*(p,\omega) \geq v^*(p,\omega)-\varepsilon$. 
\\
Let $(p,\omega) \in P \times \Omega$. By existence of the uniform value \cite{BGV13}, Player 1 can guarantee uniformly $w^*(p,\omega)$ in $\Gamma^\varphi$. 
Let $\sigma^\varphi$ be a (behavior) strategy that guarantees uniformly $w^*(p,\omega)-\varepsilon \geq v^*(p,\omega)-2\varepsilon$ in $\Gamma^\varphi$. Let $\tau$ be a strategy of Player 2 in $\Gamma$. It is rather easy to ``copy'' $\sigma^\varphi$ into a strategy in $\Gamma$: indeed, at the end of each stage in $\Gamma$, Player 1 can do the triangulation splitting step ``in her head", and play accordingly. Such an argument is made formal by a coupling similar to the one of Section \ref{sec:coupling}. 

We build recursively a process $(H_m,H^\varphi_m)$ on 
\\
$ \cup_{m \geq 1} [K \times (\Omega \times I \times J)^{m-1} \times \Omega] \times \cup_{m \geq 1} [(\Omega \times \Delta(K) \times X \times J)^{m-1} \times \Omega \times \Delta(K)]$ (finite histories in $\Gamma$, finite histories in $\Gamma^\varphi$) in the following way.
\begin{itemize}
\item
For $m=1$, $k$ is drawn from $p$, and $H_1:=(k,\omega)$ and $H^{\varphi}_1:=(p,\omega)$.
\item
For $ m \geq 2$, we define the following variables. 
\begin{itemize} 
\item
$X_m \in X$ is the (realized) mixed action prescribed by strategy $\sigma^{\varphi}$ in $\Gamma^{\varphi}$: conditional to $(H_m,H^{\varphi}_m)$,
$X_m$ has law $\sigma^{\varphi}(H^{\varphi}_m)$,
\item
$J_m \in J$ is the (realized) pure action of Player 2 in $\Gamma$. The law of $J_m$ conditional to $(H_m,H^{\varphi}_m)$ is $\tau(H_m)$,
\item
$I_m$ is the random variable representing Player 1's action, and has law $\overline{X_m}^{P_m}$: 
\begin{equation*}
\forall i \in I \quad \m{P}(I_m=i|H_m,H^{\varphi}_m,X_m):=\overline{X_m}^{P_m}(i),
\end{equation*}
\item
$\Omega_{m+1}$ is the state at stage $m+1$, and is drawn from $\rho(\Omega_m,I_m,J_m)$:
\begin{equation*}
\forall \omega \in \Omega \quad \m{P}(\Omega_{m+1}=\omega|H_m,H^{\varphi}_m,I_m,J_m):=\rho(\omega|\Omega_m,I_m,J_m)
\end{equation*} 
\item
$P_{m+1}$ is drawn from the splitting of the posterior belief ${P_m}^{X_m}(.|I_m)$: 
\begin{equation*}
\m{P}(P_{m+1}=p|H_m,H^{\varphi}_m,I_m)=S[p|{P_m}^{X_m}(.|I_m)]
\end{equation*}       
\item
The updated histories are $H_{m+1}:=(H_m,I_m,J_m,\Omega_{m+1})$ and $H^\varphi_{m+1}:=(H^\varphi_m,X_m,J_m,P_{m+1},\Omega_{m+1})$. 
\end{itemize}
\end{itemize}

This process defines implicitly a strategy of Player 1 in $\Gamma$ and a strategy of Player 2 in $\Gamma^{\varphi}$. Indeed, set
\begin{equation*}
\sigma(H_m)(i):=\m{P}(I_m=i|H_m)
\end{equation*}
\begin{equation*}
\tau^\varphi(H^\varphi_m)(j):=\m{P}(J_m=j|H^\varphi_m).
\end{equation*}
By definition, the law of $(H_m)_{m \geq 1}$ is $\m{P}_{p,\omega,\sigma,\tau}$, and the law of $(H^\varphi_m)_{m \geq 1}$ is $\m{P}^{\varphi}_{p,\omega,\sigma^\varphi,\tau^\varphi}$.
Since $\sigma^\varphi$ guarantees uniformly $v^*(p,\omega)-2\varepsilon$ in $\Gamma^\varphi(p,\omega)$, we have that $\sigma$ guarantees uniformly $v^*(p,\omega)-2\varepsilon$ in $\Gamma(p,\omega)$. 

Assume $p \notin P$. By definition of $\alpha_0$, there exists $p' \in P$ such that $\left\|p-p'\right\|_2 \leq \varepsilon$. Moreover, by the previous construction, there exists $\sigma$ a $2\varepsilon$-uniform optimal strategy in $\Gamma(p',\omega)$. Because $n$-stage payoffs are $\left\|g\right\|_\infty$-Lipschitz, $\sigma$ guarantees uniformly $v^*(p',\omega)-2\varepsilon-\varepsilon \left\|g\right\|_\infty$ in $\Gamma(p,\omega)$. Because $v^*$ is $\left\|g\right\|_\infty$-Lipschitz, 
$\sigma$ is a $(2\varepsilon+2\varepsilon \left\|g\right\|_\infty)$-uniform optimal strategy in $\Gamma(p,\omega)$, and the theorem is proved.  
\section{Perspectives} \label{sec:perspectives}
Most of the tools introduced in this paper extend far beyond the absorbing game model, and offer promising perspectives in various other frameworks. Moreover, the proof inspires the following general approach to tackle the new conjecture stated in subsection \ref{subsec:mertens}:
\begin{itemize}
\item
Given a stochastic game with signals satisfying the assumptions of the new conjecture, consider the auxiliary stochastic game with observed state, where the state space corresponds to the universal belief space \cite{MSZ},
\item
Discretize the belief space into a finite set (e.g. using the triangulation technique of this paper), to obtain another stochastic game with finite state space,
\item
Use \cite{BGV13} to obtain existence of limit value and uniform value in the latter game,
\item
Prove that values in the two stochastic games are close. 
\end{itemize}
Addressing the new conjecture in its full generality is probably too ambitious at first, hence we suggest to investigate the following models as a benchmark. 
\subsection*{Stochastic games with incomplete information on both sides} 
Recall that in the proof of Theorems \ref{main_thm} and \ref{main_thm_2}, the only place where we needed the absorbing assumption is Proposition \ref{opt_concise}. Hence, proving Proposition \ref{opt_concise} without the absorbing absorption would immediately extend Theorems \ref{main_thm} and \ref{main_thm_2} to stochastic games with incomplete information on both sides. A natural starting point would be to consider \textit{recursive games with incomplete information on both sides}, in which payoff in non-absorbing states is always 0. Note that in recursive games with incomplete information on one side, Mertens conjectures were proven true in \cite{RV00}, and this result was extended to a more general signalling structure in \cite{LV14}. 
\subsection*{Repeated games with signals}
In a repeated game with signals, the state never moves. Players do not know the state, and receive private signals at each stage. Such a model satisfies the assumptions of the new conjecture. To prove the latter, the following broad question can be considered as a first step: can the universal belief space be ``triangulated'', in the same spirit as what is done in this paper?
\\

To conclude, two other related problems are stated.
\subsection*{Uniform maxmin} \label{subsec:uniform_maxmin}
In literature, the definition of uniform maxmin often requires in addition that Player 2 should be able to \textit{defend uniformly} the maxmin, that is:
\begin{eqnarray*}
&\forall\varepsilon>0,& \forall \sigma \in \Sigma, \exists \tau \in T, \ \exists n_0 \geq 1, \forall n \geq n_0, 
\gamma_n(p,q,\omega,\sigma,\tau) \leq \maxmin(p,q,\omega)+\varepsilon 
\\
&\text{and}& \quad \gamma_\lambda(p,q,\omega,\sigma,\tau) \leq \maxmin(p,q,\omega)+\varepsilon.
\end{eqnarray*}

Such a property has been proven true in recursive games with incomplete information on one side \cite{RV00}, in stochastic games with imperfect monitoring \cite{RSV03}, and in particular classes of absorbing games with incomplete information on one side \cite{S841,S852,Li20}. The approach of this paper is a good candidate to prove the property in absorbing games with incomplete information on one side. The main difficulty is to adapt Proposition \ref{prop:opt_concise} and prove that if Player 1 can guarantee uniformly some quantity, then she can guarantee it using an $\varepsilon$-concise strategy, up to some error term that vanishes as a function of $\varepsilon$. Once this is done, the coupling argument of Section \ref{sec:coupling} can be adapted to obtain the property.
\subsection*{Complexity results}
The approach of this paper could be of great help to study the computability of the limit value in absorbing games with incomplete information on both sides. Indeed, the size of the triangulation that is needed to obtain approximation of discounted values in $\Gamma$ by discounted values in $\Gamma^f$ can be explicitly bounded. Hence, computability results about limit value in $\Gamma^f$ should immediately produce analogous results for $\Gamma$. The main difficulty is that $\Gamma^f$ is a stochastic game with compact action sets, but as we have seen in Section \ref{subsec:gamma_f}, it is also very regular. In particular, an interesting direction is to extend the technique of \cite{AOB18,OB18} to such games. 
\\
Another direction is to consider Partially Observable Markov Decision Processes (POMDPs), which correspond to 1-Player stochastic games where the player does not know the state, but receives a signal at every stage. Limit value is known to exist by \cite{RSV02}. One can define the game $\Gamma^f$ in a similar way as in this paper, and asks whether the discounted values in $\Gamma^f$ and in the original POMDP are close to each other. This could help understanding the computability of limit value in POMDPs, which remains largely uncharted.
\section*{Acknowledgments}
The author is greatly indebted to Françoise Forges and Sylvain Sorin for their careful rereading and insightful comments. 
 \bibliography{bibliogen.bib}
\end{document}